\newtheorem{theorem}{Theorem}
\theoremstyle{plain}
\newtheorem{corollary}{Corollary}
\newtheorem{definition}{Definition}
\newtheorem{lemma}{Lemma}
\newtheorem{Hypothesis}{Hypothesis}
\newtheorem{notation}{Notation}
\newtheorem{Inverse problem}{Inverse Problem}
\newtheorem{proposition}{Proposition}
\newtheorem{remark}{Remark}
\numberwithin{equation}{section}
\begin{document}
\title[]{Inverse spectral problems for non-self-adjoint Sturm-Liouville
operators with discontinuous boundary conditions}
\author{Jun Yan and Guoliang Shi}
\subjclass[2010]{Primary 34A55; Secondary 34L40, 34L20}
\address{School of Mathematics, Tianjin University, Tianjin, 300354,
People's Republic of China}
\keywords{diffusions, eigenvalues, non-self-adjoint, multiplicity}
\email{jun.yan@tju.edu.cn}
\address{School of Mathematics, Tianjin University, Tianjin, 300354,
People's Republic of China}
\email{glshi@tju.edu.cn}
\date{\today }
\keywords{Non-selfadjoint Sturm-Liouville operators, inverse problem,
eigenvalue, norming constant}

\begin{abstract}
This paper deals with the inverse spectral problem for a non-self-adjoint
Sturm-Liouville operator with discontinuous conditions inside the interval.
We obtain that if the potential $q$ is known a priori on a subinterval $%
\left[ b,\pi \right] $ with $b\in \left( d,\pi \right] $ or $b=d$, then $h,$
$\beta ,$ $\gamma \ $and $q$ on $\left[ 0,\pi \right] \ $can be uniquely
determined by partial spectral data consisting of a sequence of eigenvalues
and a subsequence of the corresponding generalized normalizing constants or
a subsequence of the pairs of eigenvalues and the corresponding generalized
ratios. For the case $b\in \left( 0,d\right) ,$ a similar statement holds if $%
\beta ,$ $\gamma \ $are also known a priori. Moreover, if $q$ satisfies a
local smoothness condition, we provide an alternative approach instead of
using the high-energy asymptotic expansion of the Weyl $m$-function to solve
the problem of missing eigenvalues and norming constants.
\end{abstract}

\maketitle

\section{Introduction}

In this paper, we consider the non-self-adjoint Sturm-Liouville operator $%
L:=L\left( q,h,H,\beta ,\gamma ,d\right) $ defined by%
\begin{equation}
\ell y:=-y^{\prime \prime }+q\left( x\right) y  \label{ly}
\end{equation}%
on the interval $\left( 0,\pi \right) \ $with the boundary conditions%
\begin{equation}
U\left( y\right) :=y^{\prime }\left( 0\right) -hy\left( 0\right) =0,V\left(
y\right) :=y^{\prime }\left( \pi \right) +Hy\left( \pi \right) =0
\label{BCs}
\end{equation}%
and the discontinuous conditions
\begin{equation}
y\left( d+0\right) =\beta y\left( d-0\right) \text{, }y^{\prime }\left(
d+0\right) =\beta ^{-1}y^{\prime }\left( d-0\right) +\gamma y\left(
d-0\right) \text{,}  \label{DCs}
\end{equation}%
where\ $q\in L_{%
\mathbb{C}
}^{1}\left[ 0,\pi \right] $ is complex-valued, $h,$ $H\in
\mathbb{C}
\cup \left\{ \infty \right\} ,$ {$\gamma \in
\mathbb{C}
$ }and {{$\beta \in
\mathbb{R}
$, $\beta >0$}}. Note that, in an obvious notation, $h=\infty $ and $%
H=\infty $ single out the Dirichlet boundary conditions%
\begin{equation*}
U^{\infty }\left( y\right) :=y\left( 0\right) =0\text{ and }V^{\infty
}\left( y\right) :=y\left( \pi \right) =0\text{,}
\end{equation*}%
respectively. One notes that in the special case $\beta =1,$ $\gamma =0,$
the operator $L$ reduces to the classical Sturm-Liouville operator without
discontinuities.

Sturm-Liouville operators with discontinuities inside the interval arise in
mathematics, mechanics, geophysics, and other fields of science and
technology. The inverse spectral problems of such operators is of central
importance in disciplines ranging from engineering to the geosciences. For
example, discontinuous inverse problems appear in electronics for
constructing parameters of heterogeneous electronic lines with desirable
technical characteristics \cite{1,2}. In\ the last decades, inverse spectral
problems for Sturm-Liouville operators with different type discontinuities
have attracted tremendous interest \cite%
{amirov,wei1,hald,mmd,nk,ozkan,ok,shien,wyp,wang,xu,ycf,Yurko,YGMA,YIXUAN,TESCHL}%
. These start with the fundamental work given by V. Ambarzumian \cite{a} and
then by G. Borg \cite{borg}, B. Levitan \cite{levitan1,levitan2}, and V.
Marchenko \cite{marchen,vam} for the classical Sturm-Liouville operators.%

We emphasize that in 1984, O. H. Hald \cite{hald} first generalized
Hochstadt--Lieberman's theorem \cite{lie} to the Sturm--Liouville operator $%
L $, that is, if $H$ is given, $q$ is known on $\left[ \frac{\pi }{2},\pi %
\right] $ and $d\in \left( 0,\frac{\pi }{2}\right) $, then one spectra can
uniquely determine $h,\beta ,\gamma ,d$ and $q$ on $\left[ 0,\pi \right] .$
Motivated by this work, increasing attention has been given to the inverse
spectral problem of recovering the operator $L$ in the self-adjoint case
with partial information given on the potential \cite{shien,xu,ycf}$.$ In
contrast, such inverse spectral problem for the non-self-adjoint case has in
general been studied considerably less, and it is precisely the starting
point of this paper.$\ $We investigate the uniqueness problem of determining
the non-self-adjoint operator $L$ with only partial information of $q,$ of
the eigenvalues, and of the generalized norming constants. What should be
noted is that in the non-self-adjoint setting, complex eigenvalues and
multiple eigenvalues may appear, and thus many new ideas and additional
effort are required. Before describing the content of this paper, let us
first give some notations and basic facts.

To avoid too many case distinctions in the proofs of this paper, we assume
that $h\in
\mathbb{C}
$. Nevertheless, we expect that the method of the paper can be applied in
the case $h=\infty $. For simplicity we use the notations $B$ and $B^{\infty
}$ for the boundary value problems corresponding to $L$ with $H\in
\mathbb{C}
\ $and $H=\infty ,$ respectively. Assume that $\varphi \left( x,\lambda
\right) $, $\psi \left( x,\lambda \right) ,$ $\psi ^{\infty }\left(
x,\lambda \right) $ are solutions of the equation
\begin{equation}
\ell y=-y^{\prime \prime }+q\left( x\right) y=\lambda y  \label{ly1}
\end{equation}%
satisfying the discontinuous conditions (\ref{DCs}) and the initial
conditions
\begin{eqnarray*}
\varphi \left( 0,\lambda \right) &=&1,\left. \frac{d\varphi \left( x,\lambda
\right) }{dx}\right\vert _{x=0}=h\in
\mathbb{C}
, \\
\psi \left( \pi ,\lambda \right) &=&1,\left. \frac{d\psi \left( x,\lambda
\right) }{dx}\right\vert _{x=\pi }=-H\in
\mathbb{C}
, \\
\psi ^{\infty }\left( \pi ,\lambda \right) &=&0,\text{ }\left. \frac{d\psi
^{\infty }\left( x,\lambda \right) }{dx}\right\vert _{x=\pi }=1,
\end{eqnarray*}%
respectively. Then it is easy to see that eigenvalues of $B$ and $B^{\infty
} $ are precisely the zeros of
\begin{equation}
\Delta \left( \lambda \right) :=\left\langle \psi \left( x,\lambda \right)
,\varphi \left( x,\lambda \right) \right\rangle =V\left( \varphi \right)
=-U\left( \psi \right)  \label{w}
\end{equation}%
and%
\begin{equation}
\Delta ^{\infty }\left( \lambda \right) :=\left\langle \psi ^{\infty }\left(
x,\lambda \right) ,\varphi \left( x,\lambda \right) \right\rangle
=-V^{\infty }\left( \varphi \right) =-U\left( \psi ^{\infty }\right) ,
\label{w2}
\end{equation}%
respectively, where $\left\langle y(x),z(x)\right\rangle :=y(x)z^{\prime
}(x)-y^{\prime }(x)z(x)$. Thus $\Delta \left( \lambda \right) $ and $\Delta
^{\infty }\left( \lambda \right) $ are called the characteristic functions
of $B$ and $B^{\infty },$ respectively. Throughout this paper, the \textbf{%
algebraic multiplicity} of an eigenvalue is the order of it as a zero of the
corresponding characteristic function. %
\

\begin{notation}
\label{615 copy(1)}$(1)$ We denote by $\sigma \left( B\right) :=\left\{
\lambda _{n}\right\} _{n\in
\mathbb{N}
_{0}}$ the sequence of all the eigenvalues of $B$ and denote by $\sigma
\left( B^{\infty }\right) :=\left\{ \lambda _{n}^{\infty }\right\} _{n\in
\mathbb{N}
_{0}}$ the sequence of all the eigenvalues of $B^{\infty }$. The eigenvalues
are assumed to be repeated according to their algebraic multiplicities and
labeled in order of increasing moduli. In addition, identical eigenvalues
are adjacent.

$(2)$ Denote%
\begin{equation*}
S_{B}:=\left\{ n\in
\mathbb{N}
|\lambda _{n-1}\neq \lambda _{n}\right\} \cup \left\{ 0\right\}
,S_{B^{\infty }}:=\left\{ n\in
\mathbb{N}
|\lambda _{n-1}^{\infty }\neq \lambda _{n}^{\infty }\right\} \cup \left\{
0\right\} .
\end{equation*}

$(3)$ The symbol $m_{n}$ denotes the algebraic multiplicity of the
eigenvalue $\lambda _{n},n\in S_{B},$ and $m_{n}^{\infty }$\ denotes the
algebraic multiplicity of $\lambda _{n}^{\infty },$ $n\in S_{B^{\infty }}.$
For sufficiently large $n$ it is well known that $m_{n}^{\infty }=m_{n}=1$ (see
Lemma\ 2.3 in \cite{YIXUAN}).
\end{notation}

Now we turn to give the definition of the \textit{generalized norming
constants }for the problem $B$. Denote%
\begin{equation}
\kappa _{n+\nu }:=\varphi _{n+\nu }\left( \pi \right) ,\text{ }\alpha
_{n+\nu }:=\int_{0}^{\pi }\psi _{n+\nu }\left( x\right) \psi
_{n+m_{n}-1}\left( x\right) dx\text{,}  \label{ratios}
\end{equation}%
where $n\in S_{B}$, $\nu =0,1,\ldots ,m_{n}-1$, and
\begin{eqnarray}
&&\varphi _{n+\nu }\left( x\right):=\varphi _{\nu }\left( x,\lambda
_{n}\right) :=\dfrac{1}{\nu !}\left. \dfrac{d^{\nu }}{d\lambda ^{\nu }}%
\varphi \left( x,\lambda \right) \right\vert _{\lambda =\lambda _{n}}\text{,
}  \label{9} \\
&&\psi _{n+\nu }\left( x\right):=\psi _{\nu }\left( x,\lambda _{n}\right) :=%
\dfrac{1}{\nu !}\left. \dfrac{d^{\nu }}{d\lambda ^{\nu }}\psi \left(
x,\lambda \right) \right\vert _{\lambda =\lambda _{n}}.
\end{eqnarray}%
Then $\kappa _{n}$ and $\alpha _{n}$, $n\in
\mathbb{N}
_{0},$ are called the \textit{generalized} \textit{norming constants}
corresponding to $\lambda _{n}.$ To distinguish $\kappa _{n}$ and $\alpha
_{n},$ in this paper, $\kappa _{n}$ is called the \textit{generalized}
\textit{ratio}, and $\alpha _{n}$ is called the \textit{generalized
normalizing constant}. Moreover, it follows from \cite[Theorem 4.1]{YIXUAN}
that for $n\in S_{B}$, $\nu =0,1,\ldots ,m_{n}-1$,

\begin{equation}
\left. \frac{d^{m_{n}+\nu }\Delta \left( \lambda \right) }{d\lambda
^{m_{n}+\nu }}\right\vert _{\lambda =\lambda _{n}}=-\left( m_{n}+\nu \right)
!\sum_{j=0}^{\nu }\kappa _{n+j}\alpha _{n+\nu -j}\text{.}  \label{24}
\end{equation}%
Note that when the multiplicity $m_{n}=1$, the generalized \textit{norming
constants} $\kappa _{n}$ and $\alpha _{n}$ coincide with the \textit{norming
constants} for the operator $L$ in the self-adjoint case (see \cite{Yurko}).

Actually, $\varphi _{\nu }\left( x,\lambda _{n}\right) $ and $\psi _{\nu
}\left( x,\lambda _{n}\right) $ are the generalized eigenfunctions of $B$
corresponding to the eigenvalue $\lambda _{n},$ $n\in S_{B}.$ In fact, for $%
\nu =1,2,\ldots ,m_{n}-1,$ we notice that%
\begin{equation}
\left\{
\begin{array}{l}
\ell \varphi _{\nu }\left( x,\lambda _{n}\right) =\lambda _{n}\varphi _{\nu
}\left( x,\lambda _{n}\right) +\varphi _{\nu -1}\left( x,\lambda _{n}\right)
\text{, } \\
\varphi _{\nu }\left( d+0,\lambda _{n}\right) =\beta \varphi _{\nu }\left(
d-0,\lambda _{n}\right) \text{, } \\
\varphi _{\nu }^{\prime }\left( d+0,\lambda _{n}\right) =\beta ^{-1}\varphi
_{\nu }^{\prime }\left( d-0,\lambda _{n}\right) +\gamma \varphi _{\nu
}\left( d-0,\lambda _{n}\right) \text{, } \\
\varphi _{\nu }\left( 0,\lambda _{n}\right) =\varphi _{\nu }^{\prime }\left(
0,\lambda _{n}\right) =0\text{,}%
\end{array}%
\right.  \label{7}
\end{equation}%
\begin{equation}
\left\{
\begin{array}{l}
\ell \psi _{\nu }\left( x,\lambda _{n}\right) =\lambda _{n}\psi _{\nu
}\left( x,\lambda _{n}\right) +\psi _{\nu -1}\left( x,\lambda _{n}\right)
\text{,} \\
\psi _{\nu }\left( d+0,\lambda _{n}\right) =\beta \psi _{\nu }\left(
d-0,\lambda _{n}\right) \text{, } \\
\psi _{\nu }^{\prime }\left( d+0,\lambda _{n}\right) =\beta ^{-1}\psi _{\nu
}^{\prime }\left( d-0,\lambda _{n}\right) +\gamma \psi _{\nu }\left(
d-0,\lambda _{n}\right) \text{, } \\
\psi _{\nu }\left( \pi ,\lambda _{n}\right) =\psi _{\nu }^{\prime }\left(
\pi ,\lambda _{n}\right) =0\text{.}%
\end{array}%
\right.  \label{8}
\end{equation}%
and
\begin{eqnarray}
\frac{1}{\nu !}\Delta ^{\left( \nu \right) }\left( \lambda _{n}\right)
&=&\varphi _{\nu }^{\prime }\left( \pi ,\lambda _{n}\right) +H\varphi _{\nu
}\left( \pi ,\lambda _{n}\right) =0,\text{ }  \label{3f} \\
\frac{1}{\nu !}\Delta ^{\left( \nu \right) }\left( \lambda _{n}\right)
&=&-\psi _{\nu }^{\prime }\left( 0,\lambda _{n}\right) +h\psi _{\nu }\left(
0,\lambda _{n}\right) =0\text{.}
\end{eqnarray}

\begin{remark}
\label{615}Now we define the generalized norming constants for the problem $%
B^{\infty },$
\begin{equation}
\kappa _{n+\nu }^{\infty }:=\left. \frac{d\varphi _{n+\nu }^{\infty }\left(
x\right) }{dx}\right\vert _{x=\pi },\text{ }\alpha _{n+\nu }^{\infty
}:=\int_{0}^{\pi }\psi _{n+\nu }^{\infty }\left( x\right) \psi
_{n+m_{n}^{\infty }-1}^{\infty }\left( x\right) dx\text{,}  \label{64}
\end{equation}%
where $n\in S_{B^{\infty }}$, $\nu =0,1,\ldots ,m_{n}^{\infty }-1,$ and
\begin{eqnarray}
&&\varphi _{n+\nu }^{\infty }\left( x\right):=\varphi _{\nu }\left(
x,\lambda _{n}^{\infty }\right) :=\dfrac{1}{\nu !}\left. \dfrac{d^{\nu }}{%
d\lambda ^{\nu }}\varphi \left( x,\lambda \right) \right\vert _{\lambda
=\lambda _{n}^{\infty }},\text{ }  \label{61} \\
&&\psi _{n+\nu }^{\infty }\left( x\right):=\psi _{\nu }^{\infty }\left(
x,\lambda _{n}^{\infty }\right) :=\dfrac{1}{\nu !}\left. \dfrac{d^{\nu }}{%
d\lambda ^{\nu }}\psi ^{\infty }\left( x,\lambda \right) \right\vert
_{\lambda =\lambda _{n}^{\infty }}.
\end{eqnarray}%
Then one can also deduce that for $n\in S_{B^{\infty }}$, $\nu =0,1,\ldots
,m_{n}^{\infty }-1,$
\begin{equation}
\left. \frac{d^{m_{n}^{\infty }+\nu }\Delta ^{\infty }\left( \lambda \right)
}{d\lambda ^{m_{n}^{\infty }+\nu }}\right\vert _{\lambda =\lambda
_{n}^{\infty }}=-\left( m_{n}^{\infty }+\nu \right) !\sum_{j=0}^{\nu }\kappa
_{n+j}^{\infty }\alpha _{n+\nu -j}^{\infty }\ .  \label{63}
\end{equation}
\end{remark}

In \cite{YIXUAN}, Y. Liu, G. Shi and J. Yan studied the uniqueness spectral
problem of recovering the non-self-adjoint operator $L\ $from one of the
following spectral characteristics: (1) $\Gamma _{1}:=\left\{ \lambda
_{n},\alpha _{n}\right\} _{n\in
\mathbb{N}
_{0}};$ (2) $\Gamma _{2}:=\left\{ \lambda _{n},\lambda _{n}^{\infty
}\right\} _{n\in
\mathbb{N}
_{0}};$ (3) the Weyl function $M\left( \lambda \right) :=\frac{\Delta
^{\infty }\left( \lambda \right) }{\Delta \left( \lambda \right) }.$
This motivates us to investigate the inverse spectral problem with partial
information given on the potential. More precisely, assume that $q$ is known
on $\left[ b,\pi \right] $ for some constant $b\in \left( 0,\pi \right] ,$
then the uniqueness theorems of this paper will be given in three cases: $%
b\in \left( d,\pi \right] ,$ $b=d,$ $b\in \left( 0,d\right) ,$ where $d$ is
the discontinuous point. In the case of $b\in \left( d,\pi \right] $ or $b=d$%
, we show that $h,$ $\beta ,$ $\gamma \ $and $q$ on $\left[ 0,\pi \right] \ $%
can be uniquely determined by partial information of the eigenvalues $%
\lambda _{n},$ $\lambda _{n}^{\infty }$, and of the generalized normalizing
constants $\alpha _{n},$ $\alpha _{n}^{\infty }$; the uniqueness problem is
also considered under the same circumstances but with the normalizing
constants $\alpha _{n},$ $\alpha _{n}^{\infty }$ replaced by ratios $\kappa
_{n},$ $\kappa _{n}^{\infty }.$ Moreover, for the case $b\in \left(
0,d\right) ,$ similar uniqueness results can be established with the
additional condition that $\beta ,$ $\gamma \ $are known a priori.

We mention that in 1999, F. Gesztesy and B. Simon \cite{ges2} considered the
classical self-adjoint Sturm-Liouville operators and presented a
generalization of Hochstadt--Lieberman theorem to the case where the
potential $q$ is known on a larger interval $\left[ a,\pi \right] $ with $%
a\in \left( 0,\frac{\pi }{2}\right] $ and the set of common eigenvalues is
sufficiently large. Later, G. Wei, H. K. Xu and Z. Wei \cite{wei,zhaoying}
provided some uniqueness results for classical self-adjoint Sturm-Liouville
operators with only partial information on $q,$ on the eigenvalues, and on
the norming constants. While our results are generalizations of the
uniqueness theorems established in \cite{ges2,wei,zhaoying},
the non-self-adjointness and the presence of discontinuities produce
essential qualitative modifications in the investigation of the operator $L$%
. To the best of our knowledge, the uniqueness theorems obtained in this
paper have not yet been developed even for the non-self-adjoint classical
Sturm-Liouville operators (i.e., the case of $\beta =1,$ $\gamma =0$) and
the self-adjoint Sturm-Liouville operators with discontinuous conditions
inside (i.e., the real-valued case).

In addition, we show that less knowledge of eigenvalues and norming
constants can be required if the potential $q$ satisfies a local smoothness
condition, which is a generalization of the results in \cite%
{ges2,wei,zhaoying}. We notice that the key technique in \cite%
{ges2,wei,zhaoying} relies on the high-energy asymptotic expansion of the
Weyl $m$-function \cite{weyl}, however, in our non-self-adjoint situation,
an entirely different approach, based on the asymptotic expansion of the
fundamental solutions of the equation $\left( \ref{ly1}\right) ,$ is
developed (see Proposition \ref{ooo copy(1)}). Now we briefly present some
of these uniqueness results (Theorem \ref{theorem}, Theorem \ref{theorem
copy(4)}, Remark $\ref{theorem copy(6)},$ Corollary \ref{corollary copy(1)}--%
\ref{corollary copy(3)}) as follows.

(S1) We prove that if $q\ $is assumed to be $C^{m}$ near $\pi ,\ $then $h,$ $%
\beta ,$ $\gamma $ and $q$ on $\left[ 0,\pi \right] $ can be uniquely
determined by the values of $q^{\left( j\right) }\left( \pi \right) ,$ $%
j=0,1,\ldots ,m,$ $\left\{ \lambda _{n}\right\} _{n\in
\mathbb{N}
_{0}\backslash \Lambda _{1}}$ $($a subsequence of $\sigma \left( B\right) ),$
and $\left\{ \lambda _{n}^{\infty }\right\} _{n\in
\mathbb{N}
_{0}\backslash \Lambda _{1}^{\infty }}$ $($a subsequence of $\sigma \left(
B^{\infty }\right) ),$ where $\#\Lambda _{1}+\#\Lambda _{1}^{\infty }=$ $%
\left[ \frac{m+2}{2}\right] .$

(S2) When $d\in \left( 0,\frac{\pi }{2}\right) ,\ $we prove that if $q$ is $%
C^{m}$ near $\frac{\pi }{2}\ $and $q\ $on $\left[ \frac{\pi }{2},\pi \right]
\ $is known a priori, then $h,$ $\beta ,$ $\gamma $ and $q$ on $\left[ 0,\pi %
\right] $ can be uniquely determined by all the eigenvalues $\left\{ \lambda
_{n}\right\} _{n\in
\mathbb{N}
_{0}}$ of $B$ except for $\left( \left[ \frac{m+2}{2}\right] \right) ,$ or
all the eigenvalues $\left\{ \lambda _{n}^{\infty }\right\} _{n\in
\mathbb{N}
_{0}}$ of $B^{\infty }$ except for $\left( \left[ \frac{m+1}{2}\right]
\right) ;$ when $d=\frac{\pi }{2},$ the same statement holds if $\beta ,$ $%
\gamma \ $are additionally assumed to be known a priori$.$

Here is a sketch of the contents of this paper. In Section 2, we provide
some preliminary lemmas which will be used to prove the main results. In
Section 3, assume that $q$ is known on $\left[ b,\pi \right] $ for some
constant $b\in \left( 0,\pi \right] ,$ then we discuss the uniqueness
theorems for three cases: $b\in \left( d,\pi \right] $, $b=d$, and $b\in
\left( 0,d\right) .$ Finally, the appendix is devoted to present an
important proposition $\left( \text{see Proposition \ref{ooo copy(1)}}%
\right) ,$ which is necessary to prove our principal results.

We conclude this introduction by briefly summarizing some of the notations
used in this paper.

\begin{notation}
\label{615 copy(2)}$%
\mathbb{C}
$ denotes the complex plane$.$ $%
\mathbb{N}
$ denotes the set of positive integers$\ $and $%
\mathbb{N}
_{0}$ denotes the set of nonnegative integers$.$ Given a set $A,$ the symbol
$\#A$ will be used to denote the number of elements in $A.$ Moreover, given
a sequence $X:=\{x_{n}\}_{n=0}^{\infty }$ of complex numbers, we use the
notation $X_{1}<<X\ $to denote that $X_{1}$ is a subsequence of $X,$ and in
addition, $\widehat{X}$ $:=\underset{n\in
\mathbb{N}
_{0}}{\cup }\{x_{n}\},$ $N_{X}\left( t\right) :=\#\{n\in
\mathbb{N}
_{0}:\left\vert x_{n}\right\vert <t\}\ $for each $t\geq 0.$
\end{notation}

\section{Preliminaries}

In this section, we provide some preliminaries which will be used in Section
3 to prove the main results.

In order to prove the uniqueness theorems, together with $B$ $\left(
B^{\infty }\right) ,$ we consider the boundary value problem $\widetilde{B}$
$\left( \widetilde{B}^{\infty }\right) $ of the same form but with different
coefficients $\tilde{q},$ $\tilde{h},$ $\widetilde{H},$ $\widetilde{\beta },$
$\widetilde{\gamma }$ and $\widetilde{d}.$ We agree that if a certain symbol
$\xi $ denotes an object related to $B$ or $B^{\infty }$, then $\tilde{\xi}$
will denote the analogous object related to $\tilde{B}$ or $\widetilde{B}%
^{\infty }$, and $\hat{\xi}:=\xi -\tilde{\xi}$.

Now we introduce an entire function of $\lambda \in
\mathbb{C}
,$
\begin{equation}
F\left( \lambda \right) :=\left. \left\langle \varphi \left( x,\lambda
\right) ,\widetilde{\varphi }\left( x,\lambda \right) \right\rangle
\right\vert _{x=\pi }.  \label{FFF}
\end{equation}%
From \textrm{\cite[Theorem 5.2 and Remark 1]{YIXUAN}, }the following result
can be given.

\begin{lemma}
\label{unique by F}Suppose that $F\left( \lambda \right) \equiv 0$, then $q$
$=\tilde{q}$ a.e. on $\left[ 0,\pi \right] ,$ $h=\tilde{h},$ $\beta =%
\widetilde{\beta },$ $\gamma =\widetilde{\gamma },$ $d=\widetilde{d}.$
\end{lemma}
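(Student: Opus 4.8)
The plan is to work with the $x$-dependent bracket $W(x,\lambda):=\left\langle \varphi(x,\lambda),\widetilde{\varphi}(x,\lambda)\right\rangle$, whose value at $x=\pi$ is exactly the hypothesis function, so that $W(\pi,\lambda)=F(\lambda)\equiv 0$. Differentiating in $x$ and using $\varphi''=(q-\lambda)\varphi$ and $\widetilde{\varphi}''=(\widetilde{q}-\lambda)\widetilde{\varphi}$, one gets on every subinterval free of transmission points the identity $W'(x,\lambda)=(\widetilde{q}(x)-q(x))\varphi(x,\lambda)\widetilde{\varphi}(x,\lambda)$, while the initial conditions at $x=0$ give $W(0,\lambda)=\widetilde{h}-h=-\widehat{h}$, a constant in $\lambda$. (Note $\varphi$ never sees $H$, which is why the statement does not assert $H=\widetilde{H}$.) Thus the whole argument reduces to reading off $d,\beta,\gamma,h$ and $q$ from the requirement that integrating $W'$ from $0$ to $\pi$ and adding the jump(s) produced at the transmission point(s) must always return $0$.

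First I would pin down the discontinuity data $d$ and $\beta$. Invoking the standard high-energy asymptotics of $\varphi(\pi,\lambda)$ and $\varphi'(\pi,\lambda)$ for $\lambda=\rho^2$, $|\rho|\to\infty$ (which carry the two frequencies $\rho\pi$ and $\rho(\pi-2d)$ with coefficients $\beta^{+}=\tfrac12(\beta+\beta^{-1})$ and $\beta^{-}=\tfrac12(\beta-\beta^{-1})$, and likewise for $\widetilde{\varphi}$ with $\widetilde{d},\widetilde{\beta}$), and substituting into $F$, the leading term of $F$ is of order $\rho$. Since $F\equiv0$, this order-$\rho$ term must vanish identically in $\rho$; expanding the trigonometric expression by product-to-sum formulas and matching the coefficients of the resulting exponentials in $\rho$ forces $d=\widetilde{d}$ and $\beta^{-}/\beta^{+}=\widetilde{\beta}^{-}/\widetilde{\beta}^{+}$, i.e. $\beta=\widetilde{\beta}$ (here I use $\beta,\widetilde{\beta}>0$, so $\beta^{+}>0$).

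With $d=\widetilde{d}=:d_{0}$ and $\beta=\widetilde{\beta}$ in hand, the transmission conditions for $\varphi$ and $\widetilde{\varphi}$ differ only through $\gamma$ versus $\widetilde{\gamma}$, and a short computation using (\ref{DCs}) gives the jump $W(d_{0}+0,\lambda)-W(d_{0}-0,\lambda)=-\beta\,\widehat{\gamma}\,\varphi(d_{0},\lambda)\widetilde{\varphi}(d_{0},\lambda)$. Integrating $W'$ over $[0,d_{0}]$ and $[d_{0},\pi]$ and using $W(\pi,\lambda)=F(\lambda)=0$ together with $W(0,\lambda)=-\widehat{h}$ then yields the single identity, valid for all $\lambda\in\mathbb{C}$,
\[
\widehat{h}+\beta\,\widehat{\gamma}\,\varphi(d_{0},\lambda)\widetilde{\varphi}(d_{0},\lambda)=\int_{0}^{\pi}(\widetilde{q}(x)-q(x))\,\varphi(x,\lambda)\widetilde{\varphi}(x,\lambda)\,dx .
\]
This converts the spectral hypothesis into one functional equation linking the remaining unknowns $h,\gamma$ and $q-\widetilde{q}$.

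Finally I would let $|\lambda|\to\infty$ and match the $\rho$-frequency content of the two sides. Modulo terms decaying along suitable rays, both sides are trigonometric functions of $\rho$: the genuinely $x$-varying frequencies $2\rho x$ on the right encode, through the leading products $\varphi\widetilde{\varphi}\sim\cos^{2}$-type expansions (which no longer depend on $q$ once $d_{0},\beta$ are fixed), the Fourier/Laplace transform of $\widetilde{q}-q$, so that uniqueness of the transform forces $q=\widetilde{q}$ a.e.; the remaining finitely many exceptional components, the constant one and the one at frequency $2\rho d_{0}$, then isolate $\widehat{h}$ and $\widehat{\gamma}$ and force $\widehat{h}=\widehat{\gamma}=0$. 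The main obstacle is precisely this last, non-self-adjoint completeness step together with the bookkeeping at $d_{0}$: because $q$ is complex-valued one cannot appeal to an orthonormal eigenbasis or to a real spectrum, so the vanishing of the transform must be argued purely through entire-function and Paley--Wiener techniques, and the transmission contribution, sitting at the single frequency tied to $d_{0}$, has to be separated cleanly from the continuum of frequencies coming from $\widetilde{q}-q$.
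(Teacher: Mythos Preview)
The paper does not actually prove this lemma; it simply cites \cite[Theorem~5.2 and Remark~1]{YIXUAN}. Your proposal therefore cannot be compared to the paper's proof, but it can be assessed on its own.

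Your outline is the standard and correct architecture: write $F$ as the boundary value of the Wronskian $W(x,\lambda)=\langle\varphi,\widetilde{\varphi}\rangle$, exploit $W'=(\widetilde q-q)\varphi\widetilde{\varphi}$ on smooth pieces, read off the jump at the transmission point, and finish by a large-$|\lambda|$ frequency analysis of the resulting integral identity. The jump formula and the displayed identity linking $\widehat h$, $\widehat\gamma$ and $\widehat q$ are correct as stated.

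There is, however, a genuine gap in your first step. From the leading term of $F$ one obtains, with $\rho=\sqrt\lambda$,
\[
b_2\widetilde b_1\sin(2\rho d)\;-\;b_1\widetilde b_2\sin(2\rho\widetilde d)\;+\;b_2\widetilde b_2\sin\bigl(2\rho(d-\widetilde d)\bigr)\equiv 0.
\]
If $d\neq\widetilde d$, linear independence of the three sines forces $b_2=\widetilde b_2=0$, i.e.\ $\beta=\widetilde\beta=1$, \emph{but does not force $d=\widetilde d$}. In that degenerate case the discontinuity is carried entirely by $\gamma$ (respectively $\widetilde\gamma$), and you must descend to the next order in the asymptotics---where $\gamma$ first appears---to recover the location of the jump. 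Your write-up asserts that the order-$\rho$ term already yields $d=\widetilde d$, which is not true without the side hypothesis $\beta\neq 1$ or $\widetilde\beta\neq 1$; you should either add that case distinction or acknowledge that the $\beta=\widetilde\beta=1$ branch is handled one order down.

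For the final step, your diagnosis is right: with complex $q$ one cannot invoke a self-adjoint eigenbasis, and the vanishing of the ``transform'' of $\widehat q$ must come from entire-function/Paley--Wiener reasoning together with the transformation-operator (Marchenko) representation of $\varphi\widetilde\varphi$. This is exactly the content of \cite[Theorem~5.2]{YIXUAN} that the paper is invoking, so what you are proposing is essentially to reproduce that argument rather than cite it.
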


It should be noted that our main results are based on Lemma \ref{unique by F}%
. Next, we give an important lemma, which plays a key role in this paper.

\begin{lemma}
\label{F}Suppose that $H=\widetilde{H}\in
\mathbb{C}
\cup \left\{ \infty \right\} .$ If $\lambda _{n}=\widetilde{\lambda }_{%
\widetilde{n}}$ for some $n\in S_{B},$ $\widetilde{n}\in S_{\widetilde{B}},$
and $m_{n}=\widetilde{m}_{\widetilde{n}},$ then
\begin{equation}
\left. \frac{d^{k}}{d\lambda ^{k}}F\left( \lambda \right) \right\vert
_{\lambda =\lambda _{n}}=0\ \text{for }k=0,1,\ldots ,m_{n}-1;
\label{d equal}
\end{equation}%
In addition, if $\alpha _{n+\nu }=\widetilde{\alpha }_{\widetilde{n}+\nu },$
$\nu =0,1,\ldots ,k_{n}-1$, where $k_{n}$ is an integer such that $1\leq
k_{n}\leq m_{n},$ then we have
\begin{equation*}
\left. \frac{d^{m_{n}+\nu }}{d\lambda ^{m_{n}+\nu }}F\left( \lambda \right)
\right\vert _{\lambda =\lambda _{n}}=0\ \text{for }\nu =0,1,\ldots ,k_{n}-1,
\end{equation*}%
that is, in this case, the order of $\lambda _{n}$ $($as a zero of $F\left(
\lambda \right) )$ is at least $\left( m_{n}+k_{n}\right) .$ Similar
statement also holds for the case $H=\widetilde{H}=\infty .$
\end{lemma}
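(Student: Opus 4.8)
The plan is to reduce the Wronskian-type function $F$ to a combination of the two characteristic functions and then read off the order of the zero from a Taylor expansion at $\lambda_n$. First I would establish the reduction identity
\begin{equation*}
F(\lambda)=\varphi(\pi,\lambda)\,\widetilde{\Delta}(\lambda)-\Delta(\lambda)\,\widetilde{\varphi}(\pi,\lambda).
\end{equation*}
This follows by writing $F(\lambda)=\varphi(\pi,\lambda)\widetilde{\varphi}^{\prime}(\pi,\lambda)-\varphi^{\prime}(\pi,\lambda)\widetilde{\varphi}(\pi,\lambda)$, substituting $\varphi^{\prime}(\pi,\lambda)=\Delta(\lambda)-H\varphi(\pi,\lambda)$ and $\widetilde{\varphi}^{\prime}(\pi,\lambda)=\widetilde{\Delta}(\lambda)-\widetilde{H}\widetilde{\varphi}(\pi,\lambda)$ (from $\Delta=V(\varphi)$, the corresponding relation for $\widetilde{\Delta}$, together with \eqref{w} and \eqref{BCs}), and then using $H=\widetilde{H}$ to cancel the two resulting $H\varphi(\pi)\widetilde{\varphi}(\pi)$ terms. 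Since $\lambda_n$ is a zero of $\Delta$ of order $m_n$ and, because $\widetilde{\lambda}_{\widetilde{n}}=\lambda_n$ and $\widetilde{m}_{\widetilde{n}}=m_n$, also a zero of $\widetilde{\Delta}$ of order $m_n$, while $\varphi(\pi,\cdot)$ and $\widetilde{\varphi}(\pi,\cdot)$ are entire, both summands vanish to order at least $m_n$ at $\lambda_n$; this gives \eqref{d equal} at once.

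For the refined statement I would expand in powers of $(\lambda-\lambda_n)$. By \eqref{9} and \eqref{ratios}, for $\nu\le m_n-1$ the $\nu$-th Taylor coefficient of $\varphi(\pi,\cdot)$ at $\lambda_n$ is $\kappa_{n+\nu}$, and by \eqref{24} the coefficient of $(\lambda-\lambda_n)^{m_n+i}$ in $\Delta$ is $-\sum_{j=0}^{i}\kappa_{n+j}\alpha_{n+i-j}$; the analogous identities hold for the tilde objects. Inserting these into the reduction identity, the coefficient of $(\lambda-\lambda_n)^{m_n+\mu}$ in $F$ turns into a double convolution. Introducing the truncated generating polynomials $K(t)=\sum_{\nu=0}^{k_n-1}\kappa_{n+\nu}t^{\nu}$, $A(t)=\sum_{\nu=0}^{k_n-1}\alpha_{n+\nu}t^{\nu}$ and their tilde analogues — all indices remaining in the admissible range $\nu\le m_n-1$, since $\mu\le k_n-1\le m_n-1$ — this double convolution collapses to
\begin{equation*}
\frac{F^{(m_n+\mu)}(\lambda_n)}{(m_n+\mu)!}=\bigl[K\,\widetilde{K}\,(A-\widetilde{A})\bigr]_{\mu},\qquad \mu=0,1,\ldots,k_n-1,
\end{equation*}
where $[\,\cdot\,]_{\mu}$ denotes the coefficient of $t^{\mu}$. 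The hypothesis $\alpha_{n+\nu}=\widetilde{\alpha}_{\widetilde{n}+\nu}$ for $\nu=0,\ldots,k_n-1$ gives $A\equiv\widetilde{A}$, so the right-hand side is zero for every such $\mu$, proving that the order of $\lambda_n$ as a zero of $F$ is at least $m_n+k_n$.

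The case $H=\widetilde{H}=\infty$ is entirely parallel: using $\varphi(\pi,\lambda)=-\Delta^{\infty}(\lambda)$ from \eqref{w2} one rewrites $F(\lambda)=\varphi^{\prime}(\pi,\lambda)\widetilde{\Delta}^{\infty}(\lambda)-\Delta^{\infty}(\lambda)\widetilde{\varphi}^{\prime}(\pi,\lambda)$, and then repeats the argument with $\Delta,\kappa,\alpha$ replaced by $\Delta^{\infty},\kappa^{\infty},\alpha^{\infty}$ via \eqref{63}, the derivative $\varphi^{\prime}(\pi,\cdot)$ now playing the role of $\varphi(\pi,\cdot)$. I expect the main obstacle to be the bookkeeping in the second part: verifying that the product of the two series supplied by \eqref{24} really telescopes into the single factor $A-\widetilde{A}$, and making sure that every index invoked stays within the range $\le m_n-1$ on which \eqref{24} and the definitions of the generalized norming constants are valid.
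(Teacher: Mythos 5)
Your proof is correct and follows essentially the same route as the paper: both start from the reduction $F(\lambda)=\varphi(\pi,\lambda)\widetilde{\Delta}(\lambda)-\Delta(\lambda)\widetilde{\varphi}(\pi,\lambda)$ (the determinant $(\ref{FLAMUDA})$), obtain $(\ref{d equal})$ from $m_{n}=\widetilde{m}_{\widetilde{n}}$, and feed $(\ref{24})$ (resp. $(\ref{63})$ for $H=\infty$) into the higher-order Taylor coefficients. The only divergence is organizational: where you factor the resulting convolution as $\bigl[K\widetilde{K}(A-\widetilde{A})\bigr]_{\mu}$ and invoke $A=\widetilde{A}$, the paper first uses $\alpha_{n+\nu}=\widetilde{\alpha}_{\widetilde{n}+\nu}$ to write both rows with the same $\alpha$'s and then shows by reindexing that the double sum equals its own negative via the antisymmetry of the determinant --- the same cancellation viewed from two angles.
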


\begin{proof}
We first prove the lemma for $H=\widetilde{H}\in
\mathbb{C}
.$ From (\ref{w}) and the definition (\ref{FFF}) of $F\left( \lambda \right)
$, we have%
\begin{equation}
F\left( \lambda \right) =\left\vert
\begin{array}{cc}
\varphi \left( \pi ,\lambda \right) & \widetilde{\varphi }\left( \pi
,\lambda \right) \\
\Delta \left( \lambda \right) & \widetilde{\Delta }\left( \lambda \right)%
\end{array}%
\right\vert .  \label{FLAMUDA}
\end{equation}%
Since $m_{n}=\widetilde{m}_{\widetilde{n}},$\ we know that%
\begin{equation}
\left. \frac{d^{k}}{d\lambda ^{k}}\Delta \left( \lambda \right) \right\vert
_{\lambda =\lambda _{n}}=0,\text{ }\left. \frac{d^{k}}{d\lambda ^{k}}%
\widetilde{\Delta }\left( \lambda \right) \right\vert _{\lambda =\lambda
_{n}}=0\text{ for }k=0,1,\ldots ,m_{n}-1.  \label{dd equal}
\end{equation}%
This directly yields $\left( \ref{d equal}\right) .$ Now we turn to prove
the second part of this lemma. It follows from $\left( \ref{ratios}\right) ,$
$\left( \ref{9}\right) ,$ $\left( \ref{24}\right) $, $\left( \ref{FLAMUDA}%
\right) $ and $\left( \ref{dd equal}\right) $ that for $\nu =0,1,\ldots
,k_{n}-1,$%
\begin{eqnarray}
&&\left. \frac{d^{m_{n}+\nu }}{d\lambda ^{m_{n}+\nu }}F\left( \lambda
\right) \right\vert _{\lambda =\lambda _{n}}  \notag \\
&=&\sum_{j=0}^{m_{n}+\nu }C_{m_{n}+\nu }^{j}\left\vert
\begin{array}{cc}
\frac{d^{m_{n}+\nu -j}\varphi \left( \pi ,\lambda \right) }{d\lambda
^{m_{n}+\nu -j}} & \frac{d^{m_{n}+\nu -j}\widetilde{\varphi }\left( \pi
,\lambda \right) }{d\lambda ^{m_{n}+\nu -j}} \\
\frac{d^{^{j}}\Delta \left( \lambda \right) }{d\lambda ^{^{j}}} & \frac{%
d^{^{j}}\widetilde{\Delta }\left( \lambda \right) }{d\lambda ^{^{j}}}%
\end{array}%
\right\vert _{\lambda =\lambda _{n}}  \notag \\
&=&\sum_{j=m_{n}}^{m_{n}+\nu }C_{m_{n}+\nu }^{j}\left\vert
\begin{array}{cc}
\frac{d^{m_{n}+\nu -j}\varphi \left( \pi ,\lambda \right) }{d\lambda
^{m_{n}+\nu -j}} & \frac{d^{m_{n}+\nu -j}\widetilde{\varphi }\left( \pi
,\lambda \right) }{d\lambda ^{m_{n}+\nu -j}} \\
\frac{d^{^{j}}\Delta \left( \lambda \right) }{d\lambda ^{^{j}}} & \frac{%
d^{^{j}}\widetilde{\Delta }\left( \lambda \right) }{d\lambda ^{^{j}}}%
\end{array}%
\right\vert _{\lambda =\lambda _{n}}  \notag \\
&=&-\sum_{j=m_{n}}^{m_{n}+\nu }\sum\limits_{l=0}^{j-m_{n}}\dfrac{%
C_{m_{n}+\nu }^{j}j!\alpha _{n+j-m_{n}-l}}{l!}\left\vert
\begin{array}{cc}
\frac{d^{m_{n}+\nu -j}\varphi \left( \pi ,\lambda \right) }{d\lambda
^{m_{n}+\nu -j}} & \frac{d^{m_{n}+\nu -j}\widetilde{\varphi }\left( \pi
,\lambda \right) }{d\lambda ^{m_{n}+\nu -j}} \\
\dfrac{d^{l}\varphi \left( \pi ,\lambda \right) }{d\lambda ^{l}} & \dfrac{%
d^{l}\widetilde{\varphi }\left( \pi ,\lambda \right) }{d\lambda ^{l}}%
\end{array}%
\right\vert _{\lambda =\lambda _{n}}  \label{posinega} \\
&=&-\sum\limits_{l=0}^{\nu }\sum_{j=m_{n}+l}^{m_{n}+\nu }\dfrac{C_{m_{n}+\nu
}^{j}j!\alpha _{n+j-m_{n}-l}}{l!}\left\vert
\begin{array}{cc}
\frac{d^{m_{n}+\nu -j}\varphi \left( \pi ,\lambda \right) }{d\lambda
^{m_{n}+\nu -j}} & \frac{d^{m_{n}+\nu -j}\widetilde{\varphi }\left( \pi
,\lambda \right) }{d\lambda ^{m_{n}+\nu -j}} \\
\dfrac{d^{l}\varphi \left( \pi ,\lambda \right) }{d\lambda ^{l}} & \dfrac{%
d^{l}\widetilde{\varphi }\left( \pi ,\lambda \right) }{d\lambda ^{l}}%
\end{array}%
\right\vert _{\lambda =\lambda _{n}}.  \notag
\end{eqnarray}%
Let $\widehat{l}=m_{n}+\nu -l,$ $\widehat{j}=m_{n}+\nu -j.$ Then
\begin{eqnarray*}
&&\left. \frac{d^{m_{n}+\nu }}{d\lambda ^{m_{n}+\nu }}F\left( \lambda
\right) \right\vert _{\lambda =\lambda _{n}} \\
&=&-\sum\limits_{\widehat{l}=m_{n}+\nu }^{m_{n}}\sum_{\widehat{j}=\widehat{l}%
-m_{n}}^{0}\dfrac{C_{m_{n}+\nu }^{\widehat{l}}\widehat{l}!\alpha _{n+%
\widehat{l}-m_{n}-\widehat{j}}}{\widehat{j}!}\left\vert
\begin{array}{cc}
\dfrac{d^{\widehat{j}}\varphi \left( \pi ,\lambda \right) }{d\lambda ^{%
\widehat{j}}} & \dfrac{d^{\widehat{j}}\widetilde{\varphi }\left( \pi
,\lambda \right) }{d\lambda ^{\widehat{j}}} \\
\frac{d^{m_{n}+\nu -\widehat{l}}\varphi \left( \pi ,\lambda \right) }{%
d\lambda ^{m_{n}+\nu -\widehat{l}}} & \frac{d^{m_{n}+\nu -\widehat{l}}%
\widetilde{\varphi }\left( \pi ,\lambda \right) }{d\lambda ^{m_{n}+\nu -%
\widehat{l}}}%
\end{array}%
\right\vert _{\lambda =\lambda _{n}}.
\end{eqnarray*}%
This together $\left( \ref{posinega}\right) $ yield that $\left. \frac{%
d^{m_{n}+\nu }}{d\lambda ^{m_{n}+\nu }}F\left( \lambda \right) \right\vert
_{\lambda =\lambda _{n}}=-\left. \frac{d^{m_{n}+\nu }}{d\lambda ^{m_{n}+\nu }%
}F\left( \lambda \right) \right\vert _{\lambda =\lambda _{n}},$ and hence $%
\left. \frac{d^{m_{n}+\nu }}{d\lambda ^{m_{n}+\nu }}F\left( \lambda \right)
\right\vert _{\lambda =\lambda _{n}}=0\ $for $\nu =0,1,\ldots ,k_{n}-1.$
This proves the lemma for the case $H=\widetilde{H}\in
\mathbb{C}
.$ In view of Remark \ref{615} and the fact%
\begin{equation*}
F\left( \lambda \right) :=\left\vert
\begin{array}{cc}
\varphi \left( \pi ,\lambda \right) & \widetilde{\varphi }\left( \pi
,\lambda \right) \\
\varphi ^{\prime }\left( \pi ,\lambda \right) & \widetilde{\varphi }^{\prime
}\left( \pi ,\lambda \right)%
\end{array}%
\right\vert =-\left\vert
\begin{array}{cc}
\Delta ^{\infty }\left( \lambda \right) & \widetilde{\Delta ^{\infty }}%
\left( \lambda \right) \\
\varphi ^{\prime }\left( \pi ,\lambda \right) & \widetilde{\varphi }^{\prime
}\left( \pi ,\lambda \right)%
\end{array}%
\right\vert ,
\end{equation*}%
the lemma for $H=\widetilde{H}=\infty $ can be proved similarly.
\end{proof}

\begin{lemma}
\label{Fqh}Assume that $d=\widetilde{d}\ $and $q$ $=\tilde{q}$ a.e. on $%
\left[ b,\pi \right] $ for some $b\in \left( 0,\pi \right] ,$ then following
expressions hold$:$
\begin{eqnarray*}
F\left( \lambda \right) &=&\left. \left\langle \varphi \left( x,\lambda
\right) ,\widetilde{\varphi }\left( x,\lambda \right) \right\rangle
\right\vert _{x=b}\text{ for }b\in \left( d,\pi \right] , \\
F\left( \lambda \right) &=&\left. \left\langle \varphi \left( x,\lambda
\right) ,\widetilde{\varphi }\left( x,\lambda \right) \right\rangle
\right\vert _{x=d+0}\text{ for }b=d, \\
F\left( \lambda \right) &=&\left. \left\langle \varphi \left( x,\lambda
\right) ,\widetilde{\varphi }\left( x,\lambda \right) \right\rangle
\right\vert _{x=b}+\left. \left\langle \varphi \left( x,\lambda \right) ,%
\widetilde{\varphi }\left( x,\lambda \right) \right\rangle \right\vert
_{d-0}^{d+0}\text{ for }b\in \left( 0,d\right) .
\end{eqnarray*}
\end{lemma}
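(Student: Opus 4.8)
The plan is to exploit the elementary but fundamental fact that the modified Wronskian $\left\langle \varphi ,\widetilde{\varphi }\right\rangle $ has an $x$-derivative governed entirely by the difference of the potentials. First I would establish that on any open subinterval of $\left( 0,\pi \right) $ not containing the discontinuity point $d$, where both $\varphi \left( \cdot ,\lambda \right) $ and $\widetilde{\varphi }\left( \cdot ,\lambda \right) $ have absolutely continuous first derivatives, one has
\begin{equation*}
\frac{d}{dx}\left\langle \varphi \left( x,\lambda \right) ,\widetilde{\varphi }\left( x,\lambda \right) \right\rangle =\left( \widetilde{q}\left( x\right) -q\left( x\right) \right) \varphi \left( x,\lambda \right) \widetilde{\varphi }\left( x,\lambda \right) \text{\quad a.e.}
\end{equation*}
This follows by differentiating $\varphi \widetilde{\varphi }^{\prime }-\varphi ^{\prime }\widetilde{\varphi }$, substituting $\varphi ^{\prime \prime }=\left( q-\lambda \right) \varphi $ and $\widetilde{\varphi }^{\prime \prime }=\left( \widetilde{q}-\lambda \right) \widetilde{\varphi }$, and cancelling the $\lambda $-terms. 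Since $q=\widetilde{q}$ a.e. on $\left[ b,\pi \right] $, the right-hand side vanishes a.e. there, so by absolute continuity $\left\langle \varphi ,\widetilde{\varphi }\right\rangle $ is \emph{constant} on each continuity subinterval contained in $\left[ b,\pi \right] $.

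The three formulas then drop out by distinguishing the position of $b$ relative to $d$. When $b\in \left( d,\pi \right] $, the interval $\left[ b,\pi \right] $ lies entirely to the right of $d$, so $\left\langle \varphi ,\widetilde{\varphi }\right\rangle $ is constant on all of $\left[ b,\pi \right] $ and hence $F\left( \lambda \right) =\left. \left\langle \varphi ,\widetilde{\varphi }\right\rangle \right\vert _{x=\pi }=\left. \left\langle \varphi ,\widetilde{\varphi }\right\rangle \right\vert _{x=b}$. When $b=d$, the same argument on $\left( d,\pi \right] $ gives $F\left( \lambda \right) =\left. \left\langle \varphi ,\widetilde{\varphi }\right\rangle \right\vert _{x=d+0}$. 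When $b\in \left( 0,d\right) $, the interval $\left[ b,\pi \right] $ contains $d$ and constancy holds separately on $\left( b,d\right) $ and on $\left( d,\pi \right) $; in particular $\left. \left\langle \varphi ,\widetilde{\varphi }\right\rangle \right\vert _{x=d-0}=\left. \left\langle \varphi ,\widetilde{\varphi }\right\rangle \right\vert _{x=b}$ and $\left. \left\langle \varphi ,\widetilde{\varphi }\right\rangle \right\vert _{x=\pi }=\left. \left\langle \varphi ,\widetilde{\varphi }\right\rangle \right\vert _{x=d+0}$. Writing
\begin{equation*}
F\left( \lambda \right) =\left. \left\langle \varphi ,\widetilde{\varphi }\right\rangle \right\vert _{x=d+0}=\left. \left\langle \varphi ,\widetilde{\varphi }\right\rangle \right\vert _{x=d-0}+\left. \left\langle \varphi ,\widetilde{\varphi }\right\rangle \right\vert _{d-0}^{d+0}
\end{equation*}
and replacing the first summand by its value at $b$ yields the third expression.

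The only delicate point is the treatment of the jump at $x=d$. Because $\varphi $ and $\widetilde{\varphi }$ need not satisfy the same discontinuous conditions $\left( \ref{DCs}\right) $ (the coefficients $\beta ,\gamma $ and $\widetilde{\beta },\widetilde{\gamma }$ may differ even though $d=\widetilde{d}$), the Wronskian genuinely jumps across $d$, and this jump is exactly the surviving term $\left. \left\langle \varphi ,\widetilde{\varphi }\right\rangle \right\vert _{d-0}^{d+0}$ in the third formula; when $b\geq d$ this contribution is absorbed into the evaluation at $b$ or at $d+0$ and disappears from the statement. I expect the main care to go into verifying that $\varphi $ and $\widetilde{\varphi }$ are smooth enough on each open subinterval avoiding $d$ for the pointwise derivative identity to hold, and into the bookkeeping of the one-sided limits at $d$; once these are in place, everything reduces to integrating a derivative that vanishes on the prescribed set.
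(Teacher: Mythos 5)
Your proposal is correct and rests on the same key identity as the paper's proof, namely $\frac{d}{dx}\left\langle \varphi ,\widetilde{\varphi }\right\rangle =-\hat{q}\,\varphi \widetilde{\varphi }$; the paper organizes the computation by first writing $F(\lambda )$ as $-\int_{0}^{\pi }\hat{q}\varphi \widetilde{\varphi }\,dx$ plus the boundary and jump terms and then re-expressing the truncated integral over $[0,b]$, whereas you invoke the local constancy of the modified Wronskian on the subintervals of $[b,\pi ]$ avoiding $d$ directly, which is the same argument with slightly cleaner bookkeeping. Your handling of the jump at $x=d$ (surviving only in the case $b\in (0,d)$, absorbed into the evaluation point otherwise) matches the paper's case analysis exactly.
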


\begin{proof}
From the definition $\left( \ref{FFF}\right) $ of $F\left( \lambda \right) ,$
one can easily deduce that%
\begin{eqnarray*}
F\left( \lambda \right) &=&-\int_{0}^{\pi }\hat{q}\left( x\right) \varphi
\left( x,\lambda \right) \widetilde{\varphi }\left( x,\lambda \right)
dx+\left. \left\langle \varphi \left( x,\lambda \right) ,\widetilde{\varphi }%
\left( x,\lambda \right) \right\rangle \right\vert _{x=0} \\
&&+\left. \left\langle \varphi \left( x,\lambda \right) ,\widetilde{\varphi }%
\left( x,\lambda \right) \right\rangle \right\vert _{x=d+0}-\left.
\left\langle \varphi \left( x,\lambda \right) ,\widetilde{\varphi }\left(
x,\lambda \right) \right\rangle \right\vert _{x=d-0}.
\end{eqnarray*}
Hence by $q$ $=\tilde{q}$ a.e. on $\left[ b,\pi \right] $ we infer from the
above equality that
\begin{eqnarray*}
F\left( \lambda \right) &=&-\int_{0}^{b}\hat{q}\left( x\right) \varphi
\left( x,\lambda \right) \widetilde{\varphi }\left( x,\lambda \right)
dx+\left. \left\langle \varphi \left( x,\lambda \right) ,\widetilde{\varphi }%
\left( x,\lambda \right) \right\rangle \right\vert _{x=0} \\
&&+\left. \left\langle \varphi \left( x,\lambda \right) ,\widetilde{\varphi }%
\left( x,\lambda \right) \right\rangle \right\vert _{x=d+0}-\left.
\left\langle \varphi \left( x,\lambda \right) ,\widetilde{\varphi }\left(
x,\lambda \right) \right\rangle \right\vert _{x=d-0}.
\end{eqnarray*}%
Therefore, this lemma can be directly proved by the following facts%
\begin{eqnarray*}
&&-\int_{0}^{b}\hat{q}\left( x\right) \varphi \left( x,\lambda \right)
\widetilde{\varphi }\left( x,\lambda \right) dx \\
&=&\left\{
\begin{array}{l}
\left. \left\langle \varphi \left( x,\lambda \right) ,\widetilde{\varphi }%
\left( x,\lambda \right) \right\rangle \right\vert _{d+0}^{b}+\left.
\left\langle \varphi \left( x,\lambda \right) ,\widetilde{\varphi }\left(
x,\lambda \right) \right\rangle \right\vert _{0}^{d-0}\text{ for }b\in
\left( d,\pi \right] , \\
\left. \left\langle \varphi \left( x,\lambda \right) ,\widetilde{\varphi }%
\left( x,\lambda \right) \right\rangle \right\vert _{0}^{d-0}\text{ for }b=d,
\\
\left. \left\langle \varphi \left( x,\lambda \right) ,\widetilde{\varphi }%
\left( x,\lambda \right) \right\rangle \right\vert _{0}^{b}\text{ for }b\in
\left( 0,d\right) .%
\end{array}%
\right.
\end{eqnarray*}
\end{proof}

\begin{lemma}
\label{jjgj}{As $\left\vert \lambda \right\vert \rightarrow \infty $,
\begin{equation}
\varphi \left( x,\lambda \right) =\left\{
\begin{array}{l}
\cos \left( \sqrt{\lambda }x\right) +O\left( \frac{\exp \left( \left\vert
\mathrm{Im}\sqrt{\lambda }\right\vert x\right) }{\sqrt{\lambda }}\right)
,x<d, \\
\left( b_{1}\cos \left( \sqrt{\lambda }x\right) +b_{2}\cos \left( \sqrt{%
\lambda }\left( 2d-x\right) \right) \right) +O\left( \frac{\exp \left(
\left\vert \mathrm{Im}\sqrt{\lambda }\right\vert x\right) }{\sqrt{\lambda }}%
\right) ,x>d,%
\end{array}%
\right.  \label{1}
\end{equation}%
\begin{equation}
\varphi ^{\prime }\left( x,\lambda \right) =\left\{
\begin{array}{l}
-\sqrt{\lambda }\sin \left( \sqrt{\lambda }x\right) +O\left( \exp \left(
\left\vert \mathrm{Im}\sqrt{\lambda }\right\vert x\right) \right) ,x<d, \\
\sqrt{\lambda }\left( -b_{1}\sin \left( \sqrt{\lambda }x\right) +b_{2}\sin
\left( \sqrt{\lambda }\left( 2d-x\right) \right) \right) +O\left( \exp
\left( \left\vert \mathrm{Im}\sqrt{\lambda }\right\vert x\right) \right)
,x>d,%
\end{array}%
\right.  \label{11}
\end{equation}%
where $b_{1}=\dfrac{\beta +\beta ^{-1}}{2}$ and $b_{2}=\dfrac{\beta -\beta
^{-1}}{2}$}.

\begin{proof}
See \cite[p.145-146]{Yurko}.
\end{proof}
\end{lemma}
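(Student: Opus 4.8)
The plan is to run the classical transformation-operator (Volterra integral equation) argument on the two subintervals $[0,d)$ and $(d,\pi]$ separately and then glue the pieces through the transmission conditions $(\ref{DCs})$. Writing $\rho:=\sqrt{\lambda}$, variation of parameters converts the initial value problem for $\varphi$ on $[0,d)$ into the Volterra integral equation
\begin{equation*}
\varphi\left( x,\lambda\right) =\cos\left( \rho x\right) +\frac{h}{\rho}\sin\left( \rho x\right) +\frac{1}{\rho}\int_{0}^{x}\sin\left( \rho\left( x-t\right) \right) q\left( t\right) \varphi\left( t,\lambda\right) dt.
\end{equation*}
Setting $u(x):=\left\vert \varphi(x,\lambda)\right\vert \exp\left( -\left\vert \mathrm{Im}\,\rho\right\vert x\right)$ and using $\left\vert \sin\left( \rho s\right) \right\vert \leq\exp\left( \left\vert \mathrm{Im}\,\rho\right\vert s\right)$, Gr\"onwall's inequality gives $u(x)\leq C$ uniformly for $\left\vert \rho\right\vert$ large, hence $\left\vert \varphi(x,\lambda)\right\vert \leq C\exp\left( \left\vert \mathrm{Im}\,\rho\right\vert x\right)$. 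Feeding this bound back into the integral term (and using $q\in L^{1}$) produces the $O\left( \rho^{-1}\exp\left( \left\vert \mathrm{Im}\,\rho\right\vert x\right) \right)$ error; differentiating the integral equation yields the companion estimate for $\varphi'$. This establishes $(\ref{1})$ and $(\ref{11})$ for $x<d$.

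Next I would evaluate the $x<d$ formulas at $x=d-0$ and push them through $(\ref{DCs})$. Since $\beta$, $\gamma$, $h$ are fixed constants, this gives
\begin{equation*}
\varphi\left( d+0,\lambda\right) =\beta\cos\left( \rho d\right) +O\!\left( \rho^{-1}e^{\left\vert \mathrm{Im}\,\rho\right\vert d}\right) ,\quad \varphi'\left( d+0,\lambda\right) =-\beta^{-1}\rho\sin\left( \rho d\right) +O\!\left( e^{\left\vert \mathrm{Im}\,\rho\right\vert d}\right) ,
\end{equation*}
the $\gamma\varphi(d-0)$ contribution being absorbed into the $O\left( e^{\left\vert \mathrm{Im}\,\rho\right\vert d}\right)$ term. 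On $(d,\pi]$ the same variation-of-parameters identity, now based at the point $d$, reads
\begin{equation*}
\varphi\left( x,\lambda\right) =\varphi\left( d+0\right) \cos\left( \rho\left( x-d\right) \right) +\frac{\varphi'\left( d+0\right) }{\rho}\sin\left( \rho\left( x-d\right) \right) +\frac{1}{\rho}\int_{d}^{x}\sin\left( \rho\left( x-t\right) \right) q\left( t\right) \varphi\left( t,\lambda\right) dt,
\end{equation*}
and the Gr\"onwall estimate reruns verbatim, the only change being that the free part now has size $O\left( e^{\left\vert \mathrm{Im}\,\rho\right\vert d}\cdot e^{\left\vert \mathrm{Im}\,\rho\right\vert \left( x-d\right) }\right) =O\left( e^{\left\vert \mathrm{Im}\,\rho\right\vert x}\right)$, so the error weight stays uniform in $x$.

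Substituting the matched data into the last identity gives, to leading order, $\beta\cos\left( \rho d\right) \cos\left( \rho\left( x-d\right) \right) -\beta^{-1}\sin\left( \rho d\right) \sin\left( \rho\left( x-d\right) \right)$, and the product-to-sum identities with $A=\rho d$, $B=\rho\left( x-d\right)$ (so that $A+B=\rho x$ and $A-B=\rho\left( 2d-x\right)$) collapse this to $b_{1}\cos\left( \rho x\right) +b_{2}\cos\left( \rho\left( 2d-x\right) \right)$ with $b_{1}=\frac{\beta+\beta^{-1}}{2}$ and $b_{2}=\frac{\beta-\beta^{-1}}{2}$; the analogous identities applied to the differentiated relation give $\rho\left( -b_{1}\sin\left( \rho x\right) +b_{2}\sin\left( \rho\left( 2d-x\right) \right) \right)$, which is exactly $(\ref{11})$ for $x>d$. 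I expect no serious obstacle here: the single point demanding care is the uniformity of the remainder as $\left\vert \lambda\right\vert \to\infty$ across the jump at $d$, namely checking that the $L^{1}$ Gr\"onwall constant and the exponential weight $e^{\left\vert \mathrm{Im}\,\rho\right\vert x}$ are propagated unchanged through the transmission conditions rather than amplified. Once that is secured the trigonometric reorganization is purely cosmetic, and the result agrees with the computation in \cite[p.145-146]{Yurko}.
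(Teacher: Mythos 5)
Your argument is correct and is precisely the standard Volterra-integral-equation/Gr\"onwall computation with the jump conditions spliced in at $x=d$, which is exactly what the paper delegates to the cited reference \cite[p.145--146]{Yurko} rather than writing out. The leading-order bookkeeping through the transmission conditions and the product-to-sum reduction to $b_{1}=\frac{\beta+\beta^{-1}}{2}$, $b_{2}=\frac{\beta-\beta^{-1}}{2}$ all check out.
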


\begin{remark}
If $\lambda =iy$ with $y\in
\mathbb{R}
,$ then by Lemma \ref{jjgj}, $\left( \ref{w}\right) $ and $\left( \ref{w2}%
\right) ,$ one deduces that as $\left\vert y\right\vert \rightarrow \infty ,$
\begin{eqnarray}
\left\vert \Delta \left( iy\right) \right\vert &=&\frac{b_{1}}{2}\left\vert
y\right\vert ^{\frac{1}{2}}\exp \left( \left\vert \mathrm{Im}\sqrt{iy}%
\right\vert \pi \right) \left( 1+o\left( 1\right) \right) ,  \label{delta1}
\\
\left\vert \Delta ^{\infty }\left( iy\right) \right\vert &=&\frac{b_{1}}{2}%
\exp \left( \left\vert \mathrm{Im}\sqrt{iy}\right\vert \pi \right) \left(
1+o\left( 1\right) \right) ,  \label{delta2} \\
\left\vert \varphi \left( b,iy\right) \right\vert &=&\left\{
\begin{array}{l}
\frac{b_{1}}{2}\exp \left( \left\vert \mathrm{Im}\sqrt{iy}\right\vert
b\right) \left( 1+o\left( 1\right) \right) \ \text{for }b>d, \\
\frac{1}{2}\exp \left( \left\vert \mathrm{Im}\sqrt{iy}\right\vert b\right)
\left( 1+o\left( 1\right) \right) \ \text{for }b<d,%
\end{array}%
\right.  \label{faib} \\
\left\vert \varphi \left( d+0,iy\right) \right\vert &=&\frac{\beta }{2}\exp
\left( \left\vert \mathrm{Im}\sqrt{iy}\right\vert d\right) \left( 1+o\left(
1\right) \right) .  \label{faid}
\end{eqnarray}
\end{remark}


We conclude this section with two lemmas $\left( \text{see Lemma \ref{number}
and Lemma \ref{proposition}}\right) $, which will be used in Section 3 to
prove our main results. Now we first give some notations and basic facts.%
%

Recall that $\sigma \left( B\right) :=\left\{ \lambda _{n}\right\} _{n\in
\mathbb{N}
_{0}}$ and $\sigma \left( B^{\infty }\right) :=\left\{ \lambda _{n}^{\infty
}\right\} _{n\in
\mathbb{N}
_{0}}$ are the sequences consisting of all the eigenvalues of $B$ and $%
B^{\infty },$ respectively. By the asymptotics of the eigenvalues $\lambda
_{n}$ and $\lambda _{n}^{\infty }$ \cite{YIXUAN}, it is easy to see that
there exist constants $r_{1}$ and $r_{2}$ such that%
\begin{equation*}
\min\limits_{n\in
\mathbb{N}
_{0}}\left\{ \text{Re}\lambda _{n}\right\} \geq r_{1},\text{ }%
\min\limits_{n\in
\mathbb{N}
_{0}}\left\{ \text{Re}\lambda _{n}^{\infty }\right\} \geq r_{2}.\text{ }
\end{equation*}%
Hence by adding (if necessary) a sufficiently large constant to the
potential coefficient $q$, throughout this paper we may assume that
\begin{equation}
N_{\sigma \left( B\right) }\left( t\right) =N_{\sigma \left( B^{\infty
}\right) }\left( t\right) =0\text{ for }t\leq 1.  \label{0000}
\end{equation}%
By Lemma \ref{jjgj} one can easily deduce that $\Delta \left( \lambda
\right) $ and $\Delta ^{\infty }\left( \lambda \right) $ are entire in $%
\lambda \in
\mathbb{C}
$ of order $\frac{1}{2},$ and hence by Hadamard's Factorization Theorem \cite%
[Ch. I]{levision}, there exist constants $C_{B}$ and $C_{B^{\infty }}$ such
that%
\begin{eqnarray}
\Delta \left( \lambda \right) &=&C_{B}\prod\limits_{n=0}^{\infty }\left( 1-%
\frac{\lambda }{\lambda _{n}}\right) ,  \label{ch1} \\
\Delta ^{\infty }\left( \lambda \right) &=&C_{B^{\infty
}}\prod\limits_{n=0}^{\infty }\left( 1-\frac{\lambda }{\lambda _{n}^{\infty }%
}\right) .  \label{ch2}
\end{eqnarray}%
Moreover, it follows from \cite[Ch. I, Theorem 4]{levision} that%
\begin{equation}
N_{\sigma \left( B\right) }\left( t\right) \leq C\left\vert t\right\vert
^{\rho }\text{ and }N_{\sigma \left( B^{\infty }\right) }\left( t\right)
\leq C\left\vert t\right\vert ^{\rho }\text{for all }\rho >\frac{1}{2},
\label{rrrrr}
\end{equation}%
where $C$ is some positive constant.

\begin{lemma}
\label{number}Let $X:=\{x_{n}\}_{n=0}^{\infty }$ with $0<\left\vert
x_{0}\right\vert \leq \left\vert x_{1}\right\vert \leq \left\vert
x_{2}\right\vert \leq \cdots $ be a sequence satisfying
\begin{equation}
\max\limits_{n\in
\mathbb{N}
_{0}}\left\vert \text{Im}x_{n}\right\vert \leq c_{1\text{ \ }}\text{for some
}c_{1}>0,  \label{c111}
\end{equation}%
and%
\begin{eqnarray}
N_{X}\left( t\right) &=&0\text{ for }t\leq 1,  \label{000} \\
N_{X}\left( t\right) &\leq &C\left\vert t\right\vert ^{\rho }\text{ for all }%
\rho >\rho _{0},  \label{rrr}
\end{eqnarray}%
where $C$ is some positive constant and $\rho _{0}\in \left( 0,1\right) $ is
fixed$.$ If there exist real constants $l_{1},$ $l_{2},$ $l_{3}$ such that
for sufficiently large $t\in
\mathbb{R}
,$%
\begin{equation}
N_{X}\left( t\right) \geq l_{1}N_{\sigma \left( B\right) }\left( t\right)
+l_{2}N_{\sigma \left( B^{\infty }\right) }\left( t\right) +l_{3},
\label{hop}
\end{equation}%
then there exists a constant $M>0$ such that for sufficiently large $%
\left\vert y\right\vert $ $(y$ being real$)$
\begin{equation*}
\left\vert G_{X}\left( iy\right) \right\vert \geq M\left\vert y\right\vert ^{%
\frac{l_{1}}{2}+l_{3}}e^{\pi \left( l_{1}+l_{2}\right) \left\vert \text{Im}%
\sqrt{iy}\right\vert },
\end{equation*}%
where $G_{X}\left( \lambda \right) :=\prod\limits_{n=0}^{\infty }\left( 1-%
\frac{\lambda }{x_{n}}\right) .$
\end{lemma}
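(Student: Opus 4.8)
The plan is to pass from the canonical product $G_X$ to a radial integral against the counting function $N_X$, and then to feed in the hypothesis \eqref{hop} together with the known behaviour of $\Delta$ and $\Delta^{\infty}$ on the imaginary axis recorded in \eqref{delta1}--\eqref{delta2}.

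First I would reduce the complex zeros to their moduli. Writing $r_n=|x_n|$ and $b_n=\mathrm{Im}\,x_n$, a direct computation gives
\[
\left|1-\frac{iy}{x_n}\right|^2=\frac{r_n^2+y^2-2yb_n}{r_n^2}
=\left(1+\frac{y^2}{r_n^2}\right)\left(1-\frac{2yb_n}{r_n^2+y^2}\right),
\]
so that $\log|G_X(iy)|=\tfrac12\sum_n\log(1+y^2/r_n^2)+\tfrac12\sum_n\log\bigl(1-2yb_n/(r_n^2+y^2)\bigr)$. By \eqref{c111} one has $|b_n|\le c_1$, hence $|2yb_n/(r_n^2+y^2)|\le c_1/r_n$ from $r_n^2+y^2\ge 2r_n|y|$, and by \eqref{rrr} with some $\rho\in(\rho_0,1)$ the series $\sum_n 1/r_n$ converges (the finitely many small $r_n$ being harmless for $|y|$ large). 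The second sum is thus bounded uniformly in large $|y|$, and setting $P_X(y):=\sum_n\log(1+y^2/r_n^2)$ I obtain $\log|G_X(iy)|=\tfrac12 P_X(y)+O(1)$. The same reduction applies to the reference sequences, whose eigenvalues have bounded imaginary parts and order $\tfrac12$, giving $P_{\sigma(B)}(y)=2\log|G_{\sigma(B)}(iy)|+O(1)$ and likewise for $B^{\infty}$.

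Next I would express $P_X$ through $N_X$. Since $N_X(t)=0$ for $t\le 1$ by \eqref{000} and $N_X(t)\le Ct^{\rho}$ with $\rho<2$, integration by parts in the Stieltjes integral $P_X(y)=\int_1^{\infty}\log(1+y^2/t^2)\,dN_X(t)$ annihilates both boundary terms and yields
\[
P_X(y)=\int_1^{\infty}N_X(t)\,\frac{2y^2}{t(t^2+y^2)}\,dt,
\]
with the nonnegative kernel $K_y(t):=2y^2/\bigl(t(t^2+y^2)\bigr)$. I record that $\int_T^{\infty}K_y(t)\,dt=\log\frac{T^2+y^2}{T^2}=2\log|y|+O(1)$, and that $\int_1^T N(t)K_y(t)\,dt=O(1)$ for any fixed $T$ and any locally bounded $N$, since $K_y(t)\to 2/t$ uniformly on $[1,T]$.

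Finally I would invoke the hypothesis. Taking $T$ past the threshold where \eqref{hop} holds, nonnegativity of $N_X$ and of $K_y$ gives
\[
P_X(y)\ge\int_T^{\infty}N_X K_y\,dt\ge\int_T^{\infty}\bigl(l_1 N_{\sigma(B)}+l_2 N_{\sigma(B^{\infty})}+l_3\bigr)K_y\,dt,
\]
and converting each tail back to $[1,\infty)$ at the cost of $O(1)$ terms yields $P_X(y)\ge l_1 P_{\sigma(B)}(y)+l_2 P_{\sigma(B^{\infty})}(y)+2l_3\log|y|+O(1)$. Now \eqref{ch1}--\eqref{ch2} identify $\Delta=C_B G_{\sigma(B)}$, $\Delta^{\infty}=C_{B^{\infty}}G_{\sigma(B^{\infty})}$, so \eqref{delta1}--\eqref{delta2} give $P_{\sigma(B)}(y)=\log|y|+2\pi|\mathrm{Im}\sqrt{iy}|+O(1)$ and $P_{\sigma(B^{\infty})}(y)=2\pi|\mathrm{Im}\sqrt{iy}|+O(1)$; substituting and using $\log|G_X(iy)|=\tfrac12 P_X(y)+O(1)$ produces
\[
\log|G_X(iy)|\ge\Bigl(\tfrac{l_1}{2}+l_3\Bigr)\log|y|+\pi(l_1+l_2)|\mathrm{Im}\sqrt{iy}|+O(1),
\]
whence exponentiation gives the claim with a suitable $M>0$. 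I expect the main obstacle to be the first step, namely showing that replacing $x_n$ by $r_n$ changes $\log|G_X(iy)|$ only by a quantity bounded uniformly in $|y|$: this is exactly where the subquadratic growth $\rho_0<1$, ensuring $\sum_n 1/r_n<\infty$, is indispensable. The secondary point requiring care is the sign bookkeeping when $l_1,l_2,l_3$ may be negative, handled by splitting the integral at $T$ and absorbing the $[1,T]$ contributions into $O(1)$.
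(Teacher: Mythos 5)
Your proposal is correct and follows essentially the same route as the paper: split $\log|G_X(iy)|$ into a radial part expressed as $\int_1^\infty N_X(t)\,\frac{2y^2}{t(t^2+y^2)}\,dt$ via integration by parts plus a correction controlled by the bounded imaginary parts, feed in \eqref{hop}, and read off the asymptotics of $|G_{\sigma(B)}(iy)|$, $|G_{\sigma(B^\infty)}(iy)|$ from \eqref{delta1}--\eqref{delta2} via \eqref{ch1}--\eqref{ch2}. The only cosmetic difference is that you bound the non-radial correction by the summability of $\sum_n 1/|x_n|$ (using $\rho_0<1$), whereas the paper bounds the analogous quantity $g(y)$ by another integration by parts against the counting functions; both are sound.
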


\begin{proof}
Note that%
\begin{equation}
\frac{d}{dt}\left[ \frac{1}{2}\ln \left( 1+\frac{y^{2}}{t^{2}}\right) \right]
=-\frac{y^{2}}{t^{3}+ty^{2}}.  \label{d}
\end{equation}%
Then by $\left( \ref{000}\right) ,$ $\left( \ref{rrr}\right) ,$ $\left( \ref%
{d}\right) $ and integration by parts, we infer that for $y\in
\mathbb{R}
$,
\begin{eqnarray*}
&&\ln \left\vert G_{X}\left( iy\right) \right\vert \\
&=&\frac{1}{2}\sum_{n=0}^{\infty }\ln \frac{\left[ \left( 1-\frac{iy}{x_{n}}%
\right) \left( 1+\frac{iy}{\overline{x_{n}}}\right) \right] }{1+\frac{y^{2}}{%
\left\vert x_{n}\right\vert ^{2}}}+\frac{1}{2}\sum_{n=0}^{\infty }\ln \left(
1+\frac{\left\vert y\right\vert ^{2}}{\left\vert x_{n}\right\vert ^{2}}%
\right) \\
&=&\frac{1}{2}\sum_{n=0}^{\infty }\ln \frac{\left( 1+\frac{2y\text{Im}%
x_{n}+y^{2}}{\left\vert x_{n}\right\vert ^{2}}\right) }{1+\frac{y^{2}}{%
\left\vert x_{n}\right\vert ^{2}}}+\frac{1}{2}\int_{0}^{\infty }\ln \left( 1+%
\frac{y^{2}}{t^{2}}\right) dN_{X}\left( t\right) \\
&=&\frac{1}{2}\sum_{n=0}^{\infty }\ln \frac{\left( 1+\frac{2y\text{Im}%
x_{n}+y^{2}}{\left\vert x_{n}\right\vert ^{2}}\right) }{1+\frac{y^{2}}{%
\left\vert x_{n}\right\vert ^{2}}}+\int_{1}^{\infty }\frac{y^{2}}{%
t^{3}+ty^{2}}N_{X}\left( t\right) dt.
\end{eqnarray*}%
Similarly, by $\left( \ref{0000}\right) ,$ $\left( \ref{rrrrr}\right) \ $and
$\left( \ref{d}\right) $ we deduce that
\begin{eqnarray*}
l_{1}\ln \left\vert G_{\sigma \left( B\right) }\left( iy\right) \right\vert
&=&\frac{l_{1}}{2}\sum_{n=0}^{\infty }\ln \frac{\left( 1+\frac{2y\text{Im}%
\lambda _{n}+y^{2}}{\left\vert \lambda _{n}\right\vert ^{2}}\right) }{1+%
\frac{y^{2}}{\left\vert \lambda _{n}\right\vert ^{2}}}+l_{1}\int_{1}^{\infty
}\frac{y^{2}}{t^{3}+ty^{2}}N_{\sigma \left( B\right) }\left( t\right) dt, \\
l_{2}\ln \left\vert G_{\sigma \left( B^{\infty }\right) }\left( iy\right)
\right\vert &=&\frac{l_{2}}{2}\sum_{n=0}^{\infty }\ln \frac{\left( 1+\frac{2y%
\text{Im}\lambda _{n}^{\infty }+y^{2}}{\left\vert \lambda _{n}^{\infty
}\right\vert ^{2}}\right) }{1+\frac{y^{2}}{\left\vert \lambda _{n}^{\infty
}\right\vert ^{2}}}+l_{2}\int_{1}^{\infty }\frac{y^{2}}{t^{3}+ty^{2}}%
N_{\sigma \left( B^{\infty }\right) }\left( t\right) dt.
\end{eqnarray*}%
where%
\begin{equation}
G_{\sigma \left( B\right) }\left( \lambda \right)
:=\prod\limits_{n=0}^{\infty }\left( 1-\frac{\lambda }{\lambda _{n}}\right)
\text{ and }G_{\sigma \left( B^{\infty }\right) }\left( \lambda \right)
:=\prod\limits_{n=0}^{\infty }\left( 1-\frac{\lambda }{\lambda _{n}^{\infty }%
}\right) .
\end{equation}%
Therefore,
\begin{eqnarray}
&&\ln \left\vert G_{X}\left( iy\right) \right\vert -l_{1}\ln \left\vert
G_{\sigma \left( B\right) }\left( iy\right) \right\vert -l_{2}\ln \left\vert
G_{\sigma \left( B^{\infty }\right) }\left( iy\right) \right\vert  \label{31}
\\
&=&g(y)+\int_{1}^{\infty }\frac{y^{2}}{t^{3}+ty^{2}}\left( N_{X}\left(
t\right) -l_{1}N_{\sigma \left( B\right) }\left( t\right) -l_{2}N_{\sigma
\left( B^{\infty }\right) }\left( t\right) \right) dt,  \notag
\end{eqnarray}%
where
\begin{eqnarray}
g(y):= &&\frac{1}{2}\sum_{n=0}^{\infty }\ln \frac{\left( 1+\frac{2y\text{Im}%
x_{n}+y^{2}}{\left\vert x_{n}\right\vert ^{2}}\right) }{1+\frac{y^{2}}{%
\left\vert x_{n}\right\vert ^{2}}}-\frac{l_{1}}{2}\sum_{n=0}^{\infty }\ln
\frac{\left( 1+\frac{2y\text{Im}\lambda _{n}+y^{2}}{\left\vert \lambda
_{n}\right\vert ^{2}}\right) }{1+\frac{y^{2}}{\left\vert \lambda
_{n}\right\vert ^{2}}}  \label{gy} \\
&&-\frac{l_{2}}{2}\sum_{n=0}^{\infty }\ln \frac{\left( 1+\frac{2y\text{Im}%
\lambda _{n}^{\infty }+y^{2}}{\left\vert \lambda _{n}^{\infty }\right\vert
^{2}}\right) }{1+\frac{y^{2}}{\left\vert \lambda _{n}^{\infty }\right\vert
^{2}}}.  \notag
\end{eqnarray}%
Next, we aim to show that there exists a constant $C_{g}>0$ such that
\begin{equation}
\left\vert g(y)\right\vert \leq C_{g}\text{ for all }y\in
\mathbb{R}
.  \label{34}
\end{equation}%
In fact, we first note that there exist constants $c_{2}$ and $c_{3}$ such
that
\begin{equation}
\max\limits_{n\in
\mathbb{N}
_{0}}\left\vert \text{Im}\lambda _{n}\right\vert \leq c_{2}\text{ and }%
\max\limits_{n\in
\mathbb{N}
_{0}}\left\vert \text{Im}\lambda _{n}^{\infty }\right\vert \leq c_{3},
\label{c1c2}
\end{equation}%
which can be obtained from the asymptotics of the eigenvalues $\lambda _{n}$
and $\lambda _{n}^{\infty }$ \cite{YIXUAN}$.$ In addition,%
\begin{equation}
\frac{d}{dt}\left[ \ln \left( 1+\frac{2\left\vert y\right\vert c_{i}}{%
t^{2}+y^{2}}\right) \right] =-\frac{4\left\vert y\right\vert c_{i}t}{\left(
t^{2}+y^{2}+2\left\vert y\right\vert c_{i}\right) \left( t^{2}+y^{2}\right) }%
,\text{ }i=1,2,3,  \label{dt}
\end{equation}%
where $c_{1}$ is defined by $\left( \ref{c111}\right) $ and $c_{2},$ $c_{3}$
are defined by $\left( \ref{c1c2}\right) .$ Then by $\left( \ref{rrrrr}%
\right) ,$ $\left( \ref{c111}\right) ,$ $\left( \ref{rrr}\right) ,$ $\left( %
\ref{gy}\right) ,$ $\left( \ref{c1c2}\right) ,$ $\left( \ref{dt}\right) $
and integration by parts, we obtain that
\begin{eqnarray*}
\left\vert g(y)\right\vert &\leq &\frac{1}{2}\sum_{n=0}^{\infty }\ln \left(
1+\frac{2\left\vert y\right\vert c_{1}}{\left\vert x_{n}\right\vert
^{2}+\left\vert y\right\vert ^{2}}\right) +\left\vert \frac{l_{1}}{2}%
\right\vert \sum_{n=0}^{\infty }\ln \left( 1+\frac{2\left\vert y\right\vert
c_{2}}{\left\vert \lambda _{n}\right\vert ^{2}+\left\vert y\right\vert ^{2}}%
\right) \\
&&+\left\vert \frac{l_{2}}{2}\right\vert \sum_{n=0}^{\infty }\ln \left( 1+%
\frac{2\left\vert y\right\vert c_{3}}{\left\vert \lambda _{n}^{\infty
}\right\vert ^{2}+\left\vert y\right\vert ^{2}}\right) \\
&=&\frac{1}{2}\int_{1}^{\infty }\ln \left( 1+\frac{2\left\vert y\right\vert
c_{1}}{t^{2}+y^{2}}\right) dN_{X}\left( t\right) +\left\vert \frac{l_{1}}{2}%
\right\vert \int_{1}^{\infty }\ln \left( 1+\frac{2\left\vert y\right\vert
c_{2}}{t^{2}+y^{2}}\right) dN_{\sigma \left( B\right) }\left( t\right) \\
&&+\left\vert \frac{l_{2}}{2}\right\vert \int_{1}^{\infty }\ln \left( 1+%
\frac{2\left\vert y\right\vert c_{3}}{t^{2}+y^{2}}\right) dN_{\sigma \left(
B^{\infty }\right) }\left( t\right) \\
&\leq &2c_{1}\int_{1}^{\infty }N_{X}\left( t\right) \frac{t\left\vert
y\right\vert }{\left( t^{2}+y^{2}\right) ^{2}}dt+2c_{2}\left\vert
l_{1}\right\vert \int_{1}^{\infty }N_{\sigma \left( B\right) }\left(
t\right) \frac{t\left\vert y\right\vert }{\left( t^{2}+y^{2}\right) ^{2}}dt
\\
&&+2c_{3}\left\vert l_{2}\right\vert \int_{1}^{\infty }N_{\sigma \left(
B^{\infty }\right) }\left( t\right) \frac{\left\vert y\right\vert t}{\left(
t^{2}+y^{2}\right) ^{2}}dt \\
&\leq &C_{0}\int_{1}^{\infty }\frac{t^{2}\left\vert y\right\vert }{\left(
t^{2}+y^{2}\right) ^{2}}dt\leq C_{0}\int_{1}^{\infty }\frac{\left\vert
y\right\vert }{t^{2}+y^{2}}dt \\
&=&\frac{C_{0}\pi }{2}-C_{0}\arctan \frac{1}{\left\vert y\right\vert }\text{
}\left( \text{if }y\neq 0\right) ,\text{ }
\end{eqnarray*}%
where $C_{0}$ is some positive constant. This directly yields $\left( \ref%
{34}\right) .$ By hypothesis (\ref{hop}) we know that there exist constants $%
t_{0}\geq 1$ and $C_{1}\geq 0$ such that%
\begin{eqnarray}
N_{X}\left( t\right) -l_{1}N_{\sigma \left( B\right) }\left( t\right)
-l_{2}N_{\sigma \left( B^{\infty }\right) }\left( t\right) &\geq &l_{3},%
\text{ }t\geq t_{0},  \label{32} \\
N_{X}\left( t\right) -l_{1}N_{\sigma \left( B\right) }\left( t\right)
-l_{2}N_{\sigma \left( B^{\infty }\right) }\left( t\right) &\geq &-C_{1},%
\text{ }t\leq t_{0}.  \label{33}
\end{eqnarray}%
Therefore, it follows from (\ref{31}), $\left( \ref{34}\right) ,$ (\ref{32})
and (\ref{33}) that
\begin{eqnarray}
&&\ln \frac{\left\vert G_{X}\left( iy\right) \right\vert }{\left\vert
G_{\sigma \left( B\right) }\left( iy\right) \right\vert ^{l_{1}}\left\vert
G_{\sigma \left( B^{\infty }\right) }\left( iy\right) \right\vert ^{l_{2}}}
\label{2.37} \\
&\geq &-C_{g}-\int_{1}^{t_{0}}\frac{y^{2}}{t^{3}+ty^{2}}C_{1}dt+%
\int_{t_{0}}^{\infty }\frac{y^{2}}{t^{3}+ty^{2}}l_{3}dt  \notag \\
&\geq &-C_{g}-\left( C_{1}+l_{3}\right) \int_{1}^{t_{0}}\frac{y^{2}}{%
t^{3}+ty^{2}}dt+l_{3}\int_{1}^{\infty }\frac{y^{2}}{t^{3}+ty^{2}}dt  \notag
\\
&=&-C_{g}+\left( C_{1}+l_{3}\right) \frac{1}{2}\ln \frac{\left(
t_{0}^{2}+y^{2}\right) }{t_{0}^{2}\left( 1+y^{2}\right) }+\frac{l_{3}}{2}\ln
\left( 1+y^{2}\right) .  \notag
\end{eqnarray}%
In addition, by $\left( \ref{delta1}\right) ,$ $\left( \ref{delta2}\right) ,$
$\left( \ref{ch1}\right) $ and $\left( \ref{ch2}\right) ,$ we infer that $\ $
\begin{equation}
\left\{
\begin{array}{l}
\left\vert G_{\sigma \left( B\right) }\left( iy\right) \right\vert =\frac{%
b_{1}}{2\left\vert C_{B}\right\vert }\left\vert y\right\vert ^{\frac{1}{2}%
}\exp \left( \left\vert \text{Im}\sqrt{iy}\right\vert \pi \right) \left(
1+o\left( 1\right) \right) , \\
\left\vert G_{\sigma \left( B^{\infty }\right) }\left( iy\right) \right\vert
=\frac{b_{1}}{2\left\vert C_{B^{\infty }}\right\vert }\exp \left( \left\vert
\text{Im}\sqrt{iy}\right\vert \pi \right) \left( 1+o\left( 1\right) \right) .%
\text{ }%
\end{array}%
\right.  \label{delte11}
\end{equation}%
Hence it turns out from $\left( \ref{2.37}\right) $ and $\left( \ref{delte11}%
\right) $ that there exists a constant $M>0$ such that%
\begin{equation*}
\left\vert G_{X}\left( iy\right) \right\vert \geq M\left\vert y\right\vert ^{%
\frac{l_{1}}{2}+l_{3}}e^{\pi \left( l_{1}+l_{2}\right) \left\vert \text{Im}%
\sqrt{iy}\right\vert }
\end{equation*}%
for sufficiently large $\left\vert y\right\vert $ and $y\in
\mathbb{R}
.$ This completes the proof.
\end{proof}

\begin{lemma}
\label{proposition}Assume that $g\left( \lambda \right) $ is an entire\
function of order less than one. If $\lim\limits_{\left\vert y\right\vert
\rightarrow \infty ;y\in
\mathbb{R}
}\left\vert g\left( iy\right) \right\vert =0,$ then $g\left( \lambda \right)
\equiv 0.$

\begin{proof}
The proof is referred to \cite{levision,ges2}.
\end{proof}
\end{lemma}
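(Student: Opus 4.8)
The plan is to derive the conclusion from the Phragm\'en--Lindel\"of principle combined with Liouville's theorem, using the hypothesis that $g$ has order less than one only to match the sub-exponential growth of $g$ to the opening angle of a half-plane.

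First I would observe that the assumption $\lim_{|y|\to\infty}|g(iy)|=0$ forces $g$ to be bounded along the imaginary axis: since $g$ is continuous and $|g(iy)|\to 0$, outside a large interval one has $|g(iy)|\le 1$, while on the remaining compact interval $g$ is bounded by continuity, so $M:=\sup_{y\in\mathbb{R}}|g(iy)|<\infty$. Next, because $g$ is entire of order $\rho<1$, I would fix an exponent $\rho'\in(\rho,1)$ and record the growth estimate $|g(\lambda)|\le C\exp(|\lambda|^{\rho'})$, valid for all $\lambda\in\mathbb{C}$ after enlarging $C$ to absorb the behaviour on any compact set.

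The core step is to apply the Phragm\'en--Lindel\"of principle separately on the closed right half-plane $\{\operatorname{Re}\lambda\ge 0\}$ and the closed left half-plane $\{\operatorname{Re}\lambda\le 0\}$. Each is a sector of opening $\pi$, for which the admissible growth exponent is precisely anything strictly smaller than $1$; since $g$ is bounded by $M$ on the common boundary (the imaginary axis) and satisfies $|g(\lambda)|\le C\exp(|\lambda|^{\rho'})$ with $\rho'<1$, the principle yields $|g(\lambda)|\le M$ throughout each half-plane. Combining the two estimates gives $|g(\lambda)|\le M$ for every $\lambda\in\mathbb{C}$, so $g$ is a bounded entire function and hence constant by Liouville's theorem. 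Evaluating this constant along the imaginary axis and invoking $g(iy)\to 0$ shows that the constant equals $0$, that is, $g\equiv 0$.

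The only delicate point, and the place where the hypothesis \emph{order less than one} is indispensable, is the compatibility of the growth exponent with the sector angle in the Phragm\'en--Lindel\"of step: for a sector of opening $\pi$ one may admit only growth of the form $\exp(|\lambda|^{\alpha})$ with $\alpha<1$, and an entire function of order exactly $1$ such as $e^{\lambda}$ (which is bounded on the imaginary axis yet not identically zero) shows that the conclusion genuinely fails without the strict inequality. Once this matching is secured, the remaining ingredients---boundedness on the boundary line and the final Liouville argument---are routine.
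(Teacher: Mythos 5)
Your proof is correct and follows essentially the standard Phragm\'en--Lindel\"of-plus-Liouville argument that the paper does not write out but delegates to its references \cite{levision,ges2}: boundedness on the imaginary axis, the half-plane Phragm\'en--Lindel\"of estimate made admissible by the order being strictly less than one, Liouville, and evaluation of the constant along $i\mathbb{R}$. One cosmetic quibble with your closing remark: $e^{\lambda}$ is bounded on the imaginary axis but does not tend to zero there, so it witnesses the failure of the Phragm\'en--Lindel\"of step (bounded on $i\mathbb{R}$ plus order one does not imply bounded) rather than being a literal counterexample to the lemma with the order hypothesis relaxed.
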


\section{Main Results and Proofs}

Our goal of this section is to give the main results of this paper. Assume
that the potential $q$ is known on $\left[ b,\pi \right] ,$ then due to the
presence of discontinuous conditions at $d\in \left( 0,\pi \right) ,$ the
uniqueness theorems are given for three cases: $b\in \left( d,\pi \right] $,
$b=d,$ and $b\in \left( 0,d\right) .$ In each case, we first study the
uniqueness problem (Theorem \ref{theorem}, Theorem \ref{theorem copy(2)},
Theorem \ref{theorem copy(4)}) when only partial information on $q$, on the
eigenvalues, and on the generalized normalizing constants is available, and
then we investigate the uniqueness problem (Theorem \ref{theorem copy(1)},
Theorem \ref{theorem copy(3)}, Theorem \ref{theorem copy(5)}) under the same
circumstances but with the normalizing constants replaced by ratios.
Unless explicitly stated otherwise, $H\ $and $d$ will be fixed$\ $in this
section. In addition, let us recall Notation \ref{615 copy(1)} and Notation %
\ref{615 copy(2)} given in the introduction.

\subsection{Case I: $q$ is known on $\left[ b,\protect\pi \right] ,$ where $%
b\in \left( d,\protect\pi \right] $}

\subsubsection{Pairs of Eigenvalues and Normalizing Constants}

\begin{Hypothesis}
\label{hypo1}Consider the subsequences $W,$ $W_{1},$ $W^{\infty },$ $%
W_{1}^{\infty }\ $satisfying
\begin{eqnarray*}
&&W_{1}<<W<<\sigma \left( B\right) ,\text{ }W_{1}<<W<<\sigma \left(
\widetilde{B}\right) , \\
&&W_{1}^{\infty }<<W^{\infty }<<\sigma \left( B^{\infty }\right)
,W_{1}^{\infty }<<W^{\infty }<<\sigma \left( \widetilde{B}^{\infty }\right)
\end{eqnarray*}%
and the following conditions:

$(1)$ for any $\lambda _{n}=\widetilde{\lambda }_{\widetilde{n}}\in \widehat{%
W_{1}}\ $where $n\in S_{B}\ $and $\widetilde{n}\in S_{\widetilde{B}},$
suppose that
\begin{equation}
m_{n}=\widetilde{m}_{\widetilde{n}},\text{ }\alpha _{n+\nu }=\widetilde{%
\alpha }_{\widetilde{n}+\nu }\text{ for }\nu =0,1,\ldots ,k_{n}-1,\text{ }
\label{hhypo1}
\end{equation}%
where $k_{n}$ equals the number of occurrences of the eigenvalue $\lambda
_{n}\ $in $W_{1};$

$(2)$ for any $\lambda _{n}^{\infty }=\widetilde{\lambda }_{\widetilde{n}%
}^{\infty }\in \widehat{W_{1}^{\infty }}\ $where $n\in S_{B^{\infty }}$ and $%
\widetilde{n}\in S_{\widetilde{B}^{\infty }},$ suppose that
\begin{equation}
m_{n}^{\infty }=\widetilde{m}_{\widetilde{n}}^{\infty },\text{ }\alpha
_{n+\gamma }^{\infty }=\widetilde{\alpha }_{\widetilde{n}+\gamma }^{\infty
}\ \text{for }\gamma =0,1,\ldots ,k_{n}^{\infty }-1,\text{ }  \label{hhypo2}
\end{equation}%
where $k_{n}^{\infty }$ equals the number of occurrences of the eigenvalue $%
\lambda _{n}^{\infty }\ $in $W_{1}^{\infty }.$
\end{Hypothesis}

\begin{theorem}
\label{theorem}Assume Hypothesis \ref{hypo1} and suppose that $q,$ $%
\widetilde{q}\in $ $C^{m}$ near $b\in $ $\left( d,\pi \right] ,$ $m\in
\mathbb{N}
_{0},$ $q=\widetilde{q}$ a.e. on $\left[ b,\pi \right] $ $($in particular,
for $b=\pi :$ $q^{\left( j\right) }(\pi )=\widetilde{q}^{\left( j\right)
}(\pi )$ for $j=0,1,\ldots ,m),$ and
\begin{eqnarray}
&&N_{W}(t)+N_{W_{1}}(t)+N_{W^{\infty }}(t)+N_{W_{1}^{\infty }}(t)
\label{hypo} \\
&\geq &AN_{\sigma \left( B\right) }(t)+\left( \frac{2b}{\pi }-A\right)
N_{\sigma \left( B^{\infty }\right) }(t)-\frac{A}{2}-\frac{m+1}{2}  \notag
\end{eqnarray}%
for sufficiently large $t\in
\mathbb{R}
.$ Then $h=\widetilde{h},$ $\beta =\widetilde{\beta },$ $\gamma =\widetilde{%
\gamma }$ and $q=\widetilde{q}$ a.e. on $\left[ 0,\pi \right] .$
\end{theorem}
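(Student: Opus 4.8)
The plan is to compare the two operators $B$ and $\widetilde{B}$ (and likewise $B^{\infty}$ and $\widetilde{B}^{\infty}$) through the entire function $F(\lambda)$ defined in \eqref{FFF}, and to show that the spectral hypotheses force $F\equiv 0$. Once $F\equiv 0$ is established, Lemma \ref{unique by F} immediately yields $q=\widetilde{q}$ a.e., $h=\widetilde{h}$, $\beta=\widetilde{\beta}$, $\gamma=\widetilde{\gamma}$ (with $d=\widetilde{d}$ already fixed), which is exactly the desired conclusion. The whole argument therefore reduces to proving that a certain entire function vanishes identically.

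First I would record the structural information about $F$. Since $b\in(d,\pi]$ and $q=\widetilde{q}$ on $[b,\pi]$, Lemma \ref{Fqh} gives the representation $F(\lambda)=\langle\varphi(x,\lambda),\widetilde{\varphi}(x,\lambda)\rangle|_{x=b}$; together with the asymptotics of Lemma \ref{jjgj} (and \eqref{faib}) this pins down the growth of $F$ along the imaginary axis, namely $|F(iy)|=O(|y|^{1/2}\exp(2b|\mathrm{Im}\sqrt{iy}|))$ as $|y|\to\infty$, and shows $F$ is entire of order $\tfrac12$. The local smoothness $q,\widetilde{q}\in C^{m}$ near $b$ with matching derivatives at $b$ (Proposition \ref{ooo copy(1)} in the appendix) should sharpen this to an extra decay factor, giving effectively $|F(iy)|=O(|y|^{1/2-(m+1)/2}\exp(2b|\mathrm{Im}\sqrt{iy}|))$. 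This is where the $-\tfrac{m+1}{2}$ term in the hypothesis \eqref{hypo} originates.

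Next I would collect the \emph{zeros} of $F$. By Lemma \ref{F}, each eigenvalue $\lambda_n\in\widehat{W}$ that is common to $B$ and $\widetilde{B}$ (with matching multiplicity) is a zero of $F$ of order at least $m_n$, and each matched normalizing constant in $\widehat{W_1}$ raises the order by one more, up to order $m_n+k_n$; the analogous statement for $B^{\infty},\widetilde{B}^{\infty}$ via the second determinantal form of $F$ in the proof of Lemma \ref{F} produces further zeros coming from $W^{\infty}$ and $W_1^{\infty}$. Counting these with multiplicity, the counting function of the zeros of $F$ is bounded below precisely by $N_{W}+N_{W_1}+N_{W^{\infty}}+N_{W_1^{\infty}}$. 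I would then form the auxiliary quotient $g(\lambda):=F(\lambda)/\bigl(\Delta(\lambda)^{A}\,\Delta^{\infty}(\lambda)^{\,2b/\pi-A}\bigr)$ — interpreting the non-integer powers through the canonical products $G_{\sigma(B)},G_{\sigma(B^{\infty})}$ so that everything stays entire — where the exponents $A$ and $2b/\pi-A$ are chosen to match the exponential type $2b$ of $F$ against the combined type $\pi(A+(2b/\pi-A))=2b$ of the denominator, so that $g$ has order strictly less than one. Applying Lemma \ref{number} with $(l_1,l_2,l_3)=(A,\,2b/\pi-A,\,-A/2-(m+1)/2)$ to the zero-sequence of $F$ gives a lower bound $|G_X(iy)|\ge M|y|^{A/2-A/2-(m+1)/2}e^{2b|\mathrm{Im}\sqrt{iy}|}$ for the denominator-normalized product, and combining this with the growth bound on $F$ shows $|g(iy)|\to0$ as $|y|\to\infty$.

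The final step is a direct application of Lemma \ref{proposition}: $g$ is entire of order less than one and $|g(iy)|\to 0$ along the imaginary axis, hence $g\equiv0$, and therefore $F\equiv0$. The main obstacle, and the step requiring the most care, is the \textbf{bookkeeping of exponents and growth rates}: one must verify that the lower bound on the zero-counting function of $F$ (assembled from $W,W_1,W^{\infty},W_1^{\infty}$ via Lemma \ref{F}) genuinely satisfies the hypothesis \eqref{hop} of Lemma \ref{number} with the stated constants, and that the resulting power of $|y|$ is negative enough — this is exactly what the $C^m$ smoothness and the $-\tfrac{m+1}{2}$ in \eqref{hypo} are engineered to guarantee — so that the quotient $g$ decays rather than merely stays bounded. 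Keeping the canonical products entire (no spurious poles from the non-integer exponent $2b/\pi-A$) and correctly matching the half-integer power $|y|^{1/2}$ coming from \eqref{faib} and \eqref{delta1} against the $\Delta^{A}$ factor is the delicate part of the estimate.
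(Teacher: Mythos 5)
Your overall strategy is the paper's: reduce to $F\equiv 0$, divide $F$ by a canonical product encoding the known common zeros, bound the quotient on the imaginary axis via Lemma \ref{number}, and invoke Lemma \ref{proposition}. But there are two concrete problems. The decisive one is your growth estimate for $F$. You take the generic bound $|F(iy)|=O\bigl(|y|^{1/2}\exp(2b|\mathrm{Im}\sqrt{iy}|)\bigr)$ and then claim the $C^{m}$ matching "sharpens" it by an extra factor $|y|^{-(m+1)/2}$, landing on $O\bigl(|y|^{1/2-(m+1)/2}\exp(2b|\mathrm{Im}\sqrt{iy}|)\bigr)$. Lemma \ref{number} with $(l_{1},l_{2},l_{3})=(A,\tfrac{2b}{\pi}-A,-\tfrac{A}{2}-\tfrac{m+1}{2})$ gives the denominator only the lower bound $M|y|^{-(m+1)/2}\exp(2b|\mathrm{Im}\sqrt{iy}|)$, so with your numerator bound the quotient is $O(|y|^{1/2})$, which does \emph{not} tend to zero, and Lemma \ref{proposition} cannot be applied. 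The hypothesis \eqref{hypo} has no $|y|^{1/2}$ of slack. What is actually needed, and what the paper proves as Lemma \ref{iy}, is $|F(iy)|=o\bigl(|y|^{-(m+1)/2}\exp(2b|\mathrm{Im}\sqrt{iy}|)\bigr)$ with no residual half power. This is not a formal "extra decay factor" on the generic bound: one must expand $F$ at $x=b$ through the transfer solutions $y_{i,d}$, $\widetilde{y}_{i,d}$ of Definition \ref{yyy}, observe that the four Wronskian-type differences in Proposition \ref{ooo copy(1)} decay at the \emph{different} rates $|\sqrt{\lambda}|^{-(m+1)}$, $|\sqrt{\lambda}|^{-(m+2)}$, $|\sqrt{\lambda}|^{-(m+2)}$, $|\sqrt{\lambda}|^{-(m+3)}$, and check that these exactly compensate the growth $O(1)$, $O(|\sqrt{\lambda}|)$, $O(|\sqrt{\lambda}|)$, $O(|\lambda|)$ of their respective coefficients $A_{1},\dots,A_{4}$. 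This cancellation is the technical heart of the theorem and is missing from your argument.

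The second problem is the denominator. The quotient $F(\lambda)/\bigl(\Delta(\lambda)^{A}\Delta^{\infty}(\lambda)^{2b/\pi-A}\bigr)$ is not an entire function for non-integer exponents, and "interpreting the powers through the canonical products" does not repair this; moreover $F$ has no reason to vanish at eigenvalues of $B$ outside $\widehat{W}\cup\widehat{W_{1}}$, so even an integer power of $\Delta$ would introduce poles. The correct object is $H=F/G_{\Xi}$, where $G_{\Xi}$ is the canonical product over the four given subsequences with multiplicities $\gamma_{n},k_{n},\gamma_{n}^{\infty},k_{n}^{\infty}$; $H$ is entire by Lemma \ref{F} together with \eqref{hhypo1}--\eqref{hhypo2}. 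The fractional exponents $A$ and $\tfrac{2b}{\pi}-A$ never appear as powers of entire functions: they enter only inside Lemma \ref{number}, in the real-valued comparison $\ln|G_{X}(iy)|-l_{1}\ln|G_{\sigma(B)}(iy)|-l_{2}\ln|G_{\sigma(B^{\infty})}(iy)|$, where non-integer $l_{1},l_{2}$ are harmless. Your own closing caveat about "spurious poles from the non-integer exponent" is pointing at exactly this unresolved issue.
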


\begin{remark}
By Remark \ref{ooo copy(4)}, we know that if $q$ and $\widetilde{q}\ $are
assumed to be in $L_{%
\mathbb{C}
}^{1}\left[ 0,\pi \right] ,$ then Theorem \ref{theorem} should be modified
by taking $m=-1.$ Thus for brevity $C^{-1}$ means $L^{1}$ throughout this
paper unless explicitly stated otherwise$.$
\end{remark}

\begin{corollary}
\label{corollary copy(1)}If $q\ $is assumed to be $C^{m}$ near $\pi ,\ $then
$h,$ $\beta ,$ $\gamma $ and $q$ on $\left[ 0,\pi \right] $ can be uniquely
determined by the values of $q^{\left( j\right) }\left( \pi \right) ,$ $%
j=0,1,\ldots ,m,$ $\left\{ \lambda _{n}\right\} _{n\in
\mathbb{N}
_{0}\backslash \Lambda _{1}}$ $($a subsequence of $\sigma \left( B\right) ),$
and $\left\{ \lambda _{n}^{\infty }\right\} _{n\in
\mathbb{N}
_{0}\backslash \Lambda _{1}^{\infty }}$ $($a subsequence of $\sigma \left(
B^{\infty }\right) ),$ where $\#\Lambda _{1}+\#\Lambda _{1}^{\infty }=$ $%
\left[ \frac{m+2}{2}\right] .$
\end{corollary}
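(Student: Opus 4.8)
The plan is to deduce this statement directly from Theorem \ref{theorem} by specializing to $b=\pi$ and by retaining no norming-constant information. I would begin by casting the assertion in the usual uniqueness form: let $B=L(q,h,H,\beta,\gamma,d)$ and $\widetilde{B}=L(\widetilde{q},\widetilde{h},H,\widetilde{\beta},\widetilde{\gamma},d)$ be two problems sharing the prescribed data, namely $q^{(j)}(\pi)=\widetilde{q}^{(j)}(\pi)$ for $j=0,1,\ldots,m$, together with $\lambda_n=\widetilde{\lambda}_n$ for all $n\in\mathbb{N}_0\backslash\Lambda_1$ and $\lambda_n^\infty=\widetilde{\lambda}_n^\infty$ for all $n\in\mathbb{N}_0\backslash\Lambda_1^\infty$. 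It then suffices to show that $h=\widetilde{h}$, $\beta=\widetilde{\beta}$, $\gamma=\widetilde{\gamma}$ and $q=\widetilde{q}$ a.e. on $[0,\pi]$. Note that for $b=\pi$ the hypothesis ``$q=\widetilde{q}$ a.e. on $[b,\pi]$'' of Theorem \ref{theorem} is exactly the matching of the derivatives $q^{(j)}(\pi)$, $j=0,\ldots,m$.

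Next I would select the four subsequences in Hypothesis \ref{hypo1} to carry the available data and nothing more, namely
$$W:=\{\lambda_n\}_{n\in\mathbb{N}_0\backslash\Lambda_1},\quad W^\infty:=\{\lambda_n^\infty\}_{n\in\mathbb{N}_0\backslash\Lambda_1^\infty},\quad W_1:=\emptyset,\quad W_1^\infty:=\emptyset.$$
Since the retained eigenvalues belong to both $\sigma(B)$ and $\sigma(\widetilde{B})$ (respectively to both $\sigma(B^\infty)$ and $\sigma(\widetilde{B}^\infty)$), the inclusion relations $W_1<<W<<\sigma(B)$, $W<<\sigma(\widetilde{B})$ and their $B^\infty$ analogues all hold. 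Moreover, conditions $(1)$ and $(2)$ of Hypothesis \ref{hypo1} become vacuous, because $\widehat{W_1}=\widehat{W_1^\infty}=\emptyset$ and hence no multiplicity or generalized normalizing constant is required to match; this reflects the fact that the corollary uses eigenvalue data only.

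The substantive step is to check the counting inequality (\ref{hypo}) with the parameter choice $A=1$ and $b=\pi$. As $\Lambda_1$ and $\Lambda_1^\infty$ are finite, for all sufficiently large $t$ one has $N_W(t)=N_{\sigma(B)}(t)-\#\Lambda_1$ and $N_{W^\infty}(t)=N_{\sigma(B^\infty)}(t)-\#\Lambda_1^\infty$, while $N_{W_1}(t)=N_{W_1^\infty}(t)=0$. Using $2b/\pi=2$ and $A=1$, the right-hand side of (\ref{hypo}) equals $N_{\sigma(B)}(t)+N_{\sigma(B^\infty)}(t)-\tfrac12-\tfrac{m+1}{2}$, so (\ref{hypo}) collapses to the purely numerical inequality $\#\Lambda_1+\#\Lambda_1^\infty\le\tfrac{m+2}{2}$, which holds because $\#\Lambda_1+\#\Lambda_1^\infty=\left[\tfrac{m+2}{2}\right]\le\tfrac{m+2}{2}$. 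An application of Theorem \ref{theorem} then delivers $h=\widetilde{h}$, $\beta=\widetilde{\beta}$, $\gamma=\widetilde{\gamma}$ and $q=\widetilde{q}$ a.e. on $[0,\pi]$.

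Being a corollary, the argument presents no deep obstacle; the one point that requires care is the choice $A=1$. Both $N_{\sigma(B)}(t)$ and $N_{\sigma(B^\infty)}(t)$ share the same leading growth, so for $A\neq1$ the remainder $(1-A)\left(N_{\sigma(B)}(t)-N_{\sigma(B^\infty)}(t)\right)$ would survive with an unfavourable sign and could not be absorbed into the constant terms; setting $A=1$ cancels it and leaves exactly the admissible budget $\tfrac{m+2}{2}$ for the total number $\#\Lambda_1+\#\Lambda_1^\infty$ of omitted eigenvalues, which is why the threshold $\left[\tfrac{m+2}{2}\right]$ is the natural one for this scheme.
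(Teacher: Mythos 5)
Your derivation is correct and is exactly the intended one: the paper states this corollary without a separate proof as an immediate specialization of Theorem \ref{theorem} with $b=\pi$, $W_{1}=W_{1}^{\infty }=\emptyset$, and $A=1$, under which Hypothesis \ref{hypo1} becomes vacuous and the counting condition $\left( \ref{hypo}\right) $ reduces to $\#\Lambda _{1}+\#\Lambda _{1}^{\infty }\leq \frac{m+2}{2}$, as you verify.
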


\begin{corollary}
\label{corollary}Assume that $q\ $is $C^{m}$ near $\pi \ $and$\ $the values
of $q^{\left( j\right) }\left( \pi \right) ,$ $j=0,1,\ldots ,m,$\ are known
a priori. Then $h,$ $\beta ,$ $\gamma $ and $q$ on $\left[ 0,\pi \right] $
can be uniquely determined by the following information $(1)$ or $(2):$

$(1)$ all the eigenvalues $\left\{ \lambda _{n}\right\} _{n\in
\mathbb{N}
_{0}}$ of $B$ and a subsequence of the normalizing constants $\left\{ \alpha
_{n+\nu }\right\} _{n\in S_{B}\backslash \Lambda }^{\nu =0,1,\ldots ,k_{n}},$
where $0\leq k_{n}\leq m_{n}-1,$ $\Lambda \subset S_{B}$ and $%
\sum\limits_{n\in \Lambda }m_{n}+\sum\limits_{n\in S_{B}\backslash \Lambda
}\left( m_{n}-k_{n}-1\right) =\left[ \frac{m+3}{2}\right] ;$

$(2)$ all the eigenvalues $\left\{ \lambda _{n}^{\infty }\right\} _{n\in
\mathbb{N}
_{0}}$ of $B^{\infty }$ and a subsequence of the normalizing constants $%
\left\{ \alpha _{n+\nu }^{\infty }\right\} _{n\in s_{B^{\infty }}\backslash
\Lambda ^{\infty }}^{\nu =0,1,\ldots ,k_{n}^{\infty }},$ where $0\leq
k_{n}^{\infty }\leq m_{n}^{\infty }-1,$ $\Lambda ^{\infty }\subset
S_{B^{\infty }}$ and $\sum\limits_{n\in \Lambda ^{\infty }}m_{n}^{\infty
}+\sum\limits_{n\in S_{B^{\infty }}\backslash \Lambda ^{\infty }}\left(
m_{n}^{\infty }-k_{n}^{\infty }-1\right) =\left[ \frac{m+1}{2}\right] .$
\end{corollary}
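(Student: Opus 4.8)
The plan is to obtain Corollary \ref{corollary} as a specialization of Theorem \ref{theorem} to the endpoint case $b=\pi$ (so that $\tfrac{2b}{\pi}=2$), choosing the free constant $A$ and the four subsequences $W,W_1,W^{\infty},W_1^{\infty}$ of Hypothesis \ref{hypo1} so that the information they encode is exactly the data listed in $(1)$, respectively $(2)$. I read the statement in the usual uniqueness form: given a second problem $\widetilde{B}$ $(\widetilde{B}^{\infty})$ sharing the prescribed spectral data, and with $q^{(j)}(\pi)=\widetilde{q}^{(j)}(\pi)$ for $j=0,\ldots,m$ (automatic, since these values are fixed a priori), I must verify the hypotheses of Theorem \ref{theorem} and thereby conclude $h=\widetilde{h}$, $\beta=\widetilde{\beta}$, $\gamma=\widetilde{\gamma}$, $q=\widetilde{q}$ a.e.

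For $(1)$ I would take $A=2$, $W=\sigma(B)$ (all eigenvalues known, hence common to $B$ and $\widetilde{B}$ with matching algebraic multiplicities, so that $m_n=\widetilde{m}_{\widetilde{n}}$ is automatic), and $W^{\infty}=W_1^{\infty}=\emptyset$. I build $W_1<<W$ by prescribing that each $\lambda_n$ with $n\in S_B\setminus\Lambda$ occur exactly $k_n+1$ times in $W_1$ and each $\lambda_n$ with $n\in\Lambda$ occur zero times. Then the occurrence count of $\lambda_n$ in $W_1$ is $k_n+1$, so Hypothesis \ref{hypo1}$(1)$ asks precisely for $\alpha_{n+\nu}=\widetilde{\alpha}_{\widetilde{n}+\nu}$, $\nu=0,\ldots,k_n$, which is the content of the known subsequence $\{\alpha_{n+\nu}\}^{\nu=0,\ldots,k_n}_{n\in S_B\setminus\Lambda}$; condition $(2)$ of the hypothesis is vacuous. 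It remains to verify (\ref{hypo}). With these choices its left-hand side is $N_{\sigma(B)}(t)+N_{W_1}(t)$ and, using $1+\tfrac{m+1}{2}=\tfrac{m+3}{2}$, the inequality is equivalent to
\[
N_{\sigma(B)}(t)-N_{W_1}(t)=\sum_{n\in\Lambda,\,|\lambda_n|<t}m_n+\sum_{n\in S_B\setminus\Lambda,\,|\lambda_n|<t}(m_n-k_n-1)\;\le\;\tfrac{m+3}{2}.
\]
For large $t$ the middle quantity stabilizes to $\sum_{n\in\Lambda}m_n+\sum_{n\in S_B\setminus\Lambda}(m_n-k_n-1)=\big[\tfrac{m+3}{2}\big]$ by the hypothesis of the corollary, and $\big[\tfrac{m+3}{2}\big]\le\tfrac{m+3}{2}$ trivially; hence (\ref{hypo}) holds and Theorem \ref{theorem} gives the conclusion.

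For $(2)$ I would argue symmetrically with $A=0$, taking $W=W_1=\emptyset$, $W^{\infty}=\sigma(B^{\infty})$, and building $W_1^{\infty}$ so that each $\lambda_n^{\infty}$ with $n\in S_{B^{\infty}}\setminus\Lambda^{\infty}$ occurs $k_n^{\infty}+1$ times. The same reduction turns (\ref{hypo}) into $N_{\sigma(B^{\infty})}(t)-N_{W_1^{\infty}}(t)\le\tfrac{m+1}{2}$, whose large-$t$ left side equals $\big[\tfrac{m+1}{2}\big]\le\tfrac{m+1}{2}$, again valid.

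The essential point is the dictionary between the \emph{occurrence multiplicity of an eigenvalue in} $W_1$ and the \emph{number of known normalizing constants} $\alpha_{n+\nu}$ attached to it, which turns "omitting information" into a subtraction of counting functions. The only genuine computation is matching the deficiency $\sum_{n\in\Lambda}m_n+\sum_{n\in S_B\setminus\Lambda}(m_n-k_n-1)$ against the budget $\big[\tfrac{m+3}{2}\big]$ (respectively $\big[\tfrac{m+1}{2}\big]$) appearing in (\ref{hypo}); I expect this index bookkeeping, rather than any analytic difficulty, to be the only place requiring care, since all the hard analysis is already packaged inside Theorem \ref{theorem}.
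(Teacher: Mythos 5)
Your proposal is correct and is exactly the intended derivation: the paper gives no separate proof of this corollary, presenting it as a direct specialization of Theorem \ref{theorem} with $b=\pi$, and your choices ($A=2$, $W=\sigma(B)$, $W_1$ containing each $\lambda_n$ with $n\in S_B\setminus\Lambda$ exactly $k_n+1$ times, empty $W^{\infty},W_1^{\infty}$ for part $(1)$; the symmetric choices with $A=0$ for part $(2)$) make the counting condition (\ref{hypo}) reduce precisely to the stated deficiency budgets $\left[\frac{m+3}{2}\right]$ and $\left[\frac{m+1}{2}\right]$. The bookkeeping identifying the occurrence multiplicity in $W_1$ with the number of known normalizing constants, and the verification that the nonnegative deficiency terms $m_n-k_n-1\geq 0$ make the partial sums monotone and bounded by the total, are both sound.
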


\begin{remark}
Suppose that {$b_{1}=\dfrac{\beta +\beta ^{-1}}{2}$ is} known a priori. Then
from $\left( \ref{delta1}\right) $, $\left( \ref{delta2}\right) ,$ $\left( %
\ref{ch1}\right) $ and $\left( \ref{ch2}\right) $, one deduces that $\Delta
\left( \lambda \right) $ and $\Delta ^{\infty }\left( \lambda \right) $ can
be uniquely determined by $\sigma \left( B\right) $ and $\sigma \left(
B^{\infty }\right) ,$ respectively$;$ thus by $\left( \ref{24}\right) \ $and
$\left( \ref{63}\right) ,$ we know that Corollary \ref{corollary} remains
valid if the conditions on the normalizing constants $\left\{ \alpha _{n+\nu
}\right\} _{n\in S_{B}\backslash \Lambda }^{\nu =0,1,\ldots ,k_{n}}$ and $%
\left\{ \alpha _{n+\nu }^{\infty }\right\} _{n\in S_{B^{\infty }}\backslash
\Lambda ^{\infty }}^{\nu =0,1,\ldots ,k_{n}^{\infty }}$ are replaced by the
conditions on the ratios $\left\{ \kappa _{n+\nu }\right\} _{n\in
S_{B}\backslash \Lambda }^{\nu =0,1,\ldots ,k_{n}}$ and $\left\{ \kappa
_{n+\nu }^{\infty }\right\} _{n\in S_{B^{\infty }}\backslash \Lambda
^{\infty }}^{\nu =0,1,\ldots ,k_{n}^{\infty }},$ respectively.

\begin{corollary}
\label{corollary copy(2)}Let $d\in \left( 0,\frac{\pi }{2}\right).$ Assume
that $q$ is $C^{m}$ near $\frac{\pi }{2}\ $and $q\ $on $\left[ \frac{\pi }{2}%
,\pi \right] \ $are known a priori. Then $h,$ $\beta ,$ $\gamma $ and $q$ on
$\left[ 0,\pi \right] $ can be uniquely determined by all the eigenvalues $%
\left\{ \lambda _{n}\right\} _{n\in
\mathbb{N}
_{0}}$ of $B$ except for $\left( \left[ \frac{m+2}{2}\right] \right) ,$ or
all the eigenvalues $\left\{ \lambda _{n}^{\infty }\right\} _{n\in
\mathbb{N}
_{0}}$ of $B^{\infty }$ except for $\left( \left[ \frac{m+1}{2}\right]
\right) $.
\end{corollary}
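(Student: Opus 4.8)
The plan is to derive Corollary \ref{corollary copy(2)} as a direct specialization of Theorem \ref{theorem} to the endpoint $b=\pi/2$. Since $d\in\left(0,\frac{\pi}{2}\right)$, the point $b=\pi/2$ lies in $\left(d,\pi\right)$, so we are squarely in Case I. First I would introduce a competing operator $\widetilde{B}$ whose coefficients produce the same partial data, and note that the a priori knowledge of $q$ on $\left[\frac{\pi}{2},\pi\right]$ forces $\widetilde{q}=q$ a.e.\ there; together with the hypothesis that both $q$ and $\widetilde{q}$ are $C^{m}$ near $\frac{\pi}{2}$, this verifies the smoothness-and-matching requirements of Theorem \ref{theorem}. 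The decisive feature of the choice $b=\pi/2$ is that $\frac{2b}{\pi}=1$, so the counting hypothesis $(\ref{hypo})$ reduces to
\[
N_{W}(t)+N_{W_{1}}(t)+N_{W^{\infty}}(t)+N_{W_{1}^{\infty}}(t)\geq A\,N_{\sigma\left(B\right)}(t)+\left(1-A\right)N_{\sigma\left(B^{\infty}\right)}(t)-\frac{A}{2}-\frac{m+1}{2}.
\]

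For the first alternative I would take $W_{1}=W^{\infty}=W_{1}^{\infty}=\emptyset$ and let $W$ be $\sigma\left(B\right)$ with its $\left[\frac{m+2}{2}\right]$ omitted eigenvalues deleted, and then choose $A=1$. The right-hand side collapses to $N_{\sigma\left(B\right)}(t)-\frac{m+2}{2}$, while deleting at most $\left[\frac{m+2}{2}\right]$ points gives $N_{W}(t)\geq N_{\sigma\left(B\right)}(t)-\left[\frac{m+2}{2}\right]\geq N_{\sigma\left(B\right)}(t)-\frac{m+2}{2}$ for all $t$, so $(\ref{hypo})$ holds. For the second alternative I would instead take $W=W_{1}=W_{1}^{\infty}=\emptyset$ and let $W^{\infty}$ be $\sigma\left(B^{\infty}\right)$ with its $\left[\frac{m+1}{2}\right]$ omitted eigenvalues deleted, and choose $A=0$; now the right-hand side collapses to $N_{\sigma\left(B^{\infty}\right)}(t)-\frac{m+1}{2}$ and $N_{W^{\infty}}(t)\geq N_{\sigma\left(B^{\infty}\right)}(t)-\left[\frac{m+1}{2}\right]$ settles the inequality. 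In both cases Hypothesis \ref{hypo1} is satisfied, its conditions on $W_{1}$ and $W_{1}^{\infty}$ being vacuous since those subsequences are empty, so no information on multiplicities or generalized normalizing constants is required.

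Having verified $(\ref{hypo})$, I would invoke Theorem \ref{theorem} to conclude $h=\widetilde{h}$, $\beta=\widetilde{\beta}$, $\gamma=\widetilde{\gamma}$ and $q=\widetilde{q}$ a.e.\ on $\left[0,\pi\right]$, which is exactly the asserted uniqueness (recall that $q$ is already prescribed on $\left[\frac{\pi}{2},\pi\right]$, so the genuine content is the recovery of $q$ on $\left[0,\frac{\pi}{2}\right]$ together with $h,\beta,\gamma$). I do not expect a substantive obstacle, since Theorem \ref{theorem} is used as a black box; the only point requiring care is the floor-function bookkeeping, namely that $\frac{2b}{\pi}=1$ allows $A=1$ and $A=0$ to annihilate the unused spectrum's contribution, and that the elementary inequalities $\left[\frac{m+2}{2}\right]\leq\frac{m+2}{2}$ and $\left[\frac{m+1}{2}\right]\leq\frac{m+1}{2}$ absorb the constant term $-\frac{A}{2}-\frac{m+1}{2}$. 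The main risk is simply mismatching the number of omittable eigenvalues with that constant term for the chosen value of $A$.
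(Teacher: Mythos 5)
Your proposal is correct and follows exactly the route the paper intends: Corollary \ref{corollary copy(2)} is an immediate specialization of Theorem \ref{theorem} to $b=\pi/2\in(d,\pi]$, with $A=1$ (resp.\ $A=0$) and $W_{1}=W_{1}^{\infty}=\emptyset$ so that no normalizing constants are needed, and the counting condition $(\ref{hypo})$ absorbs the $\left[\frac{m+2}{2}\right]$ (resp.\ $\left[\frac{m+1}{2}\right]$) missing eigenvalues precisely as you compute. The floor-function bookkeeping is right, so nothing further is required.
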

\end{remark}

To prove Theorem \ref{theorem}, we first give a lemma on $F\left( \lambda
\right) $ defined by $\left( \ref{FFF}\right) .$

\begin{lemma}
\label{iy}Assume that $q,$ $\widetilde{q}\in $ $C^{m}\ $near $b\in \left(
d,\pi \right] ,$ $q$ $=\tilde{q}$ a.e. on $\left[ b,\pi \right] $ $($in
particular, for $b=\pi :$ $q^{\left( j\right) }(\pi )=\widetilde{q}^{\left(
j\right) }(\pi )$ for $j=0,1,\ldots ,m).$ Then one observes that
\begin{equation*}
\left\vert F\left( iy\right) \right\vert =o\left( \left\vert y\right\vert ^{-%
\frac{m+1}{2}}\exp \left( 2\left\vert \text{Im}\sqrt{iy}\right\vert b\right)
\right) \ \text{as }y\text{ }\left( \text{real}\right) \rightarrow \infty .
\end{equation*}
\end{lemma}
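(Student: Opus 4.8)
The starting point is Lemma~\ref{Fqh}. Since $d=\widetilde d$ (fixed throughout this section) and $q=\widetilde q$ a.e.\ on $[b,\pi]$ with $b\in(d,\pi]$, that lemma reduces the defining expression \eqref{FFF} of $F$ to a single Wronskian evaluated at $x=b$,
\[
F(\lambda)=\left.\langle\varphi(x,\lambda),\widetilde\varphi(x,\lambda)\rangle\right|_{x=b}=\varphi(b,\lambda)\widetilde\varphi'(b,\lambda)-\varphi'(b,\lambda)\widetilde\varphi(b,\lambda).
\]
It is worth noting that $\varphi$ and $\widetilde\varphi$ both solve $-y''+qy=\lambda y$ on $[b,\pi]$, so this Wronskian is in fact independent of $x\in[b,\pi]$; this constancy is exactly what will prevent $F$ from growing like the worst-case product of two solutions. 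Everything is thus reduced to the behaviour of $\varphi,\varphi',\widetilde\varphi,\widetilde\varphi'$ at the single point $x=b$ along the ray $\lambda=iy$.

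Next I would insert the asymptotics of the fundamental solutions at $x=b>d$. Writing $s=\sqrt\lambda$, the leading terms are those of Lemma~\ref{jjgj}, and since $q,\widetilde q\in C^{m}$ near $b$ the expansion can be continued to order $m+1$ in $1/s$ with a controlled remainder via Proposition~\ref{ooo copy(1)}. Substituting the leading terms $b_1\cos(sb)+b_2\cos(s(2d-b))$ for $\varphi(b)$, the matching expression for $\varphi'(b)$, and their tilded analogues into the Wronskian, the angle-addition identities make the $b_1\widetilde b_1$- and $b_2\widetilde b_2$-contributions drop out and leave
\[
F(\lambda)=\sqrt\lambda\,(b_1\widetilde b_2-b_2\widetilde b_1)\sin(2\sqrt\lambda\,d)+(\text{lower-order terms}),
\]
whose modulus on $\lambda=iy$ is of order $|y|^{1/2}\exp(2|\mathrm{Im}\sqrt{iy}|\,d)$.

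The decisive point is that this leading contribution, and every surviving contribution, carries the exponential weight $\exp(2|\mathrm{Im}\sqrt{iy}|\,d)$ rather than $\exp(2|\mathrm{Im}\sqrt{iy}|\,b)$. On the imaginary axis the dominant exponential inside each solution is the one attached to $e^{-i\sqrt\lambda\,b}$ (of size $\exp(|\mathrm{Im}\sqrt\lambda|\,b)$), so the only products that could reach the critical size $\exp(2|\mathrm{Im}\sqrt\lambda|\,b)$ are those pairing the $e^{-i\sqrt\lambda\,b}$-part of $\varphi$ with the $e^{-i\sqrt\lambda\,b}$-part of $\widetilde\varphi'$ and vice versa. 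Because the local amplitude multiplying $e^{-i\sqrt\lambda\,b}$ in the expansion of a solution at $x=b$ is determined solely by $q$ and its derivatives near $b$, and $q=\widetilde q$ there, this amplitude is common to $\varphi$ and $\widetilde\varphi$ through order $m+1$; consequently the coefficient of $e^{-2i\sqrt\lambda\,b}$ in the antisymmetric combination cancels. The cleanest way to see that no term of size $\exp(2|\mathrm{Im}\sqrt\lambda|\,b)$ can occur at all is the $x$-constancy of $\langle\varphi,\widetilde\varphi\rangle$ noted above, together with the bound $\exp(2|\mathrm{Im}\sqrt\lambda|\,d)$ on the amplitudes produced by propagating the initial data through the jump at $d$. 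Since $b>d$, the factor $\exp(2|\mathrm{Im}\sqrt\lambda|(d-b))$ decays faster than any power of $|y|$, so $|y|^{1/2}\exp(2|\mathrm{Im}\sqrt{iy}|\,d)=o\!\left(|y|^{-(m+1)/2}\exp(2|\mathrm{Im}\sqrt{iy}|\,b)\right)$, and the same is true of the remainder terms; this is the asserted estimate.

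I expect the main obstacle to be the cancellation of the critical exponential order: one must confirm that the products of the dominant $e^{-i\sqrt\lambda\,b}$-parts, which are individually of size $\exp(2|\mathrm{Im}\sqrt\lambda|\,b)$, genuinely annihilate each other and that this is not destroyed by the extra power of $\sqrt\lambda$ lost when differentiating to form $\varphi'$ nor by the remainder in the expansion. The antisymmetry of the Wronskian together with the equality of the local amplitudes of $\varphi$ and $\widetilde\varphi$ near $b$ are what make this work; once the critical order is removed, the strict inequality $d<b$ supplies the decay for free, and the role of the $C^{m}$ hypothesis is to carry the expansion far enough that the residual is $o(|y|^{-(m+1)/2})$ relative to $\exp(2|\mathrm{Im}\sqrt{iy}|\,b)$.
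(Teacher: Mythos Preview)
Your opening reduction via Lemma~\ref{Fqh} and the leading-order computation
\[
F(\lambda)=\sqrt\lambda\,(b_1\widetilde b_2-b_2\widetilde b_1)\sin(2\sqrt\lambda\,d)+\text{(lower order)}
\]
are both correct. The gap is in how you dispose of the lower-order terms. You assert that every surviving contribution carries only the exponential weight $\exp(2|\mathrm{Im}\sqrt\lambda|\,d)$, so that $d<b$ gives the result for free. This is not so. The remainders hidden in the $O(\cdot)$ of Lemma~\ref{jjgj} are of genuine size $\exp(|\mathrm{Im}\sqrt\lambda|\,b)$ for each factor, hence $\exp(2|\mathrm{Im}\sqrt\lambda|\,b)$ for their products, and since $q$ and $\widetilde q$ need not agree on $[d,b]$ these products do \emph{not} cancel in the Wronskian. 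Your two justifications both fail: the constancy of $\langle\varphi,\widetilde\varphi\rangle$ holds only on $[b,\pi]$ (where $q=\widetilde q$), not on $[d,b]$, so it cannot transport a bound from $d$ to $b$; and the ``local amplitude'' of $e^{-i\sqrt\lambda\,b}$ in $\varphi(b,\lambda)$ is \emph{not} determined solely by $q$ near $b$---it depends on $q$ over all of $[0,b]$, so there is no reason for it to match the amplitude in $\widetilde\varphi(b,\lambda)$ beyond leading order. A telltale sign that something is wrong is that your argument, if valid, would give $|F(iy)|=o(|y|^{-N}\exp(2|\mathrm{Im}\sqrt{iy}|\,b))$ for \emph{every} $N$, rendering the $C^m$ hypothesis irrelevant; but the exponent $-(m+1)/2$ is exactly what the smoothness buys.

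The paper's proof meets this difficulty head-on by writing $\varphi$ and $\widetilde\varphi$ on $(d,b]$ as linear combinations of the fundamental solutions $y_{i,d},\widetilde y_{i,d}$ based at $d$, with coefficients $A_1,\dots,A_4$ depending only on data at $d-0$ (hence of size $\exp(2|\mathrm{Im}\sqrt\lambda|\,d)$ times powers). The Wronskian at $b$ then becomes a combination of the four mixed Wronskians $y_{i,d}\widetilde y_{j,d}'-y_{i,d}'\widetilde y_{j,d}$ evaluated at $b$, and it is precisely Proposition~\ref{ooo copy(1)}---whose proof uses the $C^m$ matching of $q$ and $\widetilde q$ at $b$ in an essential way---that shows each of these is $o(|\sqrt\lambda|^{-(m+1+\ast)}\exp(2|\mathrm{Im}\sqrt\lambda|(b-d)))$. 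Multiplying by the $A_i$ gives the claimed estimate. You should rework the argument along these lines: the decomposition at $d$ is what separates the part that \emph{does} grow like $\exp(2|\mathrm{Im}\sqrt\lambda|(b-d))$ from the coefficients, and Proposition~\ref{ooo copy(1)} is where the $C^m$ hypothesis actually enters.
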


\begin{proof}
Recall Definition $\ref{yyy}$ (in the Appendix) for the functions $%
y_{i,d}(x,\lambda )$ and $\widetilde{y}_{i,d}(x,\lambda ),$ $i=1,2.$ Then
from Lemma \ref{Fqh} we know that for $b\in \left( d,\pi \right] ,$%
\begin{eqnarray*}
F\left( \lambda \right) &=&\varphi \left( b,\lambda \right) \widetilde{%
\varphi }^{\prime }\left( b,\lambda \right) -\varphi ^{\prime }\left(
b,\lambda \right) \widetilde{\varphi }\left( b,\lambda \right) \\
&=&\left[ \beta \varphi \left( d-0,\lambda \right) y_{1,d}(b,\lambda
)+\left( \beta ^{-1}\varphi ^{\prime }\left( d-0,\lambda \right) +\gamma
\varphi \left( d-0,\lambda \right) \right) y_{2,d}(b,\lambda )\right] \times
\\
&&\left[ \widetilde{\beta }\widetilde{\varphi }\left( d-0,\lambda \right)
\widetilde{y}_{1,d}^{\prime }(b,\lambda )+\left( \widetilde{\beta }^{-1}%
\widetilde{\varphi }^{\prime }\left( d-0,\lambda \right) +\widetilde{\gamma }%
\widetilde{\varphi }\left( d-0,\lambda \right) \right) \widetilde{y}%
_{2,d}^{\prime }(b,\lambda )\right] \\
&&-\left[ \beta \varphi \left( d-0,\lambda \right) y_{1,d}^{\prime
}(b,\lambda )+\left( \beta ^{-1}\varphi ^{\prime }\left( d-0,\lambda \right)
+\gamma \varphi \left( d-0,\lambda \right) \right) y_{2,d}^{\prime
}(b,\lambda )\right] \times \\
&&\left[ \widetilde{\beta }\widetilde{\varphi }\left( d-0,\lambda \right)
\widetilde{y}_{1,d}(b,\lambda )+\left( \widetilde{\beta }^{-1}\widetilde{%
\varphi }^{\prime }\left( d-0,\lambda \right) +\widetilde{\gamma }\widetilde{%
\varphi }\left( d-0,\lambda \right) \right) \widetilde{y}_{2,d}(b,\lambda )%
\right] \\
&=&A_{1}(\lambda )\left[ y_{1,d}(b,\lambda )\widetilde{y}_{1,d}^{\prime
}(b,\lambda )-y_{1d}^{\prime }(b,\lambda )\widetilde{y}_{1,d}(b,\lambda )%
\right] \\
&&+A_{2}(\lambda )\left[ y_{1,d}(b,\lambda )\widetilde{y}_{2,d}^{\prime
}(b,\lambda )-y_{1,d}^{\prime }(b,\lambda )\widetilde{y}_{2,d}(b,\lambda )%
\right] \\
&&+A_{3}(\lambda )\left[ \widetilde{y}_{1,d}^{\prime }(b,\lambda
)y_{2,d}(b,\lambda )-\widetilde{y}_{1,d}(b,\lambda )y_{2,d}^{\prime
}(b,\lambda )\right] \\
&&+A_{4}(\lambda )\left[ y_{2,d}(b,\lambda )\widetilde{y}_{2,d}^{\prime
}(b,\lambda )-y_{2,d}^{\prime }(b,\lambda )\widetilde{y}_{2,d}(b,\lambda )%
\right] ,
\end{eqnarray*}%
where
\begin{eqnarray*}
A_{1}(\lambda ) &=&\beta \widetilde{\beta }\varphi \left( d-0,\lambda
\right) \widetilde{\varphi }\left( d-0,\lambda \right) =O\left( \exp \left(
2\left\vert \text{Im}\sqrt{\lambda }\right\vert d\right) \right) , \\
A_{2}(\lambda ) &=&\beta \varphi \left( d-0,\lambda \right) \left(
\widetilde{\beta }^{-1}\widetilde{\varphi }^{\prime }\left( d-0,\lambda
\right) +\widetilde{\gamma }\widetilde{\varphi }\left( d-0,\lambda \right)
\right) \\
&=&O\left( \sqrt{\left\vert \lambda \right\vert }\exp \left( 2\left\vert
\text{Im}\sqrt{\lambda }\right\vert d\right) \right) , \\
A_{3}(\lambda ) &=&\widetilde{\beta }\widetilde{\varphi }\left( d-0,\lambda
\right) \left( \beta ^{-1}\varphi ^{\prime }\left( d-0,\lambda \right)
+\gamma \varphi \left( d-0,\lambda \right) \right) \\
&=&O\left( \sqrt{\left\vert \lambda \right\vert }\exp \left( 2\left\vert
\text{Im}\sqrt{\lambda }\right\vert d\right) \right) , \\
A_{4}(\lambda ) &=&\left( \widetilde{\beta }^{-1}\widetilde{\varphi }%
^{\prime }\left( d-0,\lambda \right) +\widetilde{\gamma }\widetilde{\varphi }%
\left( d-0\right) \right) \left( \beta ^{-1}\varphi ^{\prime }\left(
d-0,\lambda \right) +\gamma \varphi \left( d-0,\lambda \right) \right) \\
&=&O\left( \left\vert \lambda \right\vert \exp \left( 2\left\vert \text{Im}%
\sqrt{\lambda }\right\vert d\right) \right) .
\end{eqnarray*}%
as $\left\vert \lambda \right\vert \rightarrow \infty .$ Note that the
asymptotics of $A_{1},$ $A_{2},$ $A_{3}$ and $A_{4}$ can be directly
obtained by Lemma \ref{jjgj}. Hence from Proposition \ref{ooo copy(1)} it
follows that as $y$ $\left( \text{real}\right) \rightarrow \infty ,$
\begin{eqnarray*}
&&\left\vert F\left( iy\right) \right\vert \\
&\leq &\left\vert A_{1}(iy)\right\vert o\left( \frac{\exp \left( 2\left\vert
\text{Im}\sqrt{iy}\right\vert \left( b-d\right) \right) }{\left\vert \sqrt{iy%
}\right\vert ^{m+1}}\right) +\left\vert A_{2}(iy)\right\vert o\left( \frac{%
\exp \left( 2\left\vert \text{Im}\sqrt{iy}\right\vert \left( b-d\right)
\right) }{\left\vert \sqrt{iy}\right\vert ^{m+2}}\right) \\
&&+\left\vert A_{3}(iy)\right\vert o\left( \frac{\exp \left( 2\left\vert
\text{Im}\sqrt{iy}\right\vert \left( b-d\right) \right) }{\left\vert \sqrt{iy%
}\right\vert ^{m+2}}\right) +\left\vert A_{4}(iy)\right\vert o\left( \frac{%
\exp \left( 2\left\vert \text{Im}\sqrt{iy}\right\vert \left( b-d\right)
\right) }{\left\vert \sqrt{iy}\right\vert ^{m+3}}\right) \\
&=&o\left( \left\vert y\right\vert ^{-\frac{m+1}{2}}\exp \left( 2\left\vert
\text{Im}\sqrt{iy}\right\vert b\right) \right) .
\end{eqnarray*}%
This completes the proof.
\end{proof}

Now we turn to prove Theorem \ref{theorem}.

\begin{proof}[Proof of Theorem \protect\ref{theorem}]
For any $\lambda _{n}\in \widehat{W}\ $and $\lambda _{n}^{\infty }\in
\widehat{W^{\infty }}\ $where $n\in S_{B}\ $and $n\in S_{B^{\infty }},$ let $%
\gamma _{n}$ and $\gamma _{n}^{\infty }$ denote the number of occurrences of
$\lambda _{n}$ in $W\ $and $\lambda _{n}^{\infty }$ in $W^{\infty },$
respectively$.$ Denote
\begin{equation}
H\left( \lambda \right) :=\frac{F\left( \lambda \right) }{G_{\Xi }\left(
\lambda \right) },  \label{Hlamuda}
\end{equation}%
where
\begin{equation}
G_{\Xi }\left( \lambda \right) :=G_{W}\left( \lambda \right) G_{W_{1}}\left(
\lambda \right) G_{W^{\infty }}\left( \lambda \right) G_{W_{1}^{\infty
}}\left( \lambda \right) ,  \label{G3}
\end{equation}%
\begin{eqnarray*}
&&G_{W}\left( \lambda \right) :=\prod\limits_{\lambda _{n}\in \widehat{W}%
,n\in S_{B}}\left( 1-\frac{\lambda }{\lambda _{n}}\right) ^{\gamma
_{n}},G_{W_{1}}\left( \lambda \right) :=\prod\limits_{\lambda _{n}\in
\widehat{W_{1}},n\in S_{B}}\left( 1-\frac{\lambda }{\lambda _{n}}\right)
^{k_{n}}, \\
&&G_{W^{\infty }}\left( \lambda \right) :=\prod\limits_{\lambda _{n}^{\infty
}\in \widehat{W^{\infty }},n\in S_{B^{\infty }}}\left( 1-\frac{\lambda }{%
\lambda _{n}^{\infty }}\right) ^{\gamma _{n}^{\infty }},G_{W_{1}^{\infty
}}\left( \lambda \right) :=\prod\limits_{\lambda _{n}^{\infty }\in \widehat{%
W_{1}^{\infty }},n\in S_{B^{\infty }}}\left( 1-\frac{\lambda }{\lambda
_{n}^{\infty }}\right) ^{k_{n}^{\infty }}.
\end{eqnarray*}%
Then it follows from $\left( \ref{hhypo1}\right) ,$ $\left( \ref{hhypo2}%
\right) ,$ Lemma \ref{F}, and the fact $\widehat{\sigma \left( B\right) }%
\cap \widehat{\sigma \left( B^{\infty }\right) }=\emptyset $ that $H\left(
\lambda \right) $ is an entire function. From Lemma \ref{Fqh}, we know that $%
F\left( \lambda \right) $ is an entire function of order less than $\frac{1}{%
2};$ $\Delta \left( \lambda \right) $ and $\Delta ^{\infty }\left( \lambda
\right) $ are entire functions of order $\frac{1}{2}.$ Moreover, since the
order of canonical product of an entire function is equal to its convergence
exponent of zeros (\cite[P16]{levision}), we can obtain that $G_{\Xi }\left(
\lambda \right) $ is an entire function of order less than $\frac{1}{2},$
and so the order of $H\left( \lambda \right) $ is at most $\frac{1}{2}.$

Now we aim to prove that $H\left( \lambda \right) \equiv 0.$ By Lemma \ref%
{proposition}, it is sufficient to prove that $\left\vert H\left( iy\right)
\right\vert \rightarrow 0$ as $y$ $\left( \text{real}\right) \rightarrow
\infty .$ From Lemma \ref{number} and the assumption $\left( \ref{hypo}%
\right) $, we know that there exists a constant $M>0$ such that
\begin{equation*}
\left\vert G_{\Xi }\left( iy\right) \right\vert \geq M\left\vert
y\right\vert ^{-\frac{m+1}{2}}\exp \left( 2\left\vert \text{Im}\sqrt{iy}%
\right\vert b\right) ,
\end{equation*}%
and thus according to $\left( \ref{Hlamuda}\right) $ and Lemma \ref{iy}, one
has
\begin{equation*}
\left\vert H\left( iy\right) \right\vert \leq \left\vert \frac{o\left(
\left\vert y\right\vert ^{-\frac{m+1}{2}}\exp \left( 2\left\vert \text{Im}%
\sqrt{iy}\right\vert b\right) \right) }{M\left\vert y\right\vert ^{-\frac{m+1%
}{2}}\exp \left( 2\left\vert \text{Im}\sqrt{iy}\right\vert b\right) }%
\right\vert =o\left( 1\right) \text{ as }y\text{ }\left( \text{real}\right)
\rightarrow \infty .
\end{equation*}%
This implies that $H\left( \lambda \right) \equiv 0\ $and thus $F\left(
\lambda \right) \equiv 0.$ Then we conclude from Lemma \ref{unique by F}
that $h=\widetilde{h},$ $\beta =\widetilde{\beta },$ $\gamma =\widetilde{%
\gamma }$ and $q=\widetilde{q}$ a.e. on $\left[ 0,\pi \right] .$
\end{proof}

\subsubsection{Pairs of Eigenvalues and Ratios}

\begin{Hypothesis}
\label{hypo2}Consider the subsequences $W$ and $W^{\infty }$ satisfying
\begin{equation*}
W<<\sigma \left( B\right) ,W<<\sigma \left( \widetilde{B}\right) ,\text{ }%
W^{\infty }<<\sigma \left( B^{\infty }\right) ,W^{\infty }<<\sigma \left(
\widetilde{B}^{\infty }\right)
\end{equation*}%
and the following conditions:

$(1)$ for any $\lambda _{n}=\widetilde{\lambda }_{\widetilde{n}}\in \widehat{%
W}\ $where $n\in S_{B}\ $and $\widetilde{n}\in S_{\widetilde{B}},$ suppose
that%
\begin{equation}
\text{ }\kappa _{n+\nu }=\widetilde{\kappa }_{\widetilde{n}+\nu }\ \text{for
}\nu =0,1,\ldots ,k_{n}-1,\text{ }  \label{hhypo 3}
\end{equation}%
where $k_{n}$ equals the number of occurrences of $\lambda _{n}\ $in $W;$

$\left( 2\right) $ for any $\lambda _{n}^{\infty }=\widetilde{\lambda }_{%
\widetilde{n}}^{\infty }\in \widehat{W^{\infty }}$ where $n\in S_{B^{\infty
}}\ $and $\widetilde{n}\in S_{\widetilde{B}^{\infty }},$ suppose that
\begin{equation}
\kappa _{n+\gamma }^{\infty }=\widetilde{\kappa }_{\widetilde{n}+\gamma
}^{\infty }\ \text{for }\gamma =0,1,\ldots ,k_{n}^{\infty }-1,\text{ }
\label{hhypo4}
\end{equation}%
where $k_{n}^{\infty }$ equals the number of occurrences of $\lambda
_{n}^{\infty }\ $in $W^{\infty }.$
\end{Hypothesis}

\begin{theorem}
\label{theorem copy(1)}Assume Hypothesis \ref{hypo2} and suppose that $q=%
\widetilde{q}$ a.e. on $\left[ b,\pi \right] ,$%
\begin{equation}
N_{W}(t)+N_{W^{\infty }}(t)\geq AN_{\sigma \left( B\right) }(t)+\left( \frac{%
b}{\pi }-A\right) N_{\sigma \left( B^{\infty }\right) }(t)-\frac{A}{2}%
+\epsilon  \label{hypo4}
\end{equation}%
for sufficiently large $t\in
\mathbb{R}
,$ where $\epsilon $ is an arbitrary positive constant$.$ Then $h=\widetilde{%
h},$ $\beta =\widetilde{\beta },$ $\gamma =\widetilde{\gamma }$ and $q=%
\widetilde{q}$ a.e. on $\left[ 0,\pi \right] .$
\end{theorem}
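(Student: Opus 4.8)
The plan is to reproduce the architecture of the proof of Theorem \ref{theorem}, but to replace the Wronskian $F(\lambda)=\langle\varphi,\widetilde\varphi\rangle|_{x=\pi}$ by a surrogate of \emph{lower} exponential type that is adapted to the ratio data. The starting observation is that, since $q=\widetilde q$ a.e.\ on $[b,\pi]$ and $H=\widetilde H$, the right solutions coincide there: $\psi(x,\lambda)=\widetilde\psi(x,\lambda)$ and $\psi^\infty(x,\lambda)=\widetilde\psi^\infty(x,\lambda)$ on $[b,\pi]$. Writing $\chi(x,\lambda):=\varphi(x,\lambda)-\widetilde\varphi(x,\lambda)$, evaluating $\Delta=\langle\psi,\varphi\rangle$ and $\Delta^\infty=\langle\psi^\infty,\varphi\rangle$ at $x=b$, and using $\langle\psi,\psi^\infty\rangle\equiv1$, I would derive the representations
\[
\chi(b,\lambda)=\psi^\infty(b,\lambda)\bigl(\Delta-\widetilde\Delta\bigr)(\lambda)-\psi(b,\lambda)\bigl(\Delta^\infty-\widetilde{\Delta^\infty}\bigr)(\lambda),
\]
together with the companion formula for $\chi'(b,\lambda)$ obtained by replacing $\psi,\psi^\infty$ by $\psi',(\psi^\infty)'$.

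First I would check that $\chi(b,\cdot)$ vanishes on $\widehat W\cup\widehat{W^\infty}$ to the required orders. At $\lambda_n\in\widehat W$ both $\Delta$ and $\widetilde\Delta$ vanish to order $\ge k_n$ (common eigenvalue, as $W\ll\sigma(B)$ and $W\ll\sigma(\widetilde B)$), while the identity $\varphi(\pi,\lambda)=-\Delta^\infty(\lambda)$ converts the ratio condition (\ref{hhypo 3}) into the statement that $\Delta^\infty-\widetilde{\Delta^\infty}$ vanishes to order $\ge k_n$ there; hence $\chi(b,\cdot)$ does. At $\lambda_n^\infty\in\widehat{W^\infty}$ the roles swap: $\Delta^\infty-\widetilde{\Delta^\infty}$ vanishes to order $\ge k_n^\infty$ (common eigenvalue of $B^\infty,\widetilde B^\infty$), and via $\Delta(\lambda)=\varphi'(\pi,\lambda)+H\varphi(\pi,\lambda)$ condition (\ref{hhypo4}) makes $\Delta-\widetilde\Delta$ vanish to order $\ge k_n^\infty$. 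Thus $H(\lambda):=\chi(b,\lambda)/G_\Xi(\lambda)$, with $G_\Xi:=G_WG_{W^\infty}$, is entire of order $\le\tfrac12$. Applying Lemma \ref{number} to $\Xi=W\cup W^\infty$ with $l_1=A$, $l_2=\tfrac b\pi-A$, $l_3=-\tfrac A2+\epsilon$ (admissible by (\ref{hypo4})) gives $|G_\Xi(iy)|\ge M|y|^{\epsilon}e^{b|\mathrm{Im}\sqrt{iy}|}$, while Lemma \ref{jjgj} yields $|\chi(b,iy)|=O\!\left(e^{b|\mathrm{Im}\sqrt{iy}|}\right)$; hence $|H(iy)|=O(|y|^{-\epsilon})\to0$, and Lemma \ref{proposition} forces $H\equiv0$, i.e.\ $\varphi(b,\lambda)\equiv\widetilde\varphi(b,\lambda)$.

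The hard part is that this only yields $\chi(b,\cdot)\equiv0$, which is \emph{not} enough: to invoke Lemma \ref{unique by F} I need $F\equiv0$, and after $\chi(b,\cdot)\equiv0$ one has $F=\langle\varphi,\widetilde\varphi\rangle|_{x=b}=-\varphi(b,\cdot)\,\chi'(b,\cdot)$, so I must still show $\chi'(b,\cdot)\equiv0$. A naive repetition fails, because $\chi'(b,iy)$ generically carries an extra factor $|y|^{1/2}$, so $|\chi'(b,iy)|/|G_\Xi(iy)|=O(|y|^{1/2-\epsilon})$ need not vanish for small $\epsilon$. The resolution, and the crux of the argument, is a bootstrap: matching the dominant $\cos(\sqrt\lambda b)$ asymptotics in Lemma \ref{jjgj} for the identity $\varphi(b,\lambda)\equiv\widetilde\varphi(b,\lambda)$ forces $b_1=\widetilde b_1$. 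Once the coefficients of $\sqrt\lambda\sin(\sqrt\lambda b)$ in $\varphi'(b,\cdot)$ and $\widetilde\varphi'(b,\cdot)$ agree, the only $|y|^{1/2}e^{b|\mathrm{Im}\sqrt{iy}|}$-sized contribution to $\chi'(b,iy)$ cancels, and the residual terms (of exponential type $|2d-b|<b$, together with the $O(e^{b|\cdots|})$ errors) give $|\chi'(b,iy)|=O\!\left(e^{b|\mathrm{Im}\sqrt{iy}|}\right)$. Since $\chi'(b,\cdot)$ vanishes on $\widehat W\cup\widehat{W^\infty}$ to the same orders as $\chi(b,\cdot)$ (by its companion representation), a second application of Lemmas \ref{number} and \ref{proposition} to $\chi'(b,\lambda)/G_\Xi(\lambda)$ now gives $\chi'(b,\cdot)\equiv0$. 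Then $\varphi(b,\cdot)\equiv\widetilde\varphi(b,\cdot)$ and $\varphi'(b,\cdot)\equiv\widetilde\varphi'(b,\cdot)$ yield $F\equiv0$, and Lemma \ref{unique by F} delivers $h=\widetilde h$, $\beta=\widetilde\beta$, $\gamma=\widetilde\gamma$, and $q=\widetilde q$ a.e.\ on $[0,\pi]$.
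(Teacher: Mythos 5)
Your argument is correct, and its overall architecture coincides with the paper's: first annihilate $F_{1}(\lambda)=\varphi (b,\lambda)-\widetilde{\varphi }(b,\lambda)$ by dividing by $G_{W}G_{W^{\infty }}$ and applying Lemma \ref{number} and Lemma \ref{proposition} along the imaginary axis, then annihilate $F_{2}=\varphi ^{\prime }(b,\cdot )-\widetilde{\varphi }^{\prime }(b,\cdot )$ in a second pass, and finally invoke Lemma \ref{Fqh} and Lemma \ref{unique by F}. Two sub-steps are, however, executed by different means. For the entirety of the quotients, the paper integrates the system (\ref{7}) backward from $x=\pi $ over $[b,\pi ]$ to get $\varphi _{\nu }(b,\lambda _{n})=\widetilde{\varphi }_{\nu }(b,\lambda _{n})$ and $\varphi _{\nu }^{\prime }(b,\lambda _{n})=\widetilde{\varphi }_{\nu }^{\prime }(b,\lambda _{n})$, whereas you expand $\varphi =-\Delta ^{\infty }\psi +\Delta \psi ^{\infty }$ on $[b,\pi ]$ and read the vanishing orders off $\Delta -\widetilde{\Delta }$ and $\Delta ^{\infty }-\widetilde{\Delta ^{\infty }}$; both are sound, and your version makes the role of (\ref{hhypo 3})--(\ref{hhypo4}) as derivative conditions on the characteristic functions more transparent. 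For the second pass, the paper writes $H_{2}=-F/(G_{\Theta }\varphi (b,\cdot ))$ and plays the Wronskian bound of Lemma \ref{iy} (Proposition \ref{ooo copy(1)} with $m=-1$) against $\left\vert \varphi (b,iy)\right\vert \sim \frac{b_{1}}{2}e^{b\left\vert \mathrm{Im}\sqrt{iy}\right\vert }$, while you first extract $b_{1}=\widetilde{b}_{1}$ from $F_{1}\equiv 0$ and (\ref{1}) (exactly the quantity appearing in the paper's (\ref{f1iy})), which cancels the $\sqrt{\lambda }\sin (\sqrt{\lambda }b)$ term in $F_{2}$; the surviving $(b_{2}-\widetilde{b}_{2})\sqrt{\lambda }\sin (\sqrt{\lambda }(2d-b))$ term is harmless since $\left\vert 2d-b\right\vert <b$, so $\left\vert F_{2}(iy)\right\vert =O\bigl(e^{b\left\vert \mathrm{Im}\sqrt{iy}\right\vert }\bigr)$ and the same quotient estimate closes. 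Your route thus avoids the appendix machinery entirely for this theorem, using only the elementary asymptotics of Lemma \ref{jjgj}, at the cost of the small bootstrap through $b_{1}=\widetilde{b}_{1}$; the paper's route instead reuses Lemma \ref{iy}, which it needs elsewhere anyway.
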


\begin{proof}
Denote%
\begin{equation}
H_{1}\left( \lambda \right) :=\frac{F_{1}\left( \lambda \right) }{G_{\Theta
}\left( \lambda \right) },\text{ }H_{2}\left( \lambda \right) =\frac{%
F_{2}\left( \lambda \right) }{G_{\Theta }\left( \lambda \right) },
\label{FGFG}
\end{equation}%
where $G_{\Theta }\left( \lambda \right) :=G_{W}\left( \lambda \right)
G_{W^{\infty }}\left( \lambda \right) ,$
\begin{equation}
G_{W}\left( \lambda \right) :=\prod\limits_{\lambda _{n}\in \widehat{W},n\in
S_{B}}\left( 1-\frac{\lambda }{\lambda _{n}}\right) ^{k_{n}},\text{ }%
G_{W^{\infty }}\left( \lambda \right) :=\prod\limits_{\lambda _{n}^{\infty
}\in \widehat{W^{\infty }},n\in S_{B^{\infty }}}\left( 1-\frac{\lambda }{%
\lambda _{n}^{\infty }}\right) ^{k_{n}^{\infty }}  \label{GG}
\end{equation}%
and%
\begin{equation*}
F_{1}\left( \lambda \right) :=\varphi \left( b,\lambda \right) -\widetilde{%
\varphi }\left( b,\lambda \right) ,\text{ }F_{2}\left( \lambda \right)
:=\varphi ^{\prime }\left( b,\lambda \right) -\widetilde{\varphi }^{\prime
}\left( b,\lambda \right) .
\end{equation*}

\textbf{Step 1}: This step is devoted to show that $H_{1}\left( \lambda
\right) $ and $H_{2}\left( \lambda \right) $ are entire functions of $%
\lambda \in
\mathbb{C}
$. We first prove that $\frac{F_{1}\left( \lambda \right) }{G_{W}\left(
\lambda \right) }\ $and $\frac{F_{2}\left( \lambda \right) }{G_{W}\left(
\lambda \right) }$ are entire functions of $\lambda \in
\mathbb{C}
$.\ In fact, from $\left( \ref{ratios}\right) ,$ $\left( \ref{7}\right) ,$ $%
\left( \ref{3f}\right) ,$ $\left( \ref{hhypo 3}\right) $, $H=\widetilde{H}$
and $q=\widetilde{q}$ a.e. on $\left[ b,\pi \right] ,$ one can easily deduce
that for $\lambda _{n}\in \widehat{W},$ $n\in S_{B}$,
\begin{equation*}
\varphi _{\nu }\left( x,\lambda _{n}\right) =\widetilde{\varphi }_{\nu
}\left( x,\lambda _{n}\right) ,\ x\in \left[ b,\pi \right] \text{,}
\end{equation*}%
where $\nu =0,1,\ldots ,k_{n}-1.$ Thus for $\lambda _{n}\in \widehat{W},$ $%
n\in S_{B}$, $\nu =0,1,\ldots ,k_{n}-1,$ one observes that
\begin{equation}
\varphi _{\nu }\left( b,\lambda _{n}\right) =\widetilde{\varphi }_{\nu
}\left( b,\lambda _{n}\right) ,\text{ }\varphi _{\nu }^{\prime }\left(
b,\lambda _{n}\right) =\widetilde{\varphi }_{\nu }^{\prime }\left( b,\lambda
_{n}\right) ,\text{ }  \label{fainiu}
\end{equation}%
and thus
\begin{eqnarray}
\left. \frac{d^{\nu }F_{1}\left( \lambda \right) }{d\lambda ^{\nu }}%
\right\vert _{\lambda =\lambda _{n}} &:&=\nu !\left( \varphi _{\nu }\left(
b,\lambda _{n}\right) -\widetilde{\varphi }_{\nu }\left( b,\lambda
_{n}\right) \right) =0,  \label{555} \\
\left. \frac{d^{\nu }F_{2}\left( \lambda \right) }{d\lambda ^{\nu }}%
\right\vert _{\lambda =\lambda _{n}} &:&=\nu !\left( \varphi _{\nu }^{\prime
}\left( b,\lambda _{n}\right) -\widetilde{\varphi }_{\nu }^{\prime }\left(
b,\lambda _{n}\right) \right) =0.  \label{gggpie}
\end{eqnarray}%
Then in view of $\left( \ref{FGFG}\right) $ and $\left( \ref{GG}\right) ,$
we infer that $\frac{F_{1}\left( \lambda \right) }{G_{W}\left( \lambda
\right) }\ $and $\frac{F_{2}\left( \lambda \right) }{G_{W}\left( \lambda
\right) }$ are entire functions of $\lambda \in
\mathbb{C}
$. Similarly, we can also prove that $\frac{F_{1}\left( \lambda \right) }{%
G_{W^{\infty }}\left( \lambda \right) }\ $and $\frac{F_{2}\left( \lambda
\right) }{G_{W^{\infty }}\left( \lambda \right) }$ are entire functions of $%
\lambda \in
\mathbb{C}
$. Therefore, from the fact $\widehat{\sigma \left( B\right) }\cap \widehat{%
\sigma \left( B^{\infty }\right) }=\emptyset $ we conclude that $H_{1}\left(
\lambda \right) $ and $H_{2}\left( \lambda \right) $ are entire functions of
$\lambda \in
\mathbb{C}
$. Furthermore, it is easy to see that the order of $H_{1}\left( \lambda
\right) $ and $H_{2}\left( \lambda \right) $ are less than $\frac{1}{2}$.

\textbf{Step 2}: Now we want to use Lemma \ref{proposition} to prove $%
H_{1}\left( \lambda \right) \equiv 0.$ From Lemma \ref{number} and the
assumption $\left( \ref{hypo4}\right) ,$ it follows that there exists a
constant $M>0$ such that
\begin{equation}
\left\vert G_{\Theta }\left( iy\right) \right\vert \geq M\left\vert
y\right\vert ^{\epsilon }\exp \left( \left\vert \text{Im}\sqrt{iy}%
\right\vert b\right) .  \label{GS}
\end{equation}%
Moreover, from $\left( \ref{1}\right) $ we know that
\begin{equation*}
F_{1}\left( \lambda \right) =\left( \left( b_{1}-\widetilde{b}_{1}\right)
\cos \left( \sqrt{\lambda }b\right) +\left( b_{2}-\widetilde{b}_{2}\right)
\cos \left( \sqrt{\lambda }\left( 2d-b\right) \right) \right) +O\left( \frac{%
\exp \left( \left\vert \text{Im}\sqrt{\lambda }\right\vert b\right) }{\sqrt{%
\lambda }}\right) ,
\end{equation*}%
and thus%
\begin{equation}
\left\vert F_{1}\left( iy\right) \right\vert =\exp \left( \left\vert \text{Im%
}\sqrt{iy}\right\vert b\right) \left( \frac{\left\vert b_{1}-\widetilde{b}%
_{1}\right\vert }{2}+o\left( 1\right) \right) \text{ as }y\text{ }\left(
\text{real}\right) \rightarrow \infty .  \label{f1iy}
\end{equation}%
Therefore, by $\left( \ref{FGFG}\right) ,$ $\left( \ref{GS}\right) $ and $%
\left( \ref{f1iy}\right) ,$ one deduces that
\begin{equation*}
\left\vert H_{1}\left( iy\right) \right\vert \leq \left\vert \frac{\exp
\left( \left\vert \text{Im}\sqrt{iy}\right\vert b\right) \left( \frac{%
\left\vert b_{1}-\widetilde{b}_{1}\right\vert }{2}+o\left( 1\right) \right)
}{M\left\vert y\right\vert ^{\epsilon }\exp \left( \left\vert \text{Im}\sqrt{%
iy}\right\vert b\right) }\right\vert =O\left( y^{-\epsilon }\right) ,
\end{equation*}%
as $y$ $\left( \text{real}\right) \rightarrow \infty .$ By Lemma \ref%
{proposition}, one deduces that $H_{1}\left( \lambda \right) \equiv 0\ $and
therefore $F_{1}\left( \lambda \right) \equiv 0$ for all $\lambda \in
\mathbb{C}
,$ i.e., $\varphi \left( b,\lambda \right) \equiv \widetilde{\varphi }\left(
b,\lambda \right) .$

\textbf{Step 3}: From the fact $\varphi \left( b,\lambda \right) \equiv
\widetilde{\varphi }\left( b,\lambda \right) $, we know that
\begin{equation*}
H_{2}\left( \lambda \right) =\frac{\left[ \varphi ^{\prime }\left( b,\lambda
\right) -\widetilde{\varphi }^{\prime }\left( b,\lambda \right) \right]
\varphi \left( b,\lambda \right) }{G_{\Theta }\left( \lambda \right) \varphi
\left( b,\lambda \right) }=\frac{-F\left( \lambda \right) }{G_{\Theta
}\left( \lambda \right) \varphi \left( b,\lambda \right) }.
\end{equation*}%
Hence, from $\left( \ref{faib}\right) ,$ $\left( \ref{GS}\right) \ $and
Lemma \ref{iy}, we have
\begin{eqnarray*}
\left\vert H_{2}\left( iy\right) \right\vert &\leq &\left\vert \frac{o\left(
\exp \left( 2\left\vert \text{Im}\sqrt{iy}\right\vert b\right) \right) }{%
M\left\vert y\right\vert ^{\epsilon }\exp \left( \left\vert \text{Im}\sqrt{iy%
}\right\vert b\right) \frac{b_{1}}{2}\exp \left( \left\vert \text{Im}\sqrt{iy%
}\right\vert b\right) \left( 1+o\left( 1\right) \right) }\right\vert \\
&=&o\left( y^{-\epsilon }\right) .
\end{eqnarray*}%
Then it follows from Lemma \ref{proposition} that $H_{2}\left( \lambda
\right) =0$ and thus $F\left( \lambda \right) \equiv 0$ for all $\lambda \in
\mathbb{C}
.$ Now we can conclude from Lemma \ref{unique by F} that $h=\widetilde{h},$ $%
\beta =\widetilde{\beta },$ $\gamma =\widetilde{\gamma }$ and $q=\widetilde{q%
}$ a.e. on $\left[ 0,\pi \right] .$ The proof is thus completed.
\end{proof}

\begin{remark}
If $b_{1}=\frac{\beta +\beta ^{-1}}{2}$ is given, then it is easy to see
from $\left( \ref{f1iy}\right) $ that
\begin{equation*}
\left\vert F_{1}\left( iy\right) \right\vert =o\left( \exp \left( \left\vert
\text{Im}\sqrt{iy}\right\vert b\right) \right) \text{ as }y\text{ }\left(
\text{real}\right) \rightarrow \infty .
\end{equation*}%
In this case the assumption $\left( \ref{hypo4}\right) $ in Theorem \ref%
{theorem copy(1)} can be replaced by
\begin{equation*}
N_{W}(t)+N_{W^{\infty }}(t)\geq AN_{\sigma \left( B\right) }(t)+\left( \frac{%
b}{\pi }-A\right) N_{\sigma \left( B^{\infty }\right) }(t)-\frac{A}{2}.
\end{equation*}
\end{remark}

\subsection{Case II: $q$ is known on $\left[ b,\protect\pi \right] ,$ where $%
b=d$}

\subsubsection{Pairs of Eigenvalues and Normalizing Constants}

\begin{theorem}
\label{theorem copy(2)}Assume Hypothesis \ref{hypo1} and suppose that $q=%
\widetilde{q}$ a.e. on $\left[ b,\pi \right] ,$ and
\begin{eqnarray}
&&N_{W}(t)+N_{W_{1}}(t)+N_{W^{\infty }}(t)+N_{W_{1}^{\infty }}(t)
\label{hypo6} \\
&\geq &AN_{\sigma \left( B\right) }(t)+\left( \frac{2d}{\pi }-A\right)
N_{\sigma \left( B^{\infty }\right) }(t)-\frac{A}{2}+\frac{1}{2}+\epsilon
\notag
\end{eqnarray}%
for sufficiently large $t\in
\mathbb{R}
,$ where $\epsilon $ is an arbitrary positive constant$.$ Then $h=\widetilde{%
h},$ $\beta =\widetilde{\beta },$ $\gamma =\widetilde{\gamma }$ and $q=%
\widetilde{q}$ a.e. on $\left[ 0,\pi \right] .$
\end{theorem}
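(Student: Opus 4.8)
The plan is to follow the scheme of the proof of Theorem~\ref{theorem}, the only essential change being that the $b\in(d,\pi]$ asymptotics of $F$ must be replaced by the asymptotics of $F$ at the discontinuity point $b=d$. I would again put
\[
H(\lambda):=\frac{F(\lambda)}{G_{\Xi}(\lambda)},\qquad G_{\Xi}(\lambda):=G_{W}(\lambda)G_{W_{1}}(\lambda)G_{W^{\infty}}(\lambda)G_{W_{1}^{\infty}}(\lambda),
\]
with the four canonical products defined exactly as in the proof of Theorem~\ref{theorem}. The entireness of $H$ and the bound on its order rest only on Hypothesis~\ref{hypo1}, Lemma~\ref{F}, the disjointness $\widehat{\sigma(B)}\cap\widehat{\sigma(B^{\infty})}=\emptyset$, and the order estimates for $F$, $\Delta$, $\Delta^{\infty}$, $G_{\Xi}$; none of these involve the position of $b$, so the Case~I argument carries over verbatim to give that $H$ is entire of order at most $\tfrac12$. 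By Lemma~\ref{proposition} it then suffices to show $\left\vert H(iy)\right\vert\to0$ as $y$ (real) $\to\infty$.

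The new ingredient is the behaviour of $F(iy)$ for $b=d$. By the $b=d$ case of Lemma~\ref{Fqh},
\[
F(\lambda)=\varphi(d+0,\lambda)\widetilde{\varphi}^{\prime}(d+0,\lambda)-\varphi^{\prime}(d+0,\lambda)\widetilde{\varphi}(d+0,\lambda).
\]
Evaluating Lemma~\ref{jjgj} at $x=d+0$ (so that $2d-x=d$) and using $b_{1}+b_{2}=\beta$, $b_{2}-b_{1}=-\beta^{-1}$ gives, as $\left\vert\lambda\right\vert\to\infty$,
\[
\varphi(d+0,\lambda)=\beta\cos(\sqrt{\lambda}\,d)+O\!\left(\frac{\exp(\left\vert\text{Im}\sqrt{\lambda}\right\vert d)}{\sqrt{\lambda}}\right),
\]
\[
\varphi^{\prime}(d+0,\lambda)=-\beta^{-1}\sqrt{\lambda}\sin(\sqrt{\lambda}\,d)+O\!\left(\exp(\left\vert\text{Im}\sqrt{\lambda}\right\vert d)\right),
\]
together with the analogous formulas for $\widetilde{\varphi}$ with $\beta$ replaced by $\widetilde{\beta}$ (recall that $d=\widetilde{d}$ is fixed in this section). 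Substituting, the two leading products combine, and I expect
\[
F(\lambda)=\left(\beta^{-1}\widetilde{\beta}-\beta\widetilde{\beta}^{-1}\right)\sqrt{\lambda}\cos(\sqrt{\lambda}\,d)\sin(\sqrt{\lambda}\,d)+O\!\left(\exp(2\left\vert\text{Im}\sqrt{\lambda}\right\vert d)\right),
\]
whence, putting $\lambda=iy$,
\[
\left\vert F(iy)\right\vert=O\!\left(\left\vert y\right\vert^{\frac12}\exp\left(2\left\vert\text{Im}\sqrt{iy}\right\vert d\right)\right)\quad\text{as }y\ (\text{real})\to\infty.
\]

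Finally I would apply Lemma~\ref{number} to $X=\Xi$ with $l_{1}=A$, $l_{2}=\tfrac{2d}{\pi}-A$, $l_{3}=-\tfrac{A}{2}+\tfrac12+\epsilon$, which are the constants supplied by the counting hypothesis~(\ref{hypo6}). Since $l_{1}+l_{2}=\tfrac{2d}{\pi}$ and $\tfrac{l_{1}}{2}+l_{3}=\tfrac12+\epsilon$, this produces a constant $M>0$ with
\[
\left\vert G_{\Xi}(iy)\right\vert\geq M\left\vert y\right\vert^{\frac12+\epsilon}\exp\left(2\left\vert\text{Im}\sqrt{iy}\right\vert d\right)
\]
for all large $\left\vert y\right\vert$. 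Dividing the two estimates yields $\left\vert H(iy)\right\vert=O(\left\vert y\right\vert^{-\epsilon})\to0$, so $H\equiv0$ by Lemma~\ref{proposition}, hence $F\equiv0$, and Lemma~\ref{unique by F} gives $h=\widetilde{h}$, $\beta=\widetilde{\beta}$, $\gamma=\widetilde{\gamma}$ and $q=\widetilde{q}$ a.e.\ on $[0,\pi]$.

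The main obstacle is exactly the $F(iy)$ asymptotic. Unlike Case~I, the leading term here does not decay: its coefficient $\beta^{-1}\widetilde{\beta}-\beta\widetilde{\beta}^{-1}$ need not vanish a priori (it does so only after $\beta=\widetilde{\beta}$ has been established), so $\left\vert F(iy)\right\vert$ genuinely grows like $\left\vert y\right\vert^{1/2}$. This is why the counting condition must be strengthened by $+\tfrac12+\epsilon$ and why no local smoothness of $q$ near $d$ can be exploited: the subinterval $[d,b]$ that yielded the gain $-\tfrac{m+1}{2}$ in Theorem~\ref{theorem} has here shrunk to a single point. Some care is also needed to confirm that every cross term in the product expansion of $F$ is absorbed into the remainder $O(\exp(2\left\vert\text{Im}\sqrt{\lambda}\right\vert d))$, using that $\cos(\sqrt{\lambda}\,d)$ and $\sin(\sqrt{\lambda}\,d)$ are each $O(\exp(\left\vert\text{Im}\sqrt{\lambda}\right\vert d))$.
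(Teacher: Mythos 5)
Your proposal is correct and follows essentially the same route as the paper: the same quotient $H=F/G_{\Xi}$, the same bound $\left\vert F(iy)\right\vert=O\bigl(\left\vert y\right\vert^{1/2}\exp(2\left\vert \text{Im}\sqrt{iy}\right\vert d)\bigr)$, and the same application of Lemma \ref{number} with the extra $\tfrac12+\epsilon$ to force $\left\vert H(iy)\right\vert\to0$. The only cosmetic difference is that you evaluate the $x>d$ asymptotics of Lemma \ref{jjgj} directly at $x=d+0$, whereas the paper first rewrites $F$ in terms of $\varphi(d-0,\lambda)$, $\varphi'(d-0,\lambda)$ via the jump conditions and then uses the $x<d$ asymptotics; both give the identical leading behaviour, and your closing observation about the non-vanishing coefficient $\beta^{-1}\widetilde{\beta}-\beta\widetilde{\beta}^{-1}$ matches the paper's Remark \ref{theorem copy(6)}.
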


\begin{proof}
Let
\begin{equation}
H\left( \lambda \right) :=\frac{F\left( \lambda \right) }{G_{\Xi }\left(
\lambda \right) },  \label{hhhh}
\end{equation}%
where $G_{\Xi }\left( \lambda \right) $ is similarly defined as in $\left( %
\ref{G3}\right) $ and $F\left( \lambda \right) $ is defined by $\left( \ref%
{FFF}\right) .$ By Lemma \ref{Fqh} we know that if $b=d$,
\begin{eqnarray}
F\left( \lambda \right) &=&\left. \left\langle \varphi \left( x,\lambda
\right) ,\widetilde{\varphi }\left( x,\lambda \right) \right\rangle
\right\vert _{x=d+0}  \label{flamuda2} \\
&=&\beta \widetilde{\beta }^{-1}\varphi \left( d-0,\lambda \right)
\widetilde{\varphi }^{\prime }\left( d-0,\lambda \right) -\beta ^{-1}%
\widetilde{\beta }\widetilde{\varphi }\left( d-0,\lambda \right) \varphi
^{\prime }\left( d-0,\lambda \right)  \notag \\
&&+\widetilde{\gamma }\beta \varphi \left( d-0,\lambda \right) \widetilde{%
\varphi }\left( d-0,\lambda \right) -\gamma \widetilde{\beta }\widetilde{%
\varphi }\left( d-0,\lambda \right) \varphi \left( d-0,\lambda \right) .
\notag
\end{eqnarray}%
Moreover, from Lemma \ref{jjgj} it is easy to see that
\begin{eqnarray}
\left\vert \varphi \left( d-0,iy\right) \right\vert &=&\frac{1}{2}\exp
\left( \left\vert \text{Im}\sqrt{iy}\right\vert d\right) \left( 1+o\left(
1\right) \right) ,  \label{faiiy} \\
\left\vert \varphi ^{\prime }\left( d-0,iy\right) \right\vert &=&\frac{1}{2}%
\left\vert y\right\vert ^{\frac{1}{2}}\exp \left( \left\vert \text{Im}\sqrt{%
iy}\right\vert d\right) \left( 1+o\left( 1\right) \right)  \notag
\end{eqnarray}%
as $y$ $\left( \text{real}\right) \rightarrow \infty ,$ and hence%
\begin{equation}
\left\vert F\left( iy\right) \right\vert =O\left( \left\vert y\right\vert ^{%
\frac{1}{2}}\exp \left( 2\left\vert \text{Im}\sqrt{iy}\right\vert d\right)
\right) \text{ as }y\text{ }\left( \text{real}\right) \rightarrow \infty .
\label{FIY}
\end{equation}%
By Lemma \ref{number} and $\left( \ref{hypo6}\right) ,$ we infer that there
exists a constant $M>0$ such that
\begin{equation}
\left\vert G_{\Xi }\left( iy\right) \right\vert \geq M\left\vert
y\right\vert ^{\frac{1}{2}+\epsilon }\exp \left( 2\left\vert \text{Im}\sqrt{%
iy}\right\vert d\right) .  \label{Gttt}
\end{equation}%
Therefore, from $\left( \ref{hhhh}\right) ,$ $\left( \ref{FIY}\right) \ $and
$\left( \ref{Gttt}\right) ,$ we have
\begin{equation*}
\left\vert H\left( iy\right) \right\vert \leq \left\vert \frac{O\left(
\left\vert y\right\vert ^{\frac{1}{2}}\exp \left( 2\left\vert \text{Im}\sqrt{%
iy}\right\vert d\right) \right) }{M\left\vert y\right\vert ^{\frac{1}{2}%
+\epsilon }\exp \left( 2\left\vert \text{Im}\sqrt{iy}\right\vert d\right) }%
\right\vert =O\left( \left\vert y\right\vert ^{-\epsilon }\right)
\end{equation*}%
as $y$ $\left( \text{real}\right) \rightarrow \infty .$ This implies that $%
H\left( \lambda \right) \equiv 0\ $and hence $F\left( \lambda \right) \equiv
0$ for all $\lambda \in
\mathbb{C}
$ by the argument of the proof of Theorem \ref{theorem}$.$ Then the
statement of this theorem can be concluded from Lemma \ref{unique by F}.
\end{proof}

\begin{remark}
\label{theorem copy(6)}$(1)$ If $\beta =\widetilde{\beta },$ instead of
condition $\left( \ref{hypo6}\right) $, we only need the following condition$%
:$%
\begin{eqnarray*}
&&N_{S}(t)+N_{S_{1}}(t)+N_{S_{1}^{\infty }}(t)+N_{S^{\infty }}(t) \\
&\geq &AN_{\sigma \left( B\right) }(t)+\left( \frac{2d}{\pi }-A\right)
N_{\sigma \left( B^{\infty }\right) }(t)-\frac{A}{2}+\epsilon ;
\end{eqnarray*}%
$(2)$ If $\beta =\widetilde{\beta },$ $\gamma =\widetilde{\gamma },$ $q,$ $%
\widetilde{q}\in $ $C^{m}$ near $d,$ then, instead of condition $\left( \ref%
{hypo6}\right) $, we only need the following condition$:$%
\begin{eqnarray*}
&&N_{S}(t)+N_{S_{1}}(t)+N_{S_{1}^{\infty }}(t)+N_{S^{\infty }}(t) \\
&\geq &AN_{\sigma \left( B\right) }(t)+\left( \frac{2d}{\pi }-A\right)
N_{\sigma \left( B^{\infty }\right) }(t)-\frac{A}{2}-\frac{m+1}{2}\text{.}
\end{eqnarray*}
\end{remark}

In fact, one notes that for $x\in \left( 0,d\right) ,$
\begin{eqnarray}
&&\varphi \left( x,\lambda \right) \widetilde{\varphi }^{\prime }\left(
x,\lambda \right) -\widetilde{\varphi }\left( x,\lambda \right) \varphi
^{\prime }\left( x,\lambda \right)  \label{faifai} \\
&=&y_{1,0}(x,\lambda )\widetilde{y}_{1,0}^{\prime }(x,\lambda
)-y_{1,0}^{\prime }(x,\lambda )\widetilde{y}_{1,0}(x,\lambda )  \notag \\
&&+h\left( y_{2,0}(x,\lambda )\widetilde{y}_{1,0}^{\prime }(x,\lambda
)-y_{2,0}^{\prime }(x,\lambda )\widetilde{y}_{1,0}(x,\lambda )\right)  \notag
\\
&&+\widetilde{h}\left( y_{1,0}(x,\lambda )\widetilde{y}_{2,0}^{\prime
}(x,\lambda )-\widetilde{y}_{2,0}(x,\lambda )y_{1,0}^{\prime }(x,\lambda
)\right)  \notag \\
&&+h\widetilde{h}\left( y_{2,0}(x,\lambda )\widetilde{y}_{2,0}^{\prime
}(x,\lambda )-y_{2,0}^{\prime }(x,\lambda )\widetilde{y}_{2,0}(x,\lambda
)\right) .  \notag
\end{eqnarray}%
Therefore, if $\beta =\widetilde{\beta },$ it follows from $\left( \ref%
{flamuda2}\right) ,$ $\left( \ref{faiiy}\right) $ and Remark \ref{ooo
copy(4)} that
\begin{equation}
\left\vert F\left( iy\right) \right\vert =O\left( \exp \left( 2\left\vert
\text{Im}\sqrt{iy}\right\vert d\right) \right) \text{ as }y\text{ }\left(
\text{real}\right) \rightarrow \infty .  \label{FIY2}
\end{equation}%
Moreover, if $\beta =\widetilde{\beta },$ $\gamma =\widetilde{\gamma },$ $q,$
$\widetilde{q}\in $ $C^{m}$ near $d,$ it is easy to see from $\left( \ref%
{flamuda2}\right) $ and Proposition \ref{ooo copy(1)} that
\begin{equation}
\left\vert F\left( iy\right) \right\vert =o\left( \left\vert y\right\vert ^{-%
\frac{m+1}{2}}\exp \left( 2\left\vert \text{Im}\sqrt{iy}\right\vert d\right)
\right) \text{ as }y\text{ }\left( \text{real}\right) \rightarrow \infty .
\label{ass}
\end{equation}%
Thus by the argument of the proof of Theorem \ref{theorem copy(2)}, Remark %
\ref{theorem copy(6)} can be directly obtained.

\begin{corollary}
\label{corollary copy(3)}Let $d=\frac{\pi }{2}.$ Assume that $q$ is $C^{m}$
near $\frac{\pi }{2}\ $and suppose that $\beta ,$ $\gamma ,$ $q\ $on $\left[
\frac{\pi }{2},\pi \right] \ $are known a priori. Then $h$ and $q$ on $\left[
0,\pi \right] $ can be uniquely determined by all the eigenvalues $\left\{
\lambda _{n}\right\} _{n\in
\mathbb{N}
_{0}}$ of $B$ except for $\left( \left[ \frac{m+2}{2}\right] \right) ,$ or
all the eigenvalues $\left\{ \lambda _{n}^{\infty }\right\} _{n\in
\mathbb{N}
_{0}}$ of $B^{\infty }$ except for $\left( \left[ \frac{m+1}{2}\right]
\right) $.
\end{corollary}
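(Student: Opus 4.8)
The plan is to obtain Corollary \ref{corollary copy(3)} as the specialization of Theorem \ref{theorem copy(2)}, in the form relaxed by Remark \ref{theorem copy(6)}(2), to the situation $b=d=\frac{\pi}{2}$. I would begin by introducing two boundary value problems $B,\widetilde{B}$ (together with $B^{\infty},\widetilde{B}^{\infty}$) sharing all the prescribed data, so that the corollary amounts to showing $h=\widetilde{h}$ and $q=\widetilde{q}$ a.e. on $[0,\pi]$. The hypotheses of Remark \ref{theorem copy(6)}(2) are exactly what is being assumed: $\beta=\widetilde{\beta}$ and $\gamma=\widetilde{\gamma}$ since $\beta,\gamma$ are given a priori; $q=\widetilde{q}$ a.e. on $[\frac{\pi}{2},\pi]$ since $q$ on that interval is prescribed; and $q,\widetilde{q}\in C^{m}$ near $\frac{\pi}{2}$ by hypothesis. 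Consequently the refined bound (\ref{ass}) for $|F(iy)|$ is available, which is the source of the relaxed counting condition in that remark.

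Next I would encode the two alternative data sets in the subsequence language of Hypothesis \ref{hypo1}. Since the corollary uses no normalizing-constant information, I take the norming-constant subsequences $W_{1}=W_{1}^{\infty}=\emptyset$, so that conditions (1)--(2) of Hypothesis \ref{hypo1} hold vacuously. For the first alternative I let $W$ consist of the common $B$-eigenvalues (namely $\sigma(B)$ with $[\frac{m+2}{2}]$ eigenvalues, counted with multiplicity, removed) and put $W^{\infty}=\emptyset$; for the second alternative I let $W^{\infty}$ consist of the common $B^{\infty}$-eigenvalues (all but $[\frac{m+1}{2}]$) and put $W=\emptyset$. Because only finitely many eigenvalues are omitted, for all sufficiently large $t$ one has $N_{W}(t)=N_{\sigma(B)}(t)-[\frac{m+2}{2}]$ in the first case and $N_{W^{\infty}}(t)=N_{\sigma(B^{\infty})}(t)-[\frac{m+1}{2}]$ in the second.

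The key step is to verify the counting inequality of Remark \ref{theorem copy(6)}(2),
\[
N_{W}(t)+N_{W_{1}}(t)+N_{W_{1}^{\infty}}(t)+N_{W^{\infty}}(t)\ge A\,N_{\sigma(B)}(t)+\left(\tfrac{2d}{\pi}-A\right)N_{\sigma(B^{\infty})}(t)-\tfrac{A}{2}-\tfrac{m+1}{2},
\]
for a suitable real constant $A$, noting that $\frac{2d}{\pi}=1$ here. In the first case I would take $A=1$: this annihilates the coefficient of $N_{\sigma(B^{\infty})}(t)$ (which is essential, since no $B^{\infty}$-data is used), and the right-hand side collapses to $N_{\sigma(B)}(t)-\frac{m+2}{2}$, so the inequality reduces to $[\frac{m+2}{2}]\le\frac{m+2}{2}$, which holds trivially. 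In the second case I would take $A=0$: this annihilates the coefficient of $N_{\sigma(B)}(t)$, the right-hand side collapses to $N_{\sigma(B^{\infty})}(t)-\frac{m+1}{2}$, and the inequality reduces to $[\frac{m+1}{2}]\le\frac{m+1}{2}$, again trivially true.

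With the hypotheses of Theorem \ref{theorem copy(2)} (as relaxed by Remark \ref{theorem copy(6)}(2)) thereby verified, I would conclude $h=\widetilde{h}$, $\beta=\widetilde{\beta}$, $\gamma=\widetilde{\gamma}$ and $q=\widetilde{q}$ a.e. on $[0,\pi]$; since $\beta,\gamma$ were already prescribed, this is precisely the assertion that $h$ and $q$ on $[0,\pi]$ are uniquely determined. I do not expect a genuine obstacle: the only thing requiring care is the bookkeeping in the key step---choosing $A$ so that exactly one of the two spectral densities drops out, and confirming that the finite number of omitted eigenvalues (counted with multiplicity) is absorbed by the constant term, i.e. that the relevant floor never exceeds its half-integer bound. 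Everything else is an immediate invocation of machinery already established in the paper.
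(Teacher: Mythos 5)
Your proposal is correct and follows exactly the route the paper intends (the corollary is stated without proof, as an immediate consequence of Theorem \ref{theorem copy(2)} relaxed by Remark \ref{theorem copy(6)}(2)): take $W_{1}=W_{1}^{\infty}=\emptyset$, put $\frac{2d}{\pi}=1$, and choose $A=1$ for the $\sigma(B)$ alternative and $A=0$ for the $\sigma(B^{\infty})$ alternative, so the counting condition reduces to $[\frac{m+2}{2}]\le\frac{m+2}{2}$ and $[\frac{m+1}{2}]\le\frac{m+1}{2}$ respectively. The verification of the hypotheses and the bookkeeping are exactly as in the paper.
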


\begin{corollary}
Let $d=\frac{\pi }{2}.$ Assume that $q$ on $\left[ \frac{\pi }{2},\pi \right]
$ and $\beta $ are known a priori, then $\sigma \left( B\right) $ uniquely
determines $h,$ $\gamma $ and $q$ a.e. on $\left[ 0,\pi \right] .$
\end{corollary}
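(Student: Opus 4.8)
The plan is to read this corollary as the limiting specialization of Theorem \ref{theorem copy(2)} (in the improved form of Remark \ref{theorem copy(6)}(1)) in which the \emph{entire} spectrum $\sigma(B)$ is kept and no normalizing constants at all are prescribed; the unknown $\gamma$ is then recovered precisely because knowing $\beta$ upgrades the growth bound on $F$. First I would set up the usual two-operator comparison: let $\widetilde{B}$ be a second problem with the same fixed $H$ and the same $d=\frac{\pi}{2}$, with $\widetilde{q}=q$ a.e. on $[\frac{\pi}{2},\pi]$ and $\widetilde{\beta}=\beta$ (both known a priori), and assume $\sigma(\widetilde{B})=\sigma(B)$ as sequences repeated with algebraic multiplicity. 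By Lemma \ref{unique by F} it then suffices to prove that the entire function $F(\lambda)$ of $(\ref{FFF})$ vanishes identically, which yields $h=\widetilde{h}$, $\gamma=\widetilde{\gamma}$ and $q=\widetilde{q}$ a.e. on $[0,\pi]$.

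Next I would invoke Hypothesis \ref{hypo1} with the choice $W=\sigma(B)$, each eigenvalue carried with its full multiplicity (so $\gamma_n=m_n$), and $W_1=W^{\infty}=W_1^{\infty}=\emptyset$; then conditions $(\ref{hhypo1})$--$(\ref{hhypo2})$ are vacuous and the only input is the equality of spectra, which forces $m_n=\widetilde{m}_{\widetilde{n}}$ whenever $\lambda_n=\widetilde{\lambda}_{\widetilde{n}}$. With these choices $G_{\Xi}(\lambda)=G_W(\lambda)=\prod_n(1-\lambda/\lambda_n)$ coincides with $\Delta(\lambda)/C_B$ by $(\ref{ch1})$, and by Lemma \ref{F} the quotient $H(\lambda):=F(\lambda)/G_{\Xi}(\lambda)$ is entire; being a quotient of entire functions of order at most $\frac12$, it has order at most $\frac12<1$. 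In the bookkeeping of Remark \ref{theorem copy(6)}(1) this is exactly the counting inequality with $A=1$ and any $\epsilon\in(0,\frac12]$: since $d=\frac{\pi}{2}$ gives $\frac{2d}{\pi}=1$, the required bound $N_W(t)\ge A\,N_{\sigma(B)}(t)+(\frac{2d}{\pi}-A)N_{\sigma(B^{\infty})}(t)-\frac{A}{2}+\epsilon$ collapses to $N_{\sigma(B)}(t)\ge N_{\sigma(B)}(t)-\frac12+\frac12$, which holds for large $t$, so Lemma \ref{number} applies and gives $|G_{\Xi}(iy)|\ge M|y|^{\epsilon}\exp(\pi|\mathrm{Im}\sqrt{iy}|)$.

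The decisive point, and the sole place where $\beta=\widetilde{\beta}$ is used, is the growth of $F$ on the imaginary axis. Because $b=d=\frac{\pi}{2}$ and $\beta=\widetilde{\beta}$, the Wronskian part of $(\ref{flamuda2})$ loses its leading $\sqrt{\lambda}$ and only the $\gamma$-term remains of size $\exp(2|\mathrm{Im}\sqrt{iy}|d)$, so $(\ref{FIY2})$ yields $|F(iy)|=O(\exp(|\mathrm{Im}\sqrt{iy}|\pi))$ with no factor $|y|^{1/2}$ (equivalently, via $(\ref{delte11})$, $|G_{\sigma(B)}(iy)|\sim\frac{b_1}{2|C_B|}|y|^{1/2}\exp(|\mathrm{Im}\sqrt{iy}|\pi)$). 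Hence $|H(iy)|=O(|y|^{-\epsilon})\to0$ as $y$ (real) $\to\infty$, Lemma \ref{proposition} forces $H\equiv0$, i.e. $F\equiv0$, and Lemma \ref{unique by F} completes the recovery of $h$, $\gamma$ and $q$. I expect no genuine obstacle here; the one thing to verify carefully is the balance of exponents: dropping the $|y|^{1/2}$ factor from $|F|$ is precisely what knowing $\beta$ buys and what an unknown $\gamma$ would otherwise cost, and it is exactly enough to absorb the half-power lost in the degree count, so that the full spectrum with \emph{no} eigenvalue discarded suffices to determine the unknown $\gamma$ together with $h$ and $q$.
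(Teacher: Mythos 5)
Your proposal is correct and follows exactly the route the paper intends: the corollary is the specialization of Theorem \ref{theorem copy(2)} in the form of Remark \ref{theorem copy(6)}(1), with $W=\sigma(B)$ carried with full multiplicities, $W_{1}=W^{\infty }=W_{1}^{\infty }=\emptyset $, $A=1$ and $\frac{2d}{\pi }=1$, so that the counting condition reduces to a triviality, while the hypothesis $\beta =\widetilde{\beta }$ is used precisely where you say, namely to replace the bound $\left( \ref{FIY}\right) $ by $\left( \ref{FIY2}\right) $ and cancel the factor $\left\vert y\right\vert ^{1/2}$ against the exponent $\frac{l_{1}}{2}+l_{3}=\epsilon $ coming from Lemma \ref{number}. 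The balance of exponents you flag as the one point to verify does check out, and the remaining steps (entirety of $H$ via Lemma \ref{F}, Lemma \ref{proposition}, Lemma \ref{unique by F}) are applied as in the paper.
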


\subsubsection{Pairs of Eigenvalues and Ratios}

\begin{theorem}
\label{theorem copy(3)}Assume Hypothesis \ref{hypo2} and suppose that $q=%
\widetilde{q}$ a.e. on $\left[ d,\pi \right] ,$%
\begin{equation}
N_{W}(t)+N_{W^{\infty }}(t)\geq AN_{\sigma \left( B\right) }(t)+\left( \frac{%
d}{\pi }-A\right) N_{\sigma \left( B^{\infty }\right) }(t)-\frac{A}{2}+\frac{%
1}{2}+\epsilon  \label{hypo8}
\end{equation}%
for sufficiently large $t\in
\mathbb{R}
,$ where $\epsilon $ is an arbitrary positive constant.$\ $Then $h=%
\widetilde{h},$ $\beta =\widetilde{\beta },$ $\gamma =\widetilde{\gamma }$
and $q=\widetilde{q}$ a.e. on $\left[ 0,\pi \right] .$
\end{theorem}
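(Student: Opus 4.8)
The plan is to adapt, essentially verbatim, the three-step argument proving Theorem \ref{theorem copy(1)} to the case $b=d$, the only structural change being that the relevant Wronskian data are now read off at the right endpoint $x=d+0$ of the interval $[d,\pi]$ on which $q=\widetilde q$. Accordingly I would set $F_{1}(\lambda):=\varphi(d+0,\lambda)-\widetilde\varphi(d+0,\lambda)$ and $F_{2}(\lambda):=\varphi'(d+0,\lambda)-\widetilde\varphi'(d+0,\lambda)$, together with $H_{1}:=F_{1}/G_{\Theta}$ and $H_{2}:=F_{2}/G_{\Theta}$, where $G_{\Theta}:=G_{W}G_{W^{\infty}}$ is the canonical product built from $W$ and $W^{\infty}$ with the multiplicities $k_{n}$ and $k_{n}^{\infty}$ prescribed by Hypothesis \ref{hypo2}.

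First I would show that $H_{1}$ and $H_{2}$ are entire of order less than one. Exactly as in Step 1 of Theorem \ref{theorem copy(1)}, the ratio identities $\left(\ref{hhypo 3}\right)$ and $\left(\ref{hhypo4}\right)$, combined with $H=\widetilde H$, $q=\widetilde q$ a.e. on $[d,\pi]$, and the boundary relations $\left(\ref{3f}\right)$ (resp. their $B^{\infty}$-analogues $\varphi_{\nu}(\pi,\lambda_{n}^{\infty})=0$), force the generalized eigenfunctions to agree at $x=\pi$ together with their $x$-derivatives. An induction on $\nu$ using the inhomogeneous recursion $\left(\ref{7}\right)$ and backward uniqueness for the initial value problem on $[d,\pi]$ then yields $\varphi_{\nu}(x,\lambda_{n})=\widetilde\varphi_{\nu}(x,\lambda_{n})$ throughout $[d,\pi]$, in particular at $x=d+0$. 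Hence $F_{1}$ and $F_{2}$ vanish at each $\lambda_{n}\in\widehat W$ (resp. $\lambda_{n}^{\infty}\in\widehat{W^{\infty}}$) to order at least $k_{n}$ (resp. $k_{n}^{\infty}$), and since $\widehat{\sigma(B)}\cap\widehat{\sigma(B^{\infty})}=\emptyset$ the quotients $H_{1},H_{2}$ are entire.

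Next I would prove $H_{1}\equiv 0$. Applying Lemma \ref{number} to $\Theta$ with $l_{1}=A$, $l_{2}=\frac{d}{\pi}-A$, $l_{3}=-\frac{A}{2}+\frac12+\epsilon$ (so that $\frac{l_{1}}{2}+l_{3}=\frac12+\epsilon$ and $\pi(l_{1}+l_{2})=d$), hypothesis $\left(\ref{hypo8}\right)$ gives a constant $M>0$ with $|G_{\Theta}(iy)|\ge M|y|^{1/2+\epsilon}\exp(d|\mathrm{Im}\sqrt{iy}|)$ for large real $|y|$. On the other hand, Lemma \ref{jjgj} evaluated at $x=d+0$ gives $\varphi(d+0,\lambda)=\beta\cos(\sqrt\lambda\,d)+O(\lambda^{-1/2}\exp(|\mathrm{Im}\sqrt\lambda|d))$ (using $b_{1}+b_{2}=\beta$), whence $|F_{1}(iy)|=\exp(d|\mathrm{Im}\sqrt{iy}|)\bigl(\tfrac{|\beta-\widetilde\beta|}{2}+o(1)\bigr)$. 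Dividing, $|H_{1}(iy)|=O(|y|^{-1/2-\epsilon})\to 0$, so Lemma \ref{proposition} yields $H_{1}\equiv 0$, i.e. $\varphi(d+0,\lambda)\equiv\widetilde\varphi(d+0,\lambda)$ (which already forces $\beta=\widetilde\beta$ by comparing leading terms).

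Finally, using this identity together with the representation $F(\lambda)=\langle\varphi,\widetilde\varphi\rangle|_{x=d+0}$ from Lemma \ref{Fqh}, I would rewrite $H_{2}(\lambda)=-F(\lambda)/(G_{\Theta}(\lambda)\varphi(d+0,\lambda))$, since $F(\lambda)=-\varphi(d+0,\lambda)F_{2}(\lambda)$ once $\varphi(d+0,\lambda)\equiv\widetilde\varphi(d+0,\lambda)$. The estimate $\left(\ref{FIY}\right)$ gives $|F(iy)|=O(|y|^{1/2}\exp(2d|\mathrm{Im}\sqrt{iy}|))$, while $\left(\ref{faid}\right)$ gives $|\varphi(d+0,iy)|=\frac{\beta}{2}\exp(d|\mathrm{Im}\sqrt{iy}|)(1+o(1))$; combined with the lower bound on $|G_{\Theta}(iy)|$ these cancel the exponentials and leave $|H_{2}(iy)|=O(|y|^{-\epsilon})\to 0$. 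Lemma \ref{proposition} then forces $H_{2}\equiv 0$, hence $F_{2}\equiv 0$ and $F\equiv 0$, and Lemma \ref{unique by F} delivers $h=\widetilde h$, $\beta=\widetilde\beta$, $\gamma=\widetilde\gamma$ and $q=\widetilde q$ a.e. on $[0,\pi]$. The one genuinely delicate point is Step 1: correctly propagating the ratio data backward through the coupled hierarchy $\left(\ref{7}\right)$ of generalized eigenfunctions. The other point worth watching is the bookkeeping of the extra $\tfrac12$ in $\left(\ref{hypo8}\right)$, which — unlike the smoothness-driven gain in Theorem \ref{theorem copy(1)} — is precisely what is needed in Step 3 to dominate the factor $|y|^{1/2}$ that $\varphi'(d-0,\lambda)$ contributes to $F$.
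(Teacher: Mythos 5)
Your proposal is correct and follows essentially the same route as the paper's own proof: the same functions $F_{1},F_{2},H_{1},H_{2}$, the same lower bound $\left\vert G_{\Theta }\left( iy\right) \right\vert \geq M\left\vert y\right\vert ^{\epsilon +\frac{1}{2}}\exp \left( \left\vert \text{Im}\sqrt{iy}\right\vert d\right) $ from Lemma \ref{number}, the same asymptotics $\left( \ref{F1111}\right) $ and $\left( \ref{FIY}\right) $, and the same two-stage conclusion via Lemma \ref{proposition} and Lemma \ref{unique by F}. Your expanded Step 1 (propagating the ratio data backward through the hierarchy $\left( \ref{7}\right) $ to get entirety of $H_{1},H_{2}$) is exactly what the paper delegates to the argument of Theorem \ref{theorem copy(1)}.
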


\begin{proof}
Denote%
\begin{equation}
H_{1}\left( \lambda \right) :=\frac{F_{1}\left( \lambda \right) }{G_{\Theta
}\left( \lambda \right) },\text{ }H_{2}\left( \lambda \right) =\frac{%
F_{2}\left( \lambda \right) }{G_{\Theta }\left( \lambda \right) },
\label{F2}
\end{equation}%
where $G_{\Theta }\left( \lambda \right) :=G_{W}\left( \lambda \right)
G_{W^{\infty }}\left( \lambda \right) ,$
\begin{equation*}
G_{W}\left( \lambda \right) :=\prod\limits_{\lambda _{n}\in \widehat{W},n\in
S_{B}}\left( 1-\frac{\lambda }{\lambda _{n}}\right) ^{k_{n}},\text{ }%
G_{W^{\infty }}\left( \lambda \right) :=\prod\limits_{\lambda _{n}^{\infty
}\in \widehat{W^{\infty }},n\in S_{B^{\infty }}}\left( 1-\frac{\lambda }{%
\lambda _{n}^{\infty }}\right) ^{k_{n}^{\infty }}
\end{equation*}%
and%
\begin{equation*}
F_{1}\left( \lambda \right) :=\varphi \left( d+0,\lambda \right) -\widetilde{%
\varphi }\left( d+0,\lambda \right) ,\text{ }F_{2}\left( \lambda \right)
:=\varphi ^{\prime }\left( d+0,\lambda \right) -\widetilde{\varphi }^{\prime
}\left( d+0,\lambda \right) .
\end{equation*}%
In view of Lemma \ref{number} and $\left( \ref{hypo8}\right) ,$ one has
\begin{equation}
\left\vert G_{\Theta }\left( iy\right) \right\vert \geq M\left\vert
y\right\vert ^{\epsilon +\frac{1}{2}}\exp \left( \left\vert \text{Im}\sqrt{iy%
}\right\vert d\right) .  \label{gs2}
\end{equation}%
In addition, from $\left( \ref{1}\right) \ $it is easy to see that
\begin{equation}
\left\vert F_{1}\left( iy\right) \right\vert =\exp \left( \left\vert \text{Im%
}\sqrt{iy}\right\vert d\right) \left( \frac{\left\vert \beta -\widetilde{%
\beta }\right\vert }{2}+o\left( 1\right) \right) \text{ as }y\text{ }\left(
\text{real}\right) \rightarrow \infty .  \label{F1111}
\end{equation}%
Thus it follows from $\left( \ref{gs2}\right) $ and $\left( \ref{F1111}%
\right) $ that
\begin{eqnarray*}
\left\vert H_{1}\left( iy\right) \right\vert &\leq &\left\vert \frac{\exp
\left( \left\vert \text{Im}\sqrt{iy}\right\vert d\right) \left( \frac{%
\left\vert \beta -\widetilde{\beta }\right\vert }{2}+o\left( 1\right)
\right) }{M\left\vert y\right\vert ^{\epsilon +\frac{1}{2}}\exp \left(
\left\vert \text{Im}\sqrt{iy}\right\vert d\right) }\right\vert \\
&=&O\left( y^{-\epsilon -\frac{1}{2}}\right) \text{ as }y\text{ }\left(
\text{real}\right) \rightarrow \infty .
\end{eqnarray*}%
By a similar proof to that of Theorem $\ref{theorem copy(1)},$ we can obtain
that $H_{1}\left( \lambda \right) \equiv 0,$ and thus $F_{1}\left( \lambda
\right) \equiv 0,$ i.e., $\varphi \left( d+0,\lambda \right) \equiv
\widetilde{\varphi }\left( d+0,\lambda \right) $ for all $\lambda \in
\mathbb{C}
$. Then it follows from $\left( \ref{flamuda2}\right) $ and $\left( \ref{F2}%
\right) $ that
\begin{equation*}
H_{2}\left( \lambda \right) =\frac{\left[ \varphi ^{\prime }\left(
d+0,\lambda \right) -\widetilde{\varphi }^{\prime }\left( d+0,\lambda
\right) \right] \varphi \left( d+0,\lambda \right) }{G_{\Theta }\left(
\lambda \right) \varphi \left( d+0,\lambda \right) }=\frac{-F\left( \lambda
\right) }{G_{\Theta }\left( \lambda \right) \varphi \left( d+0,\lambda
\right) }.
\end{equation*}%
Thus by $\left( \ref{faid}\right) ,$ $\left( \ref{FIY}\right) $ and $\left( %
\ref{gs2}\right) ,$ we infer that as $y$ $\left( \text{real}\right)
\rightarrow \infty ,$%
\begin{eqnarray*}
\left\vert H_{2}\left( iy\right) \right\vert &\leq &\left\vert \frac{O\left(
\left\vert y\right\vert ^{\frac{1}{2}}\exp \left( 2\left\vert \text{Im}\sqrt{%
iy}\right\vert d\right) \right) }{M\left\vert y\right\vert ^{\epsilon +\frac{%
1}{2}}\exp \left( \left\vert \text{Im}\sqrt{iy}\right\vert d\right) \frac{%
\beta }{2}\exp \left( \left\vert \text{Im}\sqrt{iy}\right\vert d\right)
\left( 1+o\left( 1\right) \right) }\right\vert \\
&=&O\left( \left\vert y\right\vert ^{-\epsilon }\right) .
\end{eqnarray*}%
Then by the argument of the proof of Theorem $\ref{theorem copy(1)},$ we can
obtain that $F\left( \lambda \right) \equiv 0.$ Now we conclude from Lemma %
\ref{unique by F} that $h=\widetilde{h},$ $\beta =\widetilde{\beta },$ $%
\gamma =\widetilde{\gamma }$ and $q=\widetilde{q}$ a.e. on $\left[ 0,\pi %
\right] .$
\end{proof}

\begin{remark}
$(1)$ If $\beta \ $is known a priori$,$ then by $\left( \ref{FIY2}\right) $
and $\left( \ref{F1111}\right) $ one has
\begin{equation*}
\left\vert F\left( iy\right) \right\vert =O\left( \exp \left( 2\left\vert
\text{Im}\sqrt{iy}\right\vert d\right) \right) \text{ and }F_{1}\left(
iy\right) =o\left( \exp \left( 2\left\vert \text{Im}\sqrt{iy}\right\vert
d\right) \right)
\end{equation*}%
as $y$ $\left( \text{real}\right) \rightarrow \infty .$ In this case, the
assumption $\left( \ref{hypo8}\right) $ can be replaced by%
\begin{equation*}
N_{W}(t)+N_{W^{\infty }}(t)\geq AN_{\sigma \left( B\right) }(t)+\left( \frac{%
d}{\pi }-A\right) N_{\sigma \left( B^{\infty }\right) }(t)-\frac{A}{2}%
+\epsilon .
\end{equation*}%
$(2)$ If $\beta \ $and $\gamma $ are known a priori$,$ by $\left( \ref{ass}%
\right) $ $\left( \text{for }m=-1\right) $ and $\left( \ref{F1111}\right) $
one has
\begin{equation*}
\left\vert F\left( iy\right) \right\vert =o\left( \exp \left( 2\left\vert
\text{Im}\sqrt{iy}\right\vert d\right) \right) \text{ and }F_{1}\left(
iy\right) =o\left( \exp \left( 2\left\vert \text{Im}\sqrt{iy}\right\vert
d\right) \right)
\end{equation*}%
as $y$ $\left( \text{real}\right) \rightarrow \infty .$ In this case, the
assumption $\left( \ref{hypo8}\right) $ can be replaced by%
\begin{equation*}
N_{W}(t)+N_{W^{\infty }}(t)\geq AN_{\sigma \left( B\right) }(t)+\left( \frac{%
d}{\pi }-A\right) N_{\sigma \left( B^{\infty }\right) }(t)-\frac{A}{2}.
\end{equation*}
\end{remark}

\subsection{Case III: $q$ is known on $\left[ b,\protect\pi \right] ,$ where
$b\in \left( 0,d\right) $}

\subsubsection{Pairs of Eigenvalues and Normalizing Constants}

\begin{theorem}
\label{theorem copy(4)}Assume Hypothesis \ref{hypo1} and suppose that $q,$ $%
\widetilde{q}\in $ $C^{m}$ near $b\in \left( 0,d\right) ,$ $q=\widetilde{q}$
a.e. on $\left[ b,\pi \right] ,$ $\beta =\widetilde{\beta },$ $\gamma =%
\widetilde{\gamma }$ and
\begin{eqnarray}
&&N_{W}(t)+N_{W_{1}}(t)+N_{W^{\infty }}(t)+N_{W_{1}^{\infty }}(t)
\label{hypo9} \\
&\geq &AN_{\sigma \left( B\right) }(t)+\left( \frac{2b}{\pi }-A\right)
N_{\sigma \left( B^{\infty }\right) }(t)-\frac{A}{2}-\frac{m+1}{2}  \notag
\end{eqnarray}%
for sufficiently large $t\in
\mathbb{R}
.$ Then $h=\widetilde{h}$ and $q=\widetilde{q}$ a.e. on $\left[ 0,\pi \right]
.$
\end{theorem}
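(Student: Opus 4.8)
The plan is to follow the template of the proof of Theorem \ref{theorem}, reducing everything to showing that the entire function $F(\lambda)$ of (\ref{FFF}) vanishes identically and then invoking Lemma \ref{unique by F}. The single genuinely new ingredient is the correct growth estimate for $F$ along the imaginary axis in the geometry $b\in(0,d)$; once that is available the remainder of the argument is structurally identical to Case I.

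First I would simplify $F$. Since $b\in(0,d)$, Lemma \ref{Fqh} gives $F(\lambda)=\langle\varphi,\widetilde\varphi\rangle|_{x=b}+\langle\varphi,\widetilde\varphi\rangle|_{d-0}^{d+0}$. Substituting the transmission conditions (\ref{DCs}) and using the hypotheses $\beta=\widetilde\beta$, $\gamma=\widetilde\gamma$, a short computation shows $\langle\varphi,\widetilde\varphi\rangle(d+0)=\langle\varphi,\widetilde\varphi\rangle(d-0)$, so the jump term vanishes and $F(\lambda)=\langle\varphi,\widetilde\varphi\rangle|_{x=b}$ with $b<d$. Because $b$ lies strictly below the discontinuity, I would then expand this Wronskian in terms of the fundamental solutions $y_{1,0},y_{2,0}$ based at the origin exactly as in (\ref{faifai}), writing $F$ as a combination with constant coefficients $1,h,\widetilde h,h\widetilde h$ of brackets $y_{i,0}(b,\lambda)\widetilde y_{j,0}'(b,\lambda)-y_{i,0}'(b,\lambda)\widetilde y_{j,0}(b,\lambda)$. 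Since $q=\widetilde q$ on $[b,\pi]$ and $q,\widetilde q\in C^m$ near $b$, the difference $\widehat q$ vanishes to order $m$ at $b$, so Proposition \ref{ooo copy(1)} applies and yields $|F(iy)|=o(|y|^{-(m+1)/2}\exp(2|\text{Im}\sqrt{iy}|b))$ as $y$ (real) $\to\infty$. This is the analogue of Lemma \ref{iy} for the present case and is the step I expect to demand the most care, since one must track the additional inverse powers of $\sqrt\lambda$ supplied by the brackets containing $y_{2,0}$ and confirm that the leading bracket $y_{1,0}\widetilde y_{1,0}$ governs the rate.

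With this estimate in hand I would set $H(\lambda):=F(\lambda)/G_\Xi(\lambda)$, where $G_\Xi$ is the canonical product (\ref{G3}) associated with $W,W_1,W^\infty,W_1^\infty$. Exactly as in the proof of Theorem \ref{theorem}, Hypothesis \ref{hypo1} together with Lemma \ref{F}, which raises the order of vanishing of $F$ at the common eigenvalues in $\widehat{W_1}$ and $\widehat{W_1^\infty}$ by the matched numbers $k_n$ and $k_n^\infty$ of normalizing constants, and the disjointness $\widehat{\sigma(B)}\cap\widehat{\sigma(B^\infty)}=\emptyset$, shows that $H$ is entire; since $F$ and $G_\Xi$ both have order at most $\tfrac{1}{2}$, so does $H$. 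Applying Lemma \ref{number} to the combined zero sequence with $l_1=A$, $l_2=\tfrac{2b}{\pi}-A$, $l_3=-\tfrac{A}{2}-\tfrac{m+1}{2}$, hypothesis (\ref{hypo9}) produces $|G_\Xi(iy)|\ge M|y|^{-(m+1)/2}\exp(2|\text{Im}\sqrt{iy}|b)$; its exponential rate $\pi(l_1+l_2)=2b$ and polynomial rate $l_1/2+l_3=-(m+1)/2$ are matched precisely to the bound on $F$.

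Dividing the two estimates gives $|H(iy)|=o(1)$ as $y$ (real) $\to\infty$, so Lemma \ref{proposition} forces $H\equiv0$ and hence $F\equiv0$. Lemma \ref{unique by F} then delivers $q=\widetilde q$ a.e. on $[0,\pi]$ and $h=\widetilde h$ (the remaining equalities $\beta=\widetilde\beta$, $\gamma=\widetilde\gamma$, $d=\widetilde d$ being assumed or fixed), which is the assertion of the theorem. The crux of the whole argument is the jump cancellation of the first step: it is exactly what makes the extra hypotheses $\beta=\widetilde\beta$, $\gamma=\widetilde\gamma$ indispensable in Case III, and it is what lets the estimate for $F$ be reduced to the subinterval $[0,b]$ lying below the discontinuity.
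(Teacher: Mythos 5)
Your proposal is correct and follows essentially the same route as the paper: the jump term in Lemma \ref{Fqh} cancels under $\beta=\widetilde{\beta}$, $\gamma=\widetilde{\gamma}$ so that $F(\lambda)=\left.\left\langle \varphi,\widetilde{\varphi}\right\rangle\right\vert_{x=b}$ (this is exactly $(\ref{FLAMUDA3})$), the decay $\left\vert F(iy)\right\vert=o\bigl(\left\vert y\right\vert^{-\frac{m+1}{2}}\exp(2\left\vert \text{Im}\sqrt{iy}\right\vert b)\bigr)$ comes from $(\ref{faifai})$ and Proposition \ref{ooo copy(1)}, and the conclusion follows by dividing by $G_{\Xi}$, invoking Lemma \ref{number} with the stated constants, Lemma \ref{proposition}, and Lemma \ref{unique by F}. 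No gaps.
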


\begin{proof}
Denote
\begin{equation*}
H\left( \lambda \right) :=\frac{F\left( \lambda \right) }{G_{\Xi }\left(
\lambda \right) },
\end{equation*}%
where $G_{\Xi }\left( \lambda \right) $ is similarly defined as in $\left( %
\ref{G3}\right) $ and $F\left( \lambda \right) $ is defined by $\left( \ref%
{FFF}\right) .$ Then it follows from Lemma \ref{Fqh} that if $\beta =%
\widetilde{\beta },$ $\gamma =\widetilde{\gamma },$
\begin{equation}
F\left( \lambda \right) =\left. \left\langle \varphi \left( x,\lambda
\right) ,\widetilde{\varphi }\left( x,\lambda \right) \right\rangle
\right\vert _{x=b}.\text{ }  \label{FLAMUDA3}
\end{equation}%
In addition, if $q=\tilde{q}$ a.e. on $\left[ b,\pi \right] ,$ and $q,$ $%
\tilde{q}\in C^{m}$ near $b\in \left( 0,d\right) ,$ one observes from $%
\left( \ref{faifai}\right) $ and Proposition \ref{ooo copy(1)} that
\begin{equation}
\left\vert F\left( iy\right) \right\vert =o\left( \left\vert y\right\vert ^{-%
\frac{m+1}{2}}\exp \left( 2\left\vert \text{Im}\sqrt{iy}\right\vert b\right)
\right) \text{ as }y\text{ }\left( \text{real}\right) \rightarrow \infty
\text{.}  \label{FIY3}
\end{equation}%
By Lemma \ref{number} and $\left( \ref{hypo9}\right) ,$ we have
\begin{equation*}
\left\vert G_{\Xi }\left( iy\right) \right\vert \geq M\left\vert
y\right\vert ^{-\frac{m+1}{2}}\exp \left( 2\left\vert \text{Im}\sqrt{iy}%
\right\vert b\right) .
\end{equation*}%
Therefore,%
\begin{equation*}
\left\vert H\left( iy\right) \right\vert \leq \left\vert \frac{o\left(
\left\vert y\right\vert ^{-\frac{m+1}{2}}\exp \left( 2\left\vert \text{Im}%
\sqrt{iy}\right\vert b\right) \right) }{M\left\vert y\right\vert ^{-\frac{m+1%
}{2}}\exp \left( 2\left\vert \text{Im}\sqrt{iy}\right\vert b\right) }%
\right\vert =o\left( 1\right) \text{ as }y\text{ }\left( \text{real}\right)
\rightarrow \infty .
\end{equation*}%
This implies that $H\left( \lambda \right) \equiv 0\ $and thus $F\left(
\lambda \right) \equiv 0$ for all $\lambda \in
\mathbb{C}
$ by the argument of the proof of Theorem \ref{theorem}$.$ Then we conclude
the statement of this theorem from Lemma \ref{unique by F}.
\end{proof}

\subsubsection{Pairs of Eigenvalues and Ratios}

\begin{theorem}
\label{theorem copy(5)}Assume Hypothesis \ref{hypo2} and suppose that $q=%
\widetilde{q}$ a.e. on $\left[ d,\pi \right] ,$ $\beta =\widetilde{\beta },$
$\gamma =\widetilde{\gamma }$ and%
\begin{equation}
N_{W}(t)+N_{W^{\infty }}(t)\geq AN_{\sigma \left( B\right) }(t)+\left( \frac{%
b}{\pi }-A\right) N_{\sigma \left( B^{\infty }\right) }(t)-\frac{A}{2}
\label{hypo10}
\end{equation}%
for sufficiently large $t\in
\mathbb{R}
,$ where $\epsilon $ is an arbitrary positive constant.$\ $Then $h=%
\widetilde{h}$ and $q=\widetilde{q}$ a.e. on $\left[ 0,\pi \right] .$
\end{theorem}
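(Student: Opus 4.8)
The plan is to run the two–step ratio argument of Theorems \ref{theorem copy(1)} and \ref{theorem copy(3)}, but anchored at the interior point $b\in(0,d)$, exactly as the normalizing–constant result Theorem \ref{theorem copy(4)} is anchored there. Because $\beta=\widetilde\beta$ and $\gamma=\widetilde\gamma$, the jump contributions in Lemma \ref{Fqh} cancel identically (a one–line computation with the discontinuity conditions in (\ref{7})), so that $F(\lambda)=\left.\left\langle\varphi(x,\lambda),\widetilde\varphi(x,\lambda)\right\rangle\right|_{x=b}$ as in (\ref{FLAMUDA3}). I then introduce
\[
F_1(\lambda):=\varphi(b,\lambda)-\widetilde\varphi(b,\lambda),\quad F_2(\lambda):=\varphi^{\prime}(b,\lambda)-\widetilde\varphi^{\prime}(b,\lambda),\quad H_i(\lambda):=\frac{F_i(\lambda)}{G_\Theta(\lambda)},
\]
with $G_\Theta=G_WG_{W^\infty}$ as in (\ref{GG}).

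First I would show that $H_1$ and $H_2$ are entire of order less than $\tfrac12$. For $\lambda_n\in\widehat W$ the equalities $\kappa_{n+\nu}=\widetilde\kappa_{n+\nu}$, together with (\ref{3f}) and $H=\widetilde H$, force $\varphi_\nu$ and $\widetilde\varphi_\nu$ to share Cauchy data at $\pi$; propagating the recursion (\ref{7}) leftward from $\pi$ with $q=\widetilde q$ on $[b,\pi]$, then crossing $x=d$ by means of the common jump data $\beta=\widetilde\beta,\gamma=\widetilde\gamma$, yields $\varphi_\nu(b,\lambda_n)=\widetilde\varphi_\nu(b,\lambda_n)$ and $\varphi_\nu^{\prime}(b,\lambda_n)=\widetilde\varphi_\nu^{\prime}(b,\lambda_n)$ for $\nu=0,\dots,k_n-1$; the $W^\infty$–points are treated the same way (there $\varphi_\nu(\pi,\lambda_n^\infty)=0$ and $\kappa^\infty$ matches $\varphi_\nu^{\prime}(\pi,\cdot)$). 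Since $\widehat{\sigma(B)}\cap\widehat{\sigma(B^\infty)}=\emptyset$, each $H_i$ is entire. This is precisely where the a priori knowledge of $\beta,\gamma$ is indispensable: the propagation must survive the discontinuity, and without matching jump data one cannot pass from $d+0$ to $d-0$.

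Next I would establish $H_1\equiv0$. By (\ref{1}), for $b<d$ both $\varphi(b,\lambda)$ and $\widetilde\varphi(b,\lambda)$ carry the same leading term $\cos(\sqrt\lambda b)$, so these cancel and $|F_1(iy)|=O(|y|^{-1/2}\exp(|\mathrm{Im}\sqrt{iy}|b))$. Feeding (\ref{hypo10}) into Lemma \ref{number} (with $l_1=A$, $l_2=\tfrac{b}{\pi}-A$, $l_3=-\tfrac{A}{2}$, so that the power $l_1/2+l_3=0$) gives $|G_\Theta(iy)|\ge M\exp(|\mathrm{Im}\sqrt{iy}|b)$; hence $|H_1(iy)|=O(|y|^{-1/2})\to0$ and Lemma \ref{proposition} forces $H_1\equiv0$, i.e.\ $\varphi(b,\lambda)\equiv\widetilde\varphi(b,\lambda)$. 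Using this, $F=-\varphi(b,\lambda)F_2(\lambda)$, so $H_2=-F/(\varphi(b,\cdot)G_\Theta)$. From (\ref{faifai}) and Proposition \ref{ooo copy(1)} (taken with $m=-1$) one has $|F(iy)|=o(\exp(2|\mathrm{Im}\sqrt{iy}|b))$, while (\ref{faib}) gives $|\varphi(b,iy)|\sim\tfrac12\exp(|\mathrm{Im}\sqrt{iy}|b)$; the two exponentials in the denominator multiply to $\exp(2|\mathrm{Im}\sqrt{iy}|b)$, whence $|H_2(iy)|=o(1)$. Then $H_2\equiv0$ and $F\equiv0$, and Lemma \ref{unique by F} delivers $h=\widetilde h$ and $q=\widetilde q$ a.e.\ on $[0,\pi]$ (the values of $\beta,\gamma$ being assumed).

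I expect the growth estimate in the last step to be the main obstacle. The naive Wronskian bound only gives $|F(iy)|=O(|y|^{1/2}\exp(2|\mathrm{Im}\sqrt{iy}|b))$, which would make $|H_2(iy)|=O(|y|^{1/2})$ diverge; one genuinely needs the cancellation recorded in $F=-\int_0^b\hat q\,\varphi\widetilde\varphi\,dx-\hat h$ and the oscillatory (Riemann–Lebesgue type) decay furnished by Proposition \ref{ooo copy(1)} to sharpen it to $o(\exp(2|\mathrm{Im}\sqrt{iy}|b))$. That sharp bound, together with the cross–discontinuity propagation underlying the entirety of $H_1,H_2$, are the two places demanding genuine care; the remaining estimates are routine applications of Lemmas \ref{number} and \ref{proposition}, and the decays being strict means no $\epsilon$–margin is needed in (\ref{hypo10}).
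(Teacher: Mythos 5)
Your proposal follows the paper's own proof essentially verbatim: the same splitting into $F_1,F_2$ divided by $G_\Theta=G_WG_{W^\infty}$, the same entirety argument via propagation of the Cauchy data from $\pi$ across the jump using $\beta=\widetilde\beta$, $\gamma=\widetilde\gamma$, the same bounds $|F_1(iy)|=O(|y|^{-1/2}\exp(|\mathrm{Im}\sqrt{iy}|b))$ and $|G_\Theta(iy)|\ge M\exp(|\mathrm{Im}\sqrt{iy}|b)$, and the same final step $H_2=-F/(G_\Theta\varphi(b,\cdot))$ with the $o$-estimate on $F(iy)$ coming from $\left(\ref{faifai}\right)$ and Proposition \ref{ooo copy(1)} with $m=-1$. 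The argument is correct and matches the paper's route, including the observation that no $\epsilon$-margin is needed in $\left(\ref{hypo10}\right)$.
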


\begin{proof}
Denote%
\begin{equation}
H_{1}\left( \lambda \right) :=\frac{F_{1}\left( \lambda \right) }{G_{\Theta
}\left( \lambda \right) },\text{ }H_{2}\left( \lambda \right) =\frac{%
F_{2}\left( \lambda \right) }{G_{\Theta }\left( \lambda \right) },
\label{H2}
\end{equation}%
where $G_{\Theta }\left( \lambda \right) :=G_{W}\left( \lambda \right)
G_{W^{\infty }}\left( \lambda \right) ,$
\begin{equation*}
G_{W}\left( \lambda \right) :=\prod\limits_{\lambda _{n}\in \widehat{W},n\in
S_{B}}\left( 1-\frac{\lambda }{\lambda _{n}}\right) ^{k_{n}},\text{ }%
G_{W^{\infty }}\left( \lambda \right) :=\prod\limits_{\lambda _{n}^{\infty
}\in \widehat{W^{\infty }},n\in S_{B^{\infty }}}\left( 1-\frac{\lambda }{%
\lambda _{n}^{\infty }}\right) ^{k_{n}^{\infty }}
\end{equation*}%
and%
\begin{equation*}
F_{1}\left( \lambda \right) :=\varphi \left( b,\lambda \right) -\widetilde{%
\varphi }\left( b,\lambda \right) ,\text{ }F_{2}\left( \lambda \right)
:=\varphi ^{\prime }\left( b,\lambda \right) -\widetilde{\varphi }^{\prime
}\left( b,\lambda \right) .
\end{equation*}%
By a similar method to that of Theorem \ref{theorem copy(1)}, one can easily
deduce that $H_{1}\left( \lambda \right) $ and $H_{2}\left( \lambda \right) $
are entire functions of order less than $\frac{1}{2}$ from the facts $\left( %
\ref{ratios}\right) ,$ $\left( \ref{7}\right) ,$ $\left( \ref{3f}\right) ,$ $%
\left( \ref{hhypo 3}\right) $, $H=\widetilde{H}$, $\beta =\widetilde{\beta }%
, $ $\gamma =\widetilde{\gamma }$ and $q=\widetilde{q}$ a.e. on $\left[
b,\pi \right] .$

In view of Lemma \ref{number} and $\left( \ref{hypo10}\right) ,$ one has
\begin{equation}
\left\vert G_{\Theta }\left( iy\right) \right\vert \geq M\exp \left(
\left\vert \text{Im}\sqrt{iy}\right\vert b\right) .  \label{GTHETE}
\end{equation}%
By $\left( \ref{1}\right) $ we also infer that%
\begin{equation*}
\left\vert F_{1}\left( iy\right) \right\vert =O\left( \left\vert
y\right\vert ^{-\frac{1}{2}}\exp \left( \left\vert \text{Im}\sqrt{iy}%
\right\vert b\right) \right) \text{ as }y\text{ }\left( \text{real}\right)
\rightarrow \infty .
\end{equation*}%
Therefore,
\begin{equation*}
\left\vert H_{1}\left( iy\right) \right\vert \leq \left\vert \frac{O\left(
\left\vert y\right\vert ^{-\frac{1}{2}}\exp \left( \left\vert \text{Im}\sqrt{%
iy}\right\vert b\right) \right) }{M\exp \left( \left\vert \text{Im}\sqrt{iy}%
\right\vert b\right) }\right\vert =O\left( y^{-\frac{1}{2}}\right) \text{ }
\end{equation*}%
as $y$ $\left( \text{real}\right) \rightarrow \infty .$ Now by Lemma \ref%
{proposition}$,$ we can obtain that $H_{1}\left( \lambda \right) \equiv 0,$
i.e., $\varphi \left( b,\lambda \right) \equiv \widetilde{\varphi }\left(
b,\lambda \right) $ for all $\lambda \in
\mathbb{C}
$. Then it follows from $\left( \ref{FLAMUDA3}\right) $ and $\left( \ref{H2}%
\right) $ that
\begin{equation*}
H_{2}\left( \lambda \right) =\frac{\left[ \varphi ^{\prime }\left( b,\lambda
\right) -\widetilde{\varphi }^{\prime }\left( b,\lambda \right) \right]
\varphi \left( b,\lambda \right) }{G_{\Theta }\left( \lambda \right) \varphi
\left( b,\lambda \right) }=\frac{-F\left( \lambda \right) }{G_{\Theta
}\left( \lambda \right) \varphi \left( b,\lambda \right) }.
\end{equation*}%
Thus by $\left( \ref{faib}\right) ,$ $\left( \ref{FIY3}\right) $ $\left(
\text{for }m=-1\right) $ and $\left( \ref{GTHETE}\right) ,$ we have
\begin{equation*}
\left\vert H_{2}\left( iy\right) \right\vert \leq \left\vert \frac{o\left(
\exp \left( 2\left\vert \text{Im}\sqrt{iy}\right\vert b\right) \right) }{%
M\exp \left( \left\vert \text{Im}\sqrt{iy}\right\vert b\right) \frac{1}{2}%
\exp \left( \left\vert \text{Im}\sqrt{iy}\right\vert b\right) \left(
1+o\left( 1\right) \right) }\right\vert =o\left( 1\right) .
\end{equation*}%
Then by Lemma \ref{proposition} we infer that $H_{2}\left( \lambda \right)
\equiv 0$ and then $F\left( \lambda \right) \equiv 0$ for all $\lambda \in
\mathbb{C}
.$ Now we can conclude from Lemma \ref{unique by F} that $h=\widetilde{h}$
and $q=\widetilde{q}$ a.e. on $\left[ 0,\pi \right] .$ The proof is thus
completed.
\end{proof}




\section*{Appendix}

For the self-adjoint classical Sturm-Liouville operators, an interesting
uniqueness result is to assume that the potential $q$ satisfies a local
smoothness condition so that some eigenvalues and norming constants can be
missing. While in \cite{ges2,wei,zhaoying} the key technique relies on the
high-energy asymptotic expansion of the Weyl $m$-function \cite{weyl}, in
our non-self-adjoint setting, the key to prove the uniqueness problems
(Theorem \ref{theorem}, Theorem \ref{theorem copy(4)}, Remark $\ref{theorem
copy(6)},$ Corollary \ref{corollary copy(1)}--\ref{corollary copy(3)}) will
be Proposition \ref{ooo copy(1)}, to be established below.

\begin{definition}
\label{yyy}For $i=1,2,$ let $y_{i,r}(x,\lambda )$ and $\widetilde{y}%
_{i,r}(x,\lambda )$ be solutions of $\left( \ref{ly}\right) $ corresponding
to the potential $q$ and $\widetilde{q},$ respectively, where $%
y_{i,r}(x,\lambda )\ $and $\widetilde{y}_{i,r}(x,\lambda )$ satisfy the
initial conditions
\begin{eqnarray*}
y_{1,r}(r,\lambda ) &=&y_{2,r}^{\prime }(r,\lambda )=1,\text{ }%
y_{2,r}(r,\lambda )=y_{1,r}^{\prime }(r,\lambda )=0,\  \\
\widetilde{y}_{1,r}(r,\lambda ) &=&\widetilde{y}_{2,r}^{\prime }(r,\lambda
)=1,\text{ }\widetilde{y}_{2,r}(r,\lambda )=\widetilde{y}_{1,r}^{\prime
}(r,\lambda )=0,\ r\in \left[ 0,\pi \right) .
\end{eqnarray*}%
For simplicity, denote $y_{1}(x,\lambda ):=y_{1,0}(x,\lambda ),$ $%
y_{2}(x,\lambda ):=y_{2,0}(x,\lambda ),$ $\widetilde{y}_{1}(x,\lambda ):=%
\widetilde{y}_{1,0}(x,\lambda ),$ $\widetilde{y}_{2}(x,\lambda ):=\widetilde{%
y}_{2,0}(x,\lambda ).$
\end{definition}

\begin{proposition}
\label{ooo copy(1)}Let $x_{0}\in \left( r,\pi \right] \ $where $r\in \left[
0,\pi \right) $ and assume that $q,$ $\widetilde{q}\in $ $C^{m}\left[
x_{0}-\delta ,x_{0}\right] \ $for some sufficiently small $\delta >0$ and
some $m\in
\mathbb{N}
_{0}.$ If $q_{-}^{(j)}(x_{0})=\widetilde{q}_{-}^{(j)}(x_{0})$ for $%
j=0,1,\ldots ,m,$ then
\begin{eqnarray}
&&  \label{1111} \\
y_{1,r}(x_{0},\lambda )\widetilde{y}_{1,r}^{\prime }(x_{0},\lambda
)-y_{1,r}^{\prime }(x_{0},\lambda )\widetilde{y}_{1,r}(x_{0},\lambda )
&=&o\left( \frac{\exp \left( 2\left\vert \text{Im}\sqrt{\lambda }\right\vert
\left( x_{0}-r\right) \right) }{\left\vert \sqrt{\lambda }\right\vert ^{m+1}}%
\right) ,  \notag \\
&&  \label{1112} \\
y_{1,r}(x_{0},\lambda )\widetilde{y}_{2,r}^{\prime }(x_{0},\lambda
)-y_{1,r}^{\prime }(x_{0},\lambda )\widetilde{y}_{2,r}(x_{0},\lambda )
&=&o\left( \frac{\exp \left( 2\left\vert \text{Im}\sqrt{\lambda }\right\vert
\left( x_{0}-r\right) \right) }{\left\vert \sqrt{\lambda }\right\vert ^{m+2}}%
\right) ,  \notag \\
&&  \label{1113} \\
\widetilde{y}_{1,r}^{\prime }(x_{0},\lambda )y_{2,r}(x_{0},\lambda )-%
\widetilde{y}_{1,r}(x_{0},\lambda )y_{2,r}^{\prime }(x_{0},\lambda )
&=&o\left( \frac{\exp \left( 2\left\vert \text{Im}\sqrt{\lambda }\right\vert
\left( x_{0}-r\right) \right) }{\left\vert \sqrt{\lambda }\right\vert ^{m+2}}%
\right) ,  \notag \\
&&  \label{1114} \\
y_{2,r}(x_{0},\lambda )\widetilde{y}_{2,r}^{\prime }(x_{0},\lambda
)-y_{2,r}^{\prime }(x_{0},\lambda )\widetilde{y}_{2,r}(x_{0},\lambda )
&=&o\left( \frac{\exp \left( 2\left\vert \text{Im}\sqrt{\lambda }\right\vert
\left( x_{0}-r\right) \right) }{\left\vert \sqrt{\lambda }\right\vert ^{m+3}}%
\right)  \notag
\end{eqnarray}%
as $\left\vert \lambda \right\vert \rightarrow \infty $ in $\Lambda _{\zeta
}:=\left\{ \lambda \in
\mathbb{C}
:\zeta <\text{Arg}\left( \lambda \right) <\pi -\zeta \text{ for }\zeta
>0\right\} .$
\end{proposition}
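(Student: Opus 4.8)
The four left-hand sides are the cross-Wronskians $\langle y_{i,r},\widetilde y_{j,r}\rangle(x_{0},\lambda)$ of solutions belonging to the two potentials $q$ and $\widetilde q$, and my plan is to exploit the Lagrange identity for such an object. Since $-y_{i,r}''+qy_{i,r}=\lambda y_{i,r}$ and $-\widetilde y_{j,r}''+\widetilde q\,\widetilde y_{j,r}=\lambda\widetilde y_{j,r}$, a direct computation gives $\frac{d}{dx}\langle y_{i,r},\widetilde y_{j,r}\rangle=-\widehat q\,y_{i,r}\widetilde y_{j,r}$, whence
\[
\langle y_{i,r},\widetilde y_{j,r}\rangle(x_{0},\lambda)=\langle y_{i,r},\widetilde y_{j,r}\rangle(r,\lambda)-\int_{r}^{x_{0}}\widehat q(x)\,y_{i,r}(x,\lambda)\widetilde y_{j,r}(x,\lambda)\,dx .
\]
The boundary value at $x=r$ is read off from the initial data of Definition \ref{yyy}: it equals $0$ for the pairs $(i,j)=(1,1)$ and $(2,2)$, and $\pm1$ for $(1,2)$ and $(2,1)$. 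In the sector $\Lambda_{\zeta}$ one has $|\mathrm{Im}\sqrt\lambda|\ge\sin(\zeta/2)\,|\sqrt\lambda|\to\infty$, and since $x_{0}>r$ the quantity $e^{2|\mathrm{Im}\sqrt\lambda|(x_{0}-r)}/|\sqrt\lambda|^{N}\to\infty$ for every $N$; hence the bounded boundary terms are automatically $o$ of each asserted right-hand side, and it remains only to estimate the integral.

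First I would localize to the window on which $q$ is smooth. Using the a priori bounds $|y_{1,r}|,|\widetilde y_{1,r}|\le Ce^{|\mathrm{Im}\sqrt\lambda|(x-r)}$ and $|y_{2,r}|,|\widetilde y_{2,r}|\le C|\sqrt\lambda|^{-1}e^{|\mathrm{Im}\sqrt\lambda|(x-r)}$ (valid for merely $L^{1}$ potentials and consistent with Lemma \ref{jjgj}), the contribution of $[r,x_{0}-\delta]$ is bounded by $C|\sqrt\lambda|^{-k}e^{-2|\mathrm{Im}\sqrt\lambda|\delta}e^{2|\mathrm{Im}\sqrt\lambda|(x_{0}-r)}$, where $k\in\{0,1,2\}$ counts the factors of type $y_{2}$. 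The factor $e^{-2|\mathrm{Im}\sqrt\lambda|\delta}$ decays faster than any power, so this part is negligible against every claimed bound, and only the window $[x_{0}-\delta,x_{0}]$, where $q,\widetilde q\in C^{m}$, survives.

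On that window I would insert the asymptotic (transmutation) representation of the fundamental solutions to expose the oscillation. To leading order $y_{i,r}\widetilde y_{j,r}$ equals $|\sqrt\lambda|^{-k}$ times a product of $\cos(\sqrt\lambda(x-r))$ and $\sin(\sqrt\lambda(x-r))$, whose dominant part in $\Lambda_{\zeta}$ is a constant multiple of $|\sqrt\lambda|^{-k}e^{-2i\sqrt\lambda(x-r)}$; here $k=0$ for $(1,1)$, $k=1$ for $(1,2)$ and $(2,1)$, and $k=2$ for $(2,2)$, which is exactly the origin of the shifts $m+1,m+2,m+3$. The reduced task is to show $\int_{x_{0}-\delta}^{x_{0}}\widehat q(x)\,g(x,\lambda)\,e^{-2i\sqrt\lambda(x-r)}\,dx=o\bigl(|\sqrt\lambda|^{-(m+1)}e^{2|\mathrm{Im}\sqrt\lambda|(x_{0}-r)}\bigr)$ for the relevant smooth coefficients $g$. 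Integrating by parts $m$ times, every boundary contribution at $x_{0}$ carries a factor $\widehat q^{(\ell)}(x_{0})=0$ ($0\le\ell\le m$) by the hypothesis $q_{-}^{(j)}(x_{0})=\widetilde q_{-}^{(j)}(x_{0})$, while the contributions at $x_{0}-\delta$ carry the smaller exponential $e^{2|\mathrm{Im}\sqrt\lambda|(x_{0}-\delta-r)}$ and are negligible as above; each differentiation costs one power $(2i\sqrt\lambda)^{-1}$, producing $|\sqrt\lambda|^{-m}$. The surviving integrand is continuous and vanishes at $x_{0}$, so a concentration estimate against the weight $\int_{x_{0}-\delta}^{x_{0}}e^{2|\mathrm{Im}\sqrt\lambda|(x-r)}\,dx\le(2|\mathrm{Im}\sqrt\lambda|)^{-1}e^{2|\mathrm{Im}\sqrt\lambda|(x_{0}-r)}$ (split off $[x_{0}-\eta,x_{0}]$, where the integrand is $<\epsilon$, from the rest, which is exponentially suppressed) supplies the final gain $o(|\sqrt\lambda|^{-1})$. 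Combining these factors yields precisely $o\bigl(|\sqrt\lambda|^{-(m+1+k)}e^{2|\mathrm{Im}\sqrt\lambda|(x_{0}-r)}\bigr)$, i.e.\ the four stated rates.

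The hard part will be the bookkeeping of the remainder terms of the solution expansions, so that they too produce the full $o(|\sqrt\lambda|^{-(m+1)})$ gain rather than a crude $O(|\sqrt\lambda|^{-1})$: the leading-order asymptotics of Lemma \ref{jjgj} alone are insufficient, and one must work with an expansion (or transmutation kernel) differentiable enough on $[x_{0}-\delta,x_{0}]$ to survive $m$ integrations by parts, while recalling that only $C^{m}$ (not $C^{m+1}$) smoothness is available—so the last, non-differentiable step must be closed by the concentration/Riemann--Lebesgue-type estimate instead of a further integration by parts. The one place demanding genuine care is feeding the incoming Cauchy data at $x_{0}-\delta$, propagated through the merely-$L^{1}$ region $[r,x_{0}-\delta]$, into this smooth-window expansion; once that is controlled, the four estimates follow uniformly in $\Lambda_{\zeta}$.
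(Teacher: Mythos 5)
Your skeleton is sound and, in its localization step, coincides with the paper's: the paper also transfers everything to the window $[x_{0}-\delta,x_{0}]$ by writing $y_{i,r}$ in terms of the fundamental system based at $x_{0}-\delta$ (its identity (\ref{yyyy}) with the coefficients $B_{1},\dots,B_{4}$ plays exactly the role of your transfer of Cauchy data through the merely-$L^{1}$ region), and it closes the final, non-differentiable step by the same Riemann--Lebesgue/concentration estimate you describe (its treatment of $I_{1}$ in (\ref{cos})--(\ref{sin}), using $|\mathrm{Im}\sqrt{\lambda}|\geq|\sqrt{\lambda}|\sin(\zeta/2)$ in $\Lambda_{\zeta}$). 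Where you genuinely diverge is in the mechanism for killing the oscillatory terms of order $|\sqrt{\lambda}|^{-k}$ with $k\leq m$: you propose the Lagrange identity $\frac{d}{dx}\langle y_{i,r},\widetilde{y}_{j,r}\rangle=-\widehat{q}\,y_{i,r}\widetilde{y}_{j,r}$ followed by $m$-fold integration by parts against $e^{-2i\sqrt{\lambda}(x-r)}$, with the boundary terms at $x_{0}$ annihilated by $\widehat{q}^{(j)}(x_{0})=0$. The paper instead extends both potentials past $x_{0}$ by the \emph{common} Taylor polynomial $s(x)$, observes that the cross-Wronskian is then constant on $[x_{0},x_{0}+\delta]$, and reads off from its explicit expansion that the coefficients $r_{k}$ and $u_{k}-z_{k}$ of all oscillatory terms vanish identically there, hence at $x_{0}$ by continuity (see (\ref{equal})). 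Both devices use the hypothesis $q_{-}^{(j)}(x_{0})=\widetilde{q}_{-}^{(j)}(x_{0})$ legitimately; yours spends it term by term in boundary contributions, the paper's spends it once, algebraically.

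The gap is the step you yourself flag and defer. To integrate by parts $m$ times you need the product $y_{i}\widetilde{y}_{j}$ on the window expanded as $\sum_{\ell}(\sqrt{\lambda})^{-\ell}$ times oscillatory factors with coefficient functions of explicitly controlled smoothness (roughly $C^{m+1-\ell}$, so that the term at depth $\ell$ survives the $m-\ell$ differentiations it still owes), \emph{together with} a remainder already small to the full order $o(|\sqrt{\lambda}|^{-(m+1+k)}e^{2|\mathrm{Im}\sqrt{\lambda}|(x_{0}-r)})$ after one integration against $\widehat{q}$. Lemma \ref{jjgj} gives only the leading term, and the iterated-integral corrections $S_{p}$, $C_{p}$ are themselves oscillatory integrals of $q$, so the expansion must be generated recursively with its smoothness bookkeeping ($f_{p,j}\in C^{m+p-j+1}$) and with tail bounds for $\sum_{p\geq m+3}S_{p}$ --- this is precisely the content of Lemmas \ref{y2} and \ref{yy2} and Remarks \ref{yy2 copy(1)} and \ref{yy2 copy(3)}, i.e.\ the bulk of the appendix. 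Nothing in your plan would fail, and the arithmetic of the gains ($m$ from integration by parts, one from the weight $\int e^{2|\mathrm{Im}\sqrt{\lambda}|(x-r)}dx\lesssim e^{2|\mathrm{Im}\sqrt{\lambda}|(x_{0}-r)}/|\mathrm{Im}\sqrt{\lambda}|$, plus $k$ from the factors of $y_{2}$) does reproduce the exponents $m+1$, $m+2$, $m+3$; but as written the proposal asserts the existence of the needed expansion rather than constructing it, so the decisive lemma of the proof is still missing.
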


\begin{remark}
For $f\in $ $C^{m}\left[ x_{0}-\delta ,x_{0}\right] ,$ we adopt following
notations in this section: $\ $
\begin{eqnarray*}
f_{-}^{(0)}(x_{0}) &:&=f(x_{0}),\text{ }f_{-}^{(1)}(x_{0}):=\lim\limits_{x%
\rightarrow x_{0}^{-}}\frac{f(x)-f(x_{0})}{x-x_{0}}, \\
f_{-}^{(j)}(x_{0}) &:&=\lim\limits_{x\rightarrow x_{0}^{-}}\frac{%
f^{(j-1)}(x)-f_{-}^{(j-1)}(x_{0})}{x-x_{0}}\text{ for }j=2,3,\ldots ,m.
\end{eqnarray*}%
In addition, $f\in $ $C^{m}\left[ x_{0}-\delta ,x_{0}\right] \ $implies $%
\lim\limits_{x\rightarrow x_{0}^{-}}f^{(j)}(x)=f_{-}^{(j)}(x_{0})$ for $%
j=0,1,\ldots ,m.$
\end{remark}

The proof of Proposition \ref{ooo copy(1)} will be given at the end of this
appendix after the proof of the following lemma.

\begin{lemma}
\label{ooo}Let $x_{0}\in \left( 0,\pi \right] $ and $q,$ $\widetilde{q}$ $%
\in $ $C^{m}\left[ 0,x_{0}\right] \ $for some $m\in
\mathbb{N}
_{0}.$ If
\begin{equation}
q_{-}^{(j)}(x_{0})=\widetilde{q}_{-}^{(j)}(x_{0})  \label{qq}
\end{equation}%
for $j=0,1,\ldots ,m,$ then
\begin{eqnarray}
y_{2}(x_{0},\lambda )\widetilde{y}_{2}^{\prime }(x_{0},\lambda
)-y_{2}^{\prime }(x_{0},\lambda )\widetilde{y}_{2}(x_{0},\lambda )
&=&o\left( \frac{\exp \left( 2\left\vert \text{Im}\sqrt{\lambda }\right\vert
x_{0}\right) }{\left\vert \sqrt{\lambda }\right\vert ^{m+3}}\right) ,
\label{4} \\
y_{1}(x_{0},\lambda )\widetilde{y}_{1}^{\prime }(x_{0},\lambda
)-y_{1}^{\prime }(x_{0},\lambda )\widetilde{y}_{1}(x_{0},\lambda )
&=&o\left( \frac{\exp \left( 2\left\vert \text{Im}\sqrt{\lambda }\right\vert
x_{0}\right) }{\left( \sqrt{\lambda }\right) ^{m+1}}\right) , \\
y_{1}(x_{0},\lambda )\widetilde{y}_{2}^{\prime }(x_{0},\lambda
)-y_{1}^{\prime }(x_{0},\lambda )\widetilde{y}_{2}(x_{0},\lambda )
&=&o\left( \frac{\exp \left( 2\left\vert \text{Im}\sqrt{\lambda }\right\vert
x_{0}\right) }{\left\vert \sqrt{\lambda }\right\vert ^{m+2}}\right) , \\
\widetilde{y}_{1}^{\prime }(x_{0},\lambda )y_{2}(x_{0},\lambda )-\widetilde{y%
}_{1}(x_{0},\lambda )y_{2}^{\prime }(x_{0},\lambda ) &=&o\left( \frac{\exp
\left( 2\left\vert \text{Im}\sqrt{\lambda }\right\vert x_{0}\right) }{%
\left\vert \sqrt{\lambda }\right\vert ^{m+2}}\right)
\end{eqnarray}%
as $\left\vert \lambda \right\vert \rightarrow \infty $ in the sector $%
\Lambda _{\zeta }.$
\end{lemma}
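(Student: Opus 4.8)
The plan is to read each of the four quantities as a generalized Wronskian $\langle f,g\rangle := fg'-f'g$ (the bracket used in \eqref{w}) of a $q$-solution against a $\tilde q$-solution, and to exploit that such a bracket sees only $\hat q := q-\tilde q$. Since $y_i''=(q-\lambda)y_i$ and $\tilde y_j''=(\tilde q-\lambda)\tilde y_j$ share the same $\lambda$, a direct computation gives $\frac{d}{dt}\langle y_i,\tilde y_j\rangle=-\hat q\,y_i\tilde y_j$. Integrating over $[0,x_0]$ and reading the initial values off Definition \ref{yyy} (so that $\langle y_1,\tilde y_1\rangle(0)=\langle y_2,\tilde y_2\rangle(0)=0$ and $\langle y_1,\tilde y_2\rangle(0)=-\langle y_2,\tilde y_1\rangle(0)=1$), each of the four left-hand sides equals a constant ($0$ or $\pm1$) minus $\int_0^{x_0}\hat q\,y_i\tilde y_j\,dt$. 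Because $\lambda\in\Lambda_\zeta$ forces $|\text{Im}\,\sqrt{\lambda}|\ge\sin(\zeta/2)\,|\sqrt{\lambda}|$, the target quantities $|\sqrt{\lambda}|^{-k}\exp(2|\text{Im}\,\sqrt{\lambda}|\,x_0)$ grow faster than any power of $\lambda$; hence the constants $0,\pm1$ are automatically $o(\cdot)$ of the target, and everything reduces to estimating the four integrals $\int_0^{x_0}\hat q\,y_i\tilde y_j\,dt$.

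Writing $s=\sqrt{\lambda}$, I would then substitute the high-energy asymptotic expansions of the (jump-free) fundamental solutions — the $C^m$-refinement of the classical $y_1=\cos st+\cdots$, $y_2=s^{-1}\sin st+\cdots$, available since $q,\tilde q\in C^m[0,x_0]$ — into each product $y_i\tilde y_j$. Using the product-to-sum formulas, $y_i\tilde y_j$ becomes an explicit prefactor $s^{0},\ s^{-1},\ s^{-1},\ s^{-2}$ (for $(i,j)=(1,1),(1,2),(2,1),(2,2)$) times a finite sum of terms of the three types $1$, $e^{2ist}$, $e^{-2ist}$ with amplitudes assembled from $q,\tilde q$ and their primitives and derivatives, plus a remainder that is $o(\cdot)$ of the corresponding target. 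These prefactors are exactly what produce the four distinct orders in \eqref{4} and its three companion identities. On multiplying by $\hat q$, every amplitude picks up the factor $\hat q$, which by the matching hypothesis \eqref{qq} vanishes to order $m$ at $x_0$; the non-oscillatory terms (type $1$) and the subdominant terms (type $e^{2ist}$, of modulus $\le1$ on $[0,x_0]$) integrate to quantities bounded in $s$ with no exponential factor, hence negligible against the target.

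The analytic core is thus a single oscillatory-integral estimate for the dominant exponential: if $g\in C^{p}[0,x_0]$ with $g^{(j)}(x_0)=0$ for $j=0,\dots,p$, then $\int_0^{x_0}g(t)e^{-2ist}\,dt=o\!\left(|s|^{-(p+1)}\exp(2|\text{Im}\,s|\,x_0)\right)$. I would establish this by integrating by parts $p$ times: the boundary contributions at $x_0$ vanish because $g^{(j)}(x_0)=0$, while those at $0$ carry the harmless factor $e^{0}=1$ and are $O(|s|^{-1})$, hence $o(\cdot)$ of the target. For the surviving integral $(2is)^{-p}\int_0^{x_0}g^{(p)}(t)e^{-2ist}\,dt$ I substitute $u=x_0-t$, split $[0,x_0]$ into $[0,\eta]\cup[\eta,x_0]$, and use the continuity of $g^{(p)}$ together with $g^{(p)}(x_0)=0$ to upgrade the Riemann--Lebesgue bound to $o(|s|^{-1}\exp(2|\text{Im}\,s|\,x_0))$ in this exponentially weighted setting. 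Running $g$ through the various amplitudes $\hat q\cdot(\text{smooth factor})$ and reinstating the prefactors $s^{0},s^{-1},s^{-2}$ then yields all four estimates of the lemma.

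The step I expect to be the main obstacle is the precise bookkeeping of the product expansion: one must cleanly separate the slowly varying amplitudes from the $e^{\pm2ist}$ oscillation, confirm that each amplitude is built solely from $q,\tilde q$ so that the factor $\hat q$ genuinely transports the vanishing-to-order-$m$ information from \eqref{qq} out to $x_0$, and verify that the decreasing regularity of the higher-order amplitudes is still compensated by the number of integrations by parts available, so that every term of order $s^{-n}$ contributes $o(s^{-(m+1)}\exp(2|\text{Im}\,s|\,x_0))$ or better. Controlling the remainder uniformly in $\Lambda_\zeta$ is exactly what the $C^m[0,x_0]$ hypothesis furnishes, and this closes the argument; the subsequent passage from the lemma (data at $0$) to Proposition \ref{ooo copy(1)} (data at $r$, smoothness only near $x_0$) will then be a localization in which the contribution of $[r,x_0-\delta]$ is exponentially subdominant.
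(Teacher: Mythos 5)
Your route is correct, but it is genuinely different from the paper's. You stay on $[0,x_{0}]$ and use the identity $\langle y_{i},\widetilde{y}_{j}\rangle (x_{0})=\langle y_{i},\widetilde{y}_{j}\rangle (0)-\int_{0}^{x_{0}}\hat{q}\,y_{i}\widetilde{y}_{j}\,dt$, then expand the product into non-oscillatory, decaying-exponential and growing-exponential pieces (in $\Lambda _{\zeta }$ one has $\mathrm{Im}\sqrt{\lambda }>0$, so $e^{-2i\sqrt{\lambda }t}$ is indeed the dominant one), and extract the gain $\left\vert \sqrt{\lambda }\right\vert ^{-(m+1)}$ on the dominant piece by $m$ integrations by parts — boundary terms at $x_{0}$ killed because $\hat{q}^{(j)}(x_{0})=0$ — followed by a Riemann--Lebesgue refinement near $x_{0}$ using $\left\vert \mathrm{Im}\sqrt{\lambda }\right\vert \geq \left\vert \sqrt{\lambda }\right\vert \sin \frac{\zeta }{2}$. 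The paper instead extends $q$ and $\widetilde{q}$ across $x_{0}$ by the \emph{same} Taylor polynomial $s(x)=\sum_{j\leq m}q_{-}^{(j)}(x_{0})(x-x_{0})^{j}$, notes that the Wronskian of the extended solutions is constant on $[x_{0},x_{0}+\delta ]$ (its derivative is $-(g-\widetilde{g})w_{2}\widetilde{w}_{2}\equiv 0$ there), and differentiates the asymptotic expansion on that interval to conclude that all oscillatory coefficients $r_{k}$ and $u_{k}-z_{k}$ vanish identically there, hence at $x_{0}$; only the single top-order remainder $I_{1}$ then requires the weighted Riemann--Lebesgue estimate. Both proofs rest on the same real work — the $C^{m}$-refined expansions of the fundamental solutions (Lemmas \ref{y2} and \ref{yy2}) and the sector estimate — but the extension trick buys exactly what you flag as your main obstacle: the vanishing of every oscillatory amplitude at $x_{0}$ becomes an automatic algebraic consequence, so one never checks term by term how \eqref{qq} propagates into each amplitude or whether the decreasing regularity of the higher-order amplitudes leaves enough integrations by parts. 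Your bookkeeping does close (the amplitude at relative order $(\sqrt{\lambda })^{-n}$ is roughly $C^{m-n+1}$ and only needs a gain of $(\sqrt{\lambda })^{-(m+1-n)}$, so there is slack for $n\geq 1$, and your core oscillatory lemma delivers exactly the critical $n=0$ case), so the argument is complete in principle, just longer to write out. One small caveat for the follow-up step: in passing to Proposition \ref{ooo copy(1)} the interval $[r,x_{0}-\delta ]$ is not ``exponentially subdominant''; it contributes the full factor $\exp (2\left\vert \mathrm{Im}\sqrt{\lambda }\right\vert (x_{0}-\delta -r))$ through the transfer coefficients, and the cancellation coming from \eqref{qq} lives entirely on $[x_{0}-\delta ,x_{0}]$.
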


We shall prove Lemma \ref{ooo} by analyzing the asymptotic expansion of the
fundamental solutions $\left( \text{see Lemma \ref{y2} and Lemma }\ref{yy2}%
\right) $. Now we first give some preliminary facts and notations.

Recall the solution $y_{2}$ defined by Definition \ref{yyy}, then it follows
from \cite{inverse} that
\begin{equation}
y_{2}(x,\lambda )=\sum\limits_{p=0}^{\infty }S_{p}(x,\lambda ),\text{ }%
y_{2}^{\prime }(x,\lambda )=\sum\limits_{p=0}^{\infty }C_{p}(x,\lambda ),
\label{3.43}
\end{equation}%
where $S_{0}(x,\lambda )=\frac{\sin (\sqrt{\lambda }x)}{\sqrt{\lambda }}$, $%
C_{0}(x,\lambda )=\cos \left( \sqrt{\lambda }x\right) ,\ $and for $p\geq 1,$
\begin{eqnarray}
S_{p}(x,\lambda ) &=&\int_{0}^{x}\frac{\sin (\sqrt{\lambda }\left(
x-t\right) )}{\sqrt{\lambda }}q(t)S_{p-1}(t,\lambda )\mathrm{d}t,\text{ }
\label{4.1} \\
C_{p}(x,\lambda ) &=&\int_{0}^{x}\cos \left( \sqrt{\lambda }\left(
x-t\right) \right) q(t)S_{p-1}(t,\lambda )\mathrm{d}t\text{.}  \label{4.2}
\end{eqnarray}%
In what follows, we adopt the following notations:%
\begin{equation*}
\left( \pm \right) _{j}=\left\{
\begin{array}{c}
-1\text{ if }j=4s,4s+1, \\
1\text{ if }j=4s+2,4s+3,%
\end{array}%
\right.
\end{equation*}%
and%
\begin{equation*}
\nu _{2s}(x,\lambda ):=\frac{\sin (\sqrt{\lambda }x)}{\left( 2\sqrt{\lambda }%
\right) ^{2s}},\text{ }\nu _{2s+1}(x,\lambda ):=\frac{\cos (\sqrt{\lambda }x)%
}{\left( 2\sqrt{\lambda }\right) ^{2s+1}},\text{ }s\in
\mathbb{N}
_{0}.
\end{equation*}%
Then we have the following statement relating to $S_{p}$ defined by $\left( %
\ref{4.1}\right) .$

\begin{lemma}
\label{y2}Assume that $q$ $\in $ $C^{m}\left[ 0,\delta \right] $ for some $%
\delta >0\ $and some $m\in
\mathbb{N}
.$ Denote $\sigma \left( x\right) :=\int_{0}^{x}q(t)\mathrm{d}t.$ Then for $%
x\in \left[ 0,\delta \right] ,$ we have
\begin{eqnarray}
&&S_{1}(x,\lambda )=\sum\limits_{j=1}^{m+1}\frac{\nu _{j}(x,\lambda )}{\sqrt{%
\lambda }}f_{1,j}\left( x\right) +\frac{\left( \pm \right) _{m+2}}{\sqrt{%
\lambda }}\int_{0}^{x}\nu _{m+1}(x-2t,\lambda )q^{\left( m\right) }(t)dt,
\label{4.4} \\
&&S_{2}(x,\lambda )=\sum\limits_{j=1}^{m+2}\frac{\nu _{j}(x,\lambda )}{\sqrt{%
\lambda }}f_{2,j}\left( x\right) +B_{2}\left( x,\lambda \right) ,
\label{4.5} \\
&&S_{p}(x,\lambda )=\sum\limits_{j=1}^{m+2}\frac{\nu _{j}(x,\lambda )}{\sqrt{%
\lambda }}f_{p,j}\left( x\right) +B_{p}\left( x,\lambda \right) \text{ for }%
p=3,\ldots ,m+2  \label{4.6}
\end{eqnarray}%
where%
\begin{eqnarray*}
B_{2}\left( x,\lambda \right) &=&-\frac{\left( \pm \right) _{m+3}}{\sqrt{%
\lambda }}\int_{0}^{x}\nu _{m+2}(x-2t,\lambda )\left(
\sum\limits_{j=1}^{m+1}\left( \pm \right) _{j}\left( q(t)f_{1,j}\left(
t\right) \right) ^{\left( m+1-j\right) }\right) dt \\
&&+\frac{\left( \pm \right) _{m+2}}{\sqrt{\lambda }}\int_{0}^{x}\frac{\sin
\sqrt{\lambda }\left( x-t\right) }{\sqrt{\lambda }}q(t)\int_{0}^{t}\nu
_{m+1}(t-2s,\lambda )q^{\left( m\right) }(s)dsdt, \\
B_{p}\left( x,\lambda \right) &=&-\frac{\left( \pm \right) _{m+3}}{\sqrt{%
\lambda }}\int_{0}^{x}\nu _{m+2}(x-2t,\lambda )\sum\limits_{j=1}^{m+1}\left(
\pm \right) _{j}\left( q(t)f_{p-1,j}(t)\right) ^{\left( m+1-j\right) }\left(
t\right) dt \\
&&+\int_{0}^{x}\frac{\sin (\sqrt{\lambda }\left( x-t\right) )}{\sqrt{\lambda
}}q(t)\left[ \frac{\nu _{m+2}(t,\lambda )}{\sqrt{\lambda }}f_{p-1,m+2}\left(
t\right) +B_{p-1}\left( t,\lambda \right) \right] \mathrm{d}t
\end{eqnarray*}%
for $p=3,\ldots m+2,$ and the functions $f_{p,j}\left( x\right) $ are
defined by the recurrence relations%
\begin{eqnarray*}
\text{ }f_{1,j}\left( x\right) &=&\left( \pm \right) _{j}\left( \sigma
^{\left( j-1\right) }\left( x\right) -(-1)^{j-1}\sigma ^{\left( j-1\right)
}\left( 0\right) \right) ,\text{ } \\
f_{p,p}\left( x\right) &=&(-1)^{p}\int_{0}^{x}q(t)f_{p-1,p-1}\left( t\right)
dt\ \text{for }p=2,\ldots ,m+2, \\
f_{p,j}\left( x\right) &=&-\sum\limits_{s=1}^{j-2}\left( \pm \right)
_{s}\left( \pm \right) _{j}\left( \left( qf_{p-1,s}\right) ^{\left(
j-s-2\right) }\left( x\right) -(-1)^{j-1}\left( qf_{p-1,s}\right) ^{\left(
j-s-2\right) }\left( 0\right) \right) \\
&&+(-1)^{j}\int_{0}^{x}q(t)f_{p-1,j-1}\left( t\right) dt\ \text{for }j>p\
\text{and }p=2,\ldots ,m+2, \\
f_{p,j}\left( x\right) &=&0\text{ for }j<p.
\end{eqnarray*}%
Moreover$,$ $f_{p,j}\in C^{m+p-j+1}\left[ 0,\delta \right] .$
\end{lemma}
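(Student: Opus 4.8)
The plan is to argue by induction on $p$, reducing the whole computation to two elementary tools. The first is the pair of product-to-sum identities
\begin{align*}
\sin\!\left(\sqrt{\lambda}(x-t)\right)\sin\!\left(\sqrt{\lambda}t\right)&=\tfrac{1}{2}\left(\cos\!\left(\sqrt{\lambda}(x-2t)\right)-\cos\!\left(\sqrt{\lambda}x\right)\right),\\
\sin\!\left(\sqrt{\lambda}(x-t)\right)\cos\!\left(\sqrt{\lambda}t\right)&=\tfrac{1}{2}\left(\sin\!\left(\sqrt{\lambda}x\right)+\sin\!\left(\sqrt{\lambda}(x-2t)\right)\right).
\end{align*}
Rewritten through the functions $\nu_{j}$, these say that multiplying the kernel $\sin(\sqrt{\lambda}(x-t))/\sqrt{\lambda}$ by a factor $\nu_{j}(t,\lambda)$ produces one \emph{frozen} contribution, a scalar multiple of $\nu_{j+1}(x,\lambda)$ that no longer depends on $t$, and one \emph{travelling} contribution, a multiple of $\nu_{j+1}(x-2t,\lambda)$. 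The second tool is the differentiation rule $\frac{d}{dt}\nu_{j}(x-2t,\lambda)=\varepsilon_{j}\,\nu_{j-1}(x-2t,\lambda)$, where $\varepsilon_{j}=-1$ for $j$ even and $\varepsilon_{j}=+1$ for $j$ odd; this lets me integrate any travelling term by parts, trading one power of $\sqrt{\lambda}$ in the denominator for one additional derivative on $q$ and one higher $\nu$-index, while generating boundary data at $t=0$ and $t=x$.

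For the base case $p=1$ I would first apply the first identity to
\[
S_{1}(x,\lambda)=\int_{0}^{x}\frac{\sin(\sqrt{\lambda}(x-t))}{\sqrt{\lambda}}\,q(t)\,\frac{\sin(\sqrt{\lambda}t)}{\sqrt{\lambda}}\,dt.
\]
The frozen part is exactly the $j=1$ summand $\nu_{1}(x,\lambda)f_{1,1}(x)/\sqrt{\lambda}$ with $f_{1,1}(x)=-\sigma(x)$, and the travelling part equals $\frac{1}{\sqrt{\lambda}}\int_{0}^{x}\nu_{1}(x-2t,\lambda)q(t)\,dt$. Integrating this by parts $m$ times via the differentiation rule, the boundary evaluations at the two endpoints (combined using the parity of $\nu_{j}$ in its spatial argument) assemble the terms $\nu_{j}(x,\lambda)f_{1,j}(x)/\sqrt{\lambda}$ with $f_{1,j}(x)=(\pm)_{j}\left(\sigma^{(j-1)}(x)-(-1)^{j-1}\sigma^{(j-1)}(0)\right)$ for $j=2,\ldots,m+1$, while the surviving integral, carrying $q^{(m)}$ against $\nu_{m+1}(x-2t,\lambda)$, is precisely the remainder displayed in \eqref{4.4}.

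For the inductive step I would substitute the expansion of $S_{p-1}$ into $S_{p}(x,\lambda)=\int_{0}^{x}\frac{\sin(\sqrt{\lambda}(x-t))}{\sqrt{\lambda}}q(t)S_{p-1}(t,\lambda)\,dt$ and treat each summand $\nu_{j}(t,\lambda)f_{p-1,j}(t)/\sqrt{\lambda}$ exactly as in the base case. The frozen part contributes $\nu_{j+1}(x,\lambda)$ times $\int_{0}^{x}q f_{p-1,j}$; its diagonal instance $j=p-1$ yields $f_{p,p}(x)=(-1)^{p}\int_{0}^{x}q(t)f_{p-1,p-1}(t)\,dt$, and combining the remaining frozen integrals with the boundary data obtained by integrating the travelling parts by parts gives the off-diagonal recurrence for $f_{p,j}$ with $j>p$. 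The top retained coefficient $f_{p-1,m+2}$ together with the inherited remainder $B_{p-1}$ are not expanded further but carried along, and this is what produces the two pieces of $B_{p}$ in the stated formula; the convention $f_{p-1,s}=0$ for $s<p-1$ ensures that all sums begin at the correct index. The smoothness claim $f_{p,j}\in C^{m+p-j+1}[0,\delta]$ then follows by the same induction: for $p=1$ it is immediate from $\sigma\in C^{m+1}$, and in the recurrence the integration against $q$ raises smoothness by one while the differentiations $(qf_{p-1,s})^{(j-s-2)}$ lower it, the worst case $s=p-1$ landing in exactly $C^{m+p-j+1}$.

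The main obstacle is not any individual estimate but the bookkeeping. The delicate point is to reconcile the two sign conventions, $(\pm)_{j}$ governing the boundary terms and $(-1)^{j}$ governing the frozen terms, so that the two sources combine into precisely the stated recurrences for $f_{p,j}$ and into the exact remainder formulas for $B_{2}$ and $B_{p}$; keeping the truncation index fixed at $m+2$ uniformly in $p$, and verifying that everything discarded genuinely fits into $B_{p}$, is where the care is required.
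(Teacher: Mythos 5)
Your proposal is correct and follows essentially the same route as the paper: your two ``elementary tools'' are precisely the paper's identities $\frac{\sin(\sqrt{\lambda}(x-t))}{\sqrt{\lambda}}\nu _{j}(t,\lambda )=(-1)^{j+1}\nu _{j+1}(x,\lambda )+\nu _{j+1}(x-2t,\lambda )$ and the integration-by-parts formula for $\int_{0}^{x}\nu _{j}(x-2t,\lambda )f(t)\,dt$, and the induction on $p$ (expand $S_{p-1}$, split each $\nu _{j}$-summand into frozen and travelling parts, integrate the travelling parts by parts until $q^{(m)}$ appears, and sweep the unexpanded pieces into $B_{p}$) is exactly the paper's argument, following Savchuk--Shkalikov. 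Your smoothness bookkeeping, with the worst case at $s=p-1$ giving $f_{p,j}\in C^{m+p-j+1}$, is also the correct accounting.
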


\begin{proof}
In order to prove this lemma, we will follow the technique in \cite[Lemma 4.2%
]{savchuk}. We first note that%
\begin{equation}
\frac{\sin (\sqrt{\lambda }\left( x-t\right) )}{\sqrt{\lambda }}\nu
_{j}(t,\lambda )=\left( -1\right) ^{j+1}\nu _{j+1}(x,\lambda )+\nu
_{j+1}(x-2t,\lambda )  \label{4.9}
\end{equation}%
and for $f\in C^{1}\left[ 0,x\right] ,$%
\begin{eqnarray}
&&\int_{0}^{x}\nu _{j}(x-2t,\lambda )f\left( t\right) dt  \label{4.10} \\
&=&\nu _{j+1}(x,\lambda )\left( f\left( x\right) -\left( -1\right)
^{j}f\left( 0\right) \right) +\left( -1\right) ^{j+1}\int_{0}^{x}\nu
_{j+1}(x-2t,\lambda )f^{\prime }\left( t\right) dt.  \notag
\end{eqnarray}%
In view of $\left( \ref{4.9}\right) $ and $\left( \ref{4.10}\right) ,$ one
can easily deduce the expression $\left( \ref{4.4}\right) .$ Now we turn to
deduce the expressions for the other functions $S_{j}.$ Suppose that $%
f_{j}\in C^{m+1-j}\left[ 0,x\right] ,$ then from $\left( \ref{4.9}\right) ,$
we know that for $j=1,\ldots ,m+1,$
\begin{eqnarray}
&&\int_{0}^{x}\frac{\sin (\sqrt{\lambda }\left( x-t\right) )}{\sqrt{\lambda }%
}\nu _{j}(t,\lambda )f_{j}\left( t\right) dt  \label{le} \\
&=&\left( -1\right) ^{j+1}\nu _{j+1}(x,\lambda )\int_{0}^{x}f_{j}\left(
t\right) dt+\int_{0}^{x}\nu _{j+1}(x-2t,\lambda )f_{j}\left( t\right) dt,
\notag
\end{eqnarray}%
Moreover, integrating by parts the second summand on the right-hand side of
the above equality $m+1-j$ times and using $\left( \ref{4.9}\right) ,$ it
follows that for $j=1,\ldots ,m,$
\begin{eqnarray}
&&\int_{0}^{x}\frac{\sin (\sqrt{\lambda }\left( x-t\right) )}{\sqrt{\lambda }%
}\nu _{j}(t,\lambda )f_{j}\left( t\right) dt  \label{le1} \\
&=&\left( -1\right) ^{j+1}\nu _{j+1}(x,\lambda )\int_{0}^{x}f_{j}\left(
t\right) dt-\sum_{s=j+2}^{m+2}\nu _{s}(x,\lambda )\left( \pm \right)
_{j}\left( \pm \right) _{s}(f_{j}^{\left( s-j-2\right) }\left( x\right)
\notag \\
&&-(-1)^{s-1}f_{j}^{\left( s-j-2\right) }\left( 0\right) )-\left( \pm
\right) _{j}\left( \pm \right) _{m+3}\int_{0}^{x}\nu _{m+2}(x-2t,\lambda
)f_{j}^{\left( m+1-j\right) }\left( t\right) dt.  \notag
\end{eqnarray}%
Therefore, by virtue of $\left( \ref{le}\right) $ $($for $j=m+1)$ and $%
\left( \ref{le1}\right) ,$ for $x\in \left( 0,\delta \right] $ we have that
\begin{eqnarray}
&&\sum\limits_{j=1}^{m+1}\int_{0}^{x}\frac{\sin (\sqrt{\lambda }\left(
x-t\right) )}{\sqrt{\lambda }}\nu _{j}(t,\lambda )f_{j}\left( t\right) dt
\label{4.11} \\
&=&\left( -1\right) ^{2}\nu _{2}(x,\lambda )\int_{0}^{x}f_{1}\left( t\right)
dt  \notag \\
&&+\sum_{j=3}^{m+2}\nu _{j}(x,\lambda )(-\sum\limits_{s=1}^{j-2}\left( \pm
\right) _{s}\left( \pm \right) _{j}(f_{s}^{\left( j-s-2\right) }\left(
x\right) -(-1)^{j-1}f_{s}^{\left( j-s-2\right) }\left( 0\right) )  \notag \\
&&+\left( -1\right) ^{j}\int_{0}^{x}f_{j-1}\left( t\right) dt)-\left( \pm
\right) _{m+3}\int_{0}^{x}\nu _{m+2}(x-2t,\lambda
)\sum\limits_{j=1}^{m+1}\left( \pm \right) _{j}f_{j}^{\left( m+1-j\right)
}\left( t\right) dt.  \notag
\end{eqnarray}%
Now in view of $\left( \ref{4.1}\right) $ and $\left( \ref{4.4}\right) ,$ we
obtain that for $x\in \left( 0,\delta \right] ,$%
\begin{eqnarray*}
S_{2}(x,\lambda ) &=&\frac{1}{\sqrt{\lambda }}\sum\limits_{j=1}^{m+1}%
\int_{0}^{x}\frac{\sin (\sqrt{\lambda }\left( x-t\right) )}{\sqrt{\lambda }}%
\nu _{j}(t,\lambda )q(t)f_{1,j}\left( t\right) \mathrm{d}t \\
&&+\frac{\left( \pm \right) _{m+2}}{\sqrt{\lambda }}\int_{0}^{x}\frac{\sin
\sqrt{\lambda }\left( x-t\right) }{\sqrt{\lambda }}q(t)\int_{0}^{t}\nu
_{m+1}(t-2s,\lambda )q^{\left( m\right) }(s)dsdt.
\end{eqnarray*}%
Making use of $\left( \ref{4.11}\right) $ with $f_{j}\left( t\right) $
replaced by $q(t)f_{1,j}\left( t\right) $ and in virtue of the fact $%
qf_{1,j}\in C^{m+1-j}\left[ 0,\delta \right] $ for $j$ $=1,\ldots ,m+1,$ we
obtain $\left( \ref{4.5}\right) .$ Next, from $\left( \ref{4.1}\right) $ and
$\left( \ref{4.5}\right) ,$ it follows that for $x\in \left( 0,\delta \right]
,$%
\begin{eqnarray*}
S_{3}(x,\lambda ) &=&\frac{1}{\sqrt{\lambda }}\sum\limits_{j=1}^{m+1}%
\int_{0}^{x}\frac{\sin (\sqrt{\lambda }\left( x-t\right) )}{\sqrt{\lambda }}%
\nu _{j}(t,\lambda )q(t)f_{2,j}\left( t\right) \mathrm{d}t \\
&&+\int_{0}^{x}\frac{\sin (\sqrt{\lambda }\left( x-t\right) )}{\sqrt{\lambda
}}q(t)\left[ \frac{\nu _{m+2}(t,\lambda )}{\sqrt{\lambda }}f_{2,m+2}\left(
t\right) +B_{2}\left( t,\lambda \right) \right] \mathrm{d}t.
\end{eqnarray*}%
Then the expression $\left( \ref{4.6}\right) $ for $S_{3}$ can be proved by
using $\left( \ref{4.11}\right) $ and letting $f_{j}\left( t\right)
:=q(t)f_{2,j}\left( t\right) .$ The proof of the relation $\left( \ref{4.6}%
\right) $ for $p=4,\ldots ,m+2\ $can be carried out in the same way.
\end{proof}

As a consequence of Lemma \ref{y2}, we have the following assertion relating
to $C_{p}$ defined by $\left( \ref{4.2}\right) .$

\begin{lemma}
\label{yy2}Assume that $q$ $\in $ $C^{m}\left[ 0,\delta \right] $ for some $%
\delta >0\ $and some $m\in
\mathbb{N}
.$ Denote $\sigma \left( x\right) :=\int_{0}^{x}q(t)\mathrm{d}t.$ Then for $%
x\in \left[ 0,\delta \right] ,$ we have%
\begin{eqnarray*}
C_{1}(x,\lambda ) &=&\frac{-f_{1,1}\left( x\right) }{2}\frac{\nu
_{0}(x,\lambda )}{\sqrt{\lambda }}+\sum\limits_{j=1}^{m}\frac{\nu
_{j}(x,\lambda )}{\sqrt{\lambda }}\left[ f_{1,j}^{\prime }\left( x\right) +%
\frac{(-1)^{j+1}f_{1,j+1}\left( x\right) }{2}\right] \\
&&+\frac{\left( \pm \right) _{m+2}}{\sqrt{\lambda }}\int_{0}^{x}\frac{d\nu
_{m+1}(x-2t,\lambda )}{dx}q^{\left( m\right) }(t)dt, \\
C_{2}(x,\lambda ) &=&\sum\limits_{j=1}^{m+1}\frac{\nu _{j}(x,\lambda )}{%
\sqrt{\lambda }}\left[ f_{2,j}^{\prime }\left( x\right) +\frac{%
(-1)^{j+1}f_{2,j+1}\left( x\right) }{2}\right] +D_{2}\left( x,\lambda \right)
\\
C_{p}(x,\lambda ) &=&\sum\limits_{j=1}^{m+1}\frac{\nu _{j}(x,\lambda )}{%
\sqrt{\lambda }}\left[ f_{p,j}^{\prime }\left( x\right) +\frac{%
(-1)^{j+1}f_{p,j+1}\left( x\right) }{2}\right] +D_{p}\left( x,\lambda
\right) \ \text{for }p=3,\ldots ,m+2
\end{eqnarray*}%
where $f_{p,j}\left( x\right) $ are the functions defined in Lemma $\ref{y2}$%
, and%
\begin{eqnarray*}
D_{2}\left( x,\lambda \right) &=&-\frac{\left( \pm \right) _{m+3}}{\sqrt{%
\lambda }}\int_{0}^{x}\frac{d\nu _{m+2}(x-2t,\lambda )}{dx}\left(
\sum\limits_{j=1}^{m+1}\left( \pm \right) _{j}\left( q(t)f_{1,j}\left(
t\right) \right) ^{\left( m+1-j\right) }\right) dt \\
&&+\frac{\left( \pm \right) _{m+2}}{\sqrt{\lambda }}\int_{0}^{x}\cos \sqrt{%
\lambda }\left( x-t\right) q(t)\int_{0}^{t}\nu _{m+1}(t-2s,\lambda
)q^{\left( m\right) }(s)dsdt, \\
D_{p}\left( x,\lambda \right) &=&-\frac{\left( \pm \right) _{m+3}}{\sqrt{%
\lambda }}\int_{0}^{x}\frac{d\nu _{m+2}(x-2t,\lambda )}{dx}%
\sum\limits_{j=1}^{m+1}\left( \pm \right) _{j}\left( qf_{p-1,j}\right)
^{\left( m+1-j\right) }\left( t\right) dt \\
&&+\int_{0}^{x}\cos \sqrt{\lambda }\left( x-t\right) q(t)\left[ \frac{\nu
_{m+2}(t,\lambda )}{\sqrt{\lambda }}f_{p-1,m+2}\left( t\right)
+B_{p-1}\left( t,\lambda \right) \right] \mathrm{d}t
\end{eqnarray*}%
$\ $for $p=3,\ldots ,m+2.$
\end{lemma}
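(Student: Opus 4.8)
The plan is to exploit the elementary identity that $C_p$ is the $x$-derivative of $S_p$. Differentiating the defining recurrence $\left( \ref{4.1}\right) $ under the integral sign, the Leibniz boundary term at $t=x$ vanishes because $\sin\left( \sqrt{\lambda}\cdot 0\right) =0$, while $\partial_x\left[ \sin\left( \sqrt{\lambda}\left( x-t\right) \right) /\sqrt{\lambda}\right] =\cos\left( \sqrt{\lambda}\left( x-t\right) \right) $; comparing with $\left( \ref{4.2}\right) $ this gives
\begin{equation*}
C_p\left( x,\lambda\right) =\frac{\partial}{\partial x}S_p\left( x,\lambda\right) .
\end{equation*}
Hence the whole lemma reduces to differentiating the expansions $\left( \ref{4.4}\right) $--$\left( \ref{4.6}\right) $ already furnished by Lemma \ref{y2}.

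First I would record the differentiation rule for the basis functions,
\begin{equation*}
\frac{\partial}{\partial x}\nu_j\left( x,\lambda\right) =\frac{\left( -1\right) ^{j}}{2}\nu_{j-1}\left( x,\lambda\right) ,
\end{equation*}
which is immediate from the definitions of $\nu_{2s}$ and $\nu_{2s+1}$. Applying the product rule to a summand $\frac{\nu_j}{\sqrt{\lambda}}f_{p,j}$ yields $\frac{\nu_j}{\sqrt{\lambda}}f_{p,j}^{\prime}$ together with $\frac{\left( -1\right) ^{j}}{2}\frac{\nu_{j-1}}{\sqrt{\lambda}}f_{p,j}$; re-indexing the latter family by $j\mapsto j+1$ and collecting, the coefficient of $\frac{\nu_j}{\sqrt{\lambda}}$ becomes exactly $f_{p,j}^{\prime}\left( x\right) +\frac{\left( -1\right) ^{j+1}}{2}f_{p,j+1}\left( x\right) $, as claimed. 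The bottom term $\nu_0$ is produced only from $j=1$ and equals $\frac{-f_{p,1}}{2}\frac{\nu_0}{\sqrt{\lambda}}$; it survives only for $p=1$, since $f_{p,1}=0$ whenever $p\geq 2$.

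For the remainder I would differentiate the integrals defining $B_p$ by the Leibniz rule. The $\sin$-type integral $\int_0^{x}\frac{\sin\sqrt{\lambda}\left( x-t\right) }{\sqrt{\lambda}}q\left( t\right) \left( \cdots\right) \mathrm{d}t$ has vanishing boundary term and passes to the $\cos$-type integral appearing in $D_p$, whereas $\int_0^{x}\nu_{m+2}\left( x-2t,\lambda\right) \left( \cdots\right) \mathrm{d}t$ produces $\frac{d\nu_{m+2}\left( x-2t,\lambda\right) }{dx}$ under the integral sign, again matching $D_p$, plus a boundary term at $t=x$.

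The one delicate point---and the main obstacle---is to show that these boundary terms exactly annihilate the \emph{overflow} top-index contribution generated by differentiating the main sum: namely $\frac{\nu_{m+1}}{\sqrt{\lambda}}f_{1,m+1}^{\prime}$ for $p=1$ (and $\frac{\nu_{m+2}}{\sqrt{\lambda}}f_{p,m+2}^{\prime}$ for $p\geq 2$), which does not appear in the stated expansion since the displayed sum stops at $j=m$ (resp.\ $j=m+1$). I would verify this directly in the representative case $p=1$: using $f_{1,m+1}^{\prime}=\left( \pm\right) _{m+1}q^{\left( m\right) }$, the parity identity $\nu_{m+1}\left( -x,\lambda\right) =\left( -1\right) ^{m}\nu_{m+1}\left( x,\lambda\right) $, and the sign relation $\left( \pm\right) _{m+2}=\left( -1\right) ^{m+1}\left( \pm\right) _{m+1}$, the overflow term and the boundary term are seen to be negatives of one another and cancel identically; the cases $p\geq 2$ follow by the same sign bookkeeping applied to the $\nu_{m+2}$ level. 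Throughout, the regularity $f_{p,j}\in C^{m+p-j+1}$ recorded in Lemma \ref{y2} legitimizes every differentiation and integration by parts invoked above.
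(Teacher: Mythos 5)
Your proof is correct and is exactly the argument the paper intends: the paper states Lemma \ref{yy2} without proof as ``a consequence of Lemma \ref{y2}'', and the intended route is precisely your observation that $C_p(x,\lambda)=\partial_x S_p(x,\lambda)$ followed by termwise differentiation of $\left(\ref{4.4}\right)$--$\left(\ref{4.6}\right)$. Your identification and verification of the delicate point --- the cancellation of the top-index overflow $\nu_{m+1}f_{1,m+1}'/\sqrt{\lambda}$ (resp.\ $\nu_{m+2}f_{p,m+2}'/\sqrt{\lambda}$) against the Leibniz boundary terms via the parity of $\nu_j$ and the relation $(\pm)_{j+1}=(-1)^{j}(\pm)_{j}$ --- checks out and is the only nontrivial step.
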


\begin{lemma}
Assume that $q$ $\in $ $C^{m}\left[ 0,\delta \right] $ for some $\delta >0\ $%
and some $m\in
\mathbb{N}
_{0}.$ Then for $x\in \left[ 0,\delta \right] ,$ $y_{2}(x,\lambda )$ and $%
y_{2}^{\prime }(x,\lambda )$ can be rewritten as the following form:%
\begin{eqnarray}
y_{2}(x,\lambda ) &=&\frac{\sin \left( \sqrt{\lambda }x\right) }{\sqrt{%
\lambda }}+\sum\limits_{j=1}^{m+2}a_{j}\left( x\right) \frac{\nu
_{j}(x,\lambda )}{\sqrt{\lambda }}  \label{y} \\
&&+\frac{\left( \pm \right) _{m+2}}{\sqrt{\lambda }}\int_{0}^{x}\nu
_{m+1}(x-2t,\lambda )q^{\left( m\right) }(t)dt  \notag \\
&&+\sum\limits_{p=2}^{m+2}B_{p}\left( x,\lambda \right)
+\sum\limits_{p=m+3}^{\infty }S_{p}\left( x,\lambda \right) ,  \notag
\end{eqnarray}%
and%
\begin{eqnarray}
y_{2}^{\prime }(x,\lambda ) &=&\cos \left( \sqrt{\lambda }x\right)
+\sum\limits_{j=0}^{m+1}b_{j}\left( x\right) \frac{\nu _{j}(x,\lambda )}{%
\sqrt{\lambda }}  \label{y2'} \\
&&+\frac{\left( \pm \right) _{m+2}}{\sqrt{\lambda }}\int_{0}^{x}\frac{d\nu
_{m+1}(x-2t,\lambda )}{dx}q^{\left( m\right) }(t)dt  \notag \\
&&+\sum\limits_{p=2}^{m+2}D_{p}\left( x,\lambda \right)
+\sum\limits_{p=m+3}^{\infty }C_{p}\left( x,\lambda \right) ,  \notag
\end{eqnarray}%
where%
\begin{equation*}
a_{j}\left( x\right) =\sum\limits_{p=1}^{m+2}f_{p,j}\left( x\right) \ \text{%
for }j=1,\ldots m+1,\text{ }a_{m+2}\left( x\right)
=\sum\limits_{p=2}^{m+2}f_{p,m+2}\left( x\right) ,
\end{equation*}%
and%
\begin{eqnarray*}
b_{0}\left( x\right) &=&\frac{-f_{1,1}\left( x\right) }{2},\text{ }%
b_{m+1}\left( x\right) =\sum\limits_{p=2}^{m+2}\left( f_{p,m+1}^{\prime
}\left( x\right) +\frac{(-1)^{m+2}f_{p,m+2}\left( x\right) }{2}\right) , \\
b_{j}\left( x\right) &=&\sum\limits_{p=1}^{m+2}\left( f_{p,j}^{\prime
}\left( x\right) +\frac{(-1)^{j+1}f_{p,j+1}\left( x\right) }{2}\right) ,%
\text{ }j=1,2,\ldots ,m.
\end{eqnarray*}
\end{lemma}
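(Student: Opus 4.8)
The plan is to derive both identities by inserting the termwise expansions of Lemma~\ref{y2} and Lemma~\ref{yy2} into the convergent series $y_{2}(x,\lambda)=\sum_{p=0}^{\infty}S_{p}(x,\lambda)$ and $y_{2}^{\prime}(x,\lambda)=\sum_{p=0}^{\infty}C_{p}(x,\lambda)$ of $\left(\ref{3.43}\right)$, and then regrouping the finitely many leading summands according to the order of $\nu_{j}$. Because only the block $p=0,1,\ldots,m+2$ is to be expanded while the tail $\sum_{p\geq m+3}$ is kept verbatim as a remainder, no rearrangement or convergence issue arises: I am simply splitting a convergent series into a finite part plus a remainder and reorganizing the finite part.

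First I would handle $y_{2}$. I would peel off $S_{0}=\sin(\sqrt{\lambda}x)/\sqrt{\lambda}$, substitute $\left(\ref{4.4}\right)$ for $S_{1}$, and substitute $\left(\ref{4.5}\right)$--$\left(\ref{4.6}\right)$ for $S_{2},\ldots,S_{m+2}$, then read off the coefficient of each $\nu_{j}(x,\lambda)/\sqrt{\lambda}$. The decisive bookkeeping point is that the expansion of $S_{1}$ terminates at $\nu_{m+1}$---its top-order piece being absorbed into the explicit remainder $\frac{(\pm)_{m+2}}{\sqrt{\lambda}}\int_{0}^{x}\nu_{m+1}(x-2t,\lambda)q^{(m)}(t)\,dt$---whereas each $S_{p}$ with $p\geq 2$ reaches $\nu_{m+2}$. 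Consequently the coefficient of $\nu_{j}$ for $j=1,\ldots,m+1$ is $\sum_{p=1}^{m+2}f_{p,j}$, while that of $\nu_{m+2}$ is only $\sum_{p=2}^{m+2}f_{p,m+2}$, which are exactly the stated $a_{j}$ and $a_{m+2}$; the upper limit $m+2$ is harmless for small $j$ because $f_{p,j}=0$ whenever $j<p$. Collecting the remaining pieces---the single $q^{(m)}$-integral from $S_{1}$, the remainders $B_{p}$ for $p=2,\ldots,m+2$, and the tail $\sum_{p\geq m+3}S_{p}$---reproduces $\left(\ref{y}\right)$.

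The argument for $y_{2}^{\prime}$ is entirely parallel, starting from $C_{0}=\cos(\sqrt{\lambda}x)$ and Lemma~\ref{yy2}. The only asymmetry to monitor is at the two ends of the $j$-range: the coefficient of $\nu_{0}$ comes solely from $C_{1}$, giving $b_{0}=-f_{1,1}/2$; the coefficients of $\nu_{1},\ldots,\nu_{m}$ receive contributions from all of $C_{1},\ldots,C_{m+2}$, giving $b_{j}=\sum_{p=1}^{m+2}(f_{p,j}^{\prime}+\tfrac{(-1)^{j+1}}{2}f_{p,j+1})$; and the coefficient of $\nu_{m+1}$ is fed only by $C_{2},\ldots,C_{m+2}$ because the expansion of $C_{1}$ stops at $\nu_{m}$, giving $b_{m+1}=\sum_{p=2}^{m+2}(f_{p,m+1}^{\prime}+\tfrac{(-1)^{m+2}}{2}f_{p,m+2})$. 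Gathering the explicit $q^{(m)}$-integral coming from $C_{1}$, the remainders $D_{p}$ for $p=2,\ldots,m+2$, and the tail $\sum_{p\geq m+3}C_{p}$ then yields $\left(\ref{y2'}\right)$.

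I do not anticipate a genuine analytic obstacle here; the work is careful index accounting rather than estimation. The one spot that deserves verification is the asymmetric truncation at top order---namely why the explicit $q^{(m)}$-remainder enters precisely at level $\nu_{m+1}$ and why $a_{m+2}$ and $b_{m+1}$ omit the $p=1$ term---which I would confirm directly from the top-order terms of $\left(\ref{4.4}\right)$ and of the $C_{1}$ formula in Lemma~\ref{yy2}. Finally, since the boundary value $m=0$ is admitted in this statement but excluded from Lemmas~\ref{y2} and~\ref{yy2} (which require $m\in\mathbb{N}$), I would record a short separate check for it using only $\left(\ref{4.1}\right)$--$\left(\ref{4.2}\right)$ together with $\left(\ref{4.9}\right)$, where the recurrence machinery degenerates and essentially only $S_{1}$ and $C_{1}$ survive.
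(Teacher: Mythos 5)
Your proposal is correct and follows essentially the same route as the paper, which likewise obtains \eqref{y} and \eqref{y2'} by substituting the expansions of Lemma \ref{y2} and Lemma \ref{yy2} into the series \eqref{3.43} and collecting the coefficients of the $\nu_{j}$, with the $m=0$ case handled separately by a simpler direct computation. Your explicit bookkeeping of why $a_{m+2}$ and $b_{m+1}$ omit the $p=1$ contribution (the expansions of $S_{1}$ and $C_{1}$ terminating one order earlier) is exactly the point the paper leaves implicit.
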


\begin{proof}
For $m\in
\mathbb{N}
,$ the expressions $\left( \ref{y}\right) $ and $\left( \ref{y2'}\right) $
can be directly obtained from $\left( \ref{3.43}\right) ,$ Lemma \ref{y2}
and Lemma \ref{yy2}. For $m=0,$ the proof can be carried out in the same way
even simpler.
\end{proof}

\begin{remark}
\label{yy2 copy(1)}For $g\in L^{1}\left[ 0,x\right] ,$ one notes that the
following identities
\begin{eqnarray}
\int_{0}^{x}\sin \left( \sqrt{\lambda }t\right) g(t)dt &=&o\left( \exp
\left( \left\vert \text{Im}\sqrt{\lambda }\right\vert x\right) \right) ,
\label{by1} \\
\int_{0}^{x}\cos \left( \sqrt{\lambda }t\right) g(t)dt &=&o\left( \exp
\left( \left\vert \text{Im}\sqrt{\lambda }\right\vert x\right) \right)
\label{by2}
\end{eqnarray}%
hold \cite{inverse}. By virtue of $\left( \ref{by1}\right) $ and $\left( \ref%
{by2}\right) $, it is easy to deduce that%
\begin{equation*}
B_{p}\left( x,\lambda \right) =o\left( \frac{\exp \left( \left\vert \text{Im}%
\sqrt{\lambda }\right\vert x\right) }{\left\vert \sqrt{\lambda }\right\vert
^{m+3}}\right) ,\text{ }B_{p}^{\prime }\left( x,\lambda \right) =o\left(
\frac{\exp \left( \left\vert \text{Im}\sqrt{\lambda }\right\vert x\right) }{%
\left\vert \sqrt{\lambda }\right\vert ^{m+2}}\right)
\end{equation*}%
and%
\begin{equation*}
D_{p}\left( x,\lambda \right) =o\left( \frac{\exp \left( \left\vert \text{Im}%
\sqrt{\lambda }\right\vert x\right) }{\left\vert \sqrt{\lambda }\right\vert
^{m+2}}\right) ,\text{ }D_{p}^{\prime }\left( x,\lambda \right) =o\left(
\frac{\exp \left( \left\vert \text{Im}\sqrt{\lambda }\right\vert x\right) }{%
\left\vert \sqrt{\lambda }\right\vert ^{m+1}}\right)
\end{equation*}%
hold for $p=2,3,\ldots ,m+2.$
\end{remark}

\begin{remark}
\label{yy2 copy(3)}Note that%
\begin{eqnarray*}
S_{p}(x,\lambda ) &=&\int_{0\leq t_{1}\leq \cdots \leq
t_{p+1}:=x}\prod\limits_{i=1}^{p}s_{\lambda }(t_{i+1}-t_{i}\,)s_{\lambda
}(t_{1})q(t_{i})\mathrm{d}t_{1}\cdots \,\mathrm{d}t_{p}, \\
C_{p}(x,\lambda ) &=&\int_{0\leq t_{1}\leq \cdots \leq t_{p+1}:=x}c_{\lambda
}(t_{p+1}-t_{p})\prod\limits_{i=1}^{p-1}s_{\lambda
}(t_{i+1}-t_{i}\,)s_{\lambda }(t_{1})q(t_{i})\mathrm{d}t_{1}\cdots \,\mathrm{%
d}t_{p},
\end{eqnarray*}%
where $s_{\lambda }(x):=\frac{\sin (\sqrt{\lambda }x)}{\sqrt{\lambda }}%
,c_{\lambda }(x):=\cos (\sqrt{\lambda }x),$ and%
\begin{equation*}
\left\vert \frac{\sin (\sqrt{\lambda }x)}{\sqrt{\lambda }}\right\vert \leq
\frac{\exp \left( \left\vert \text{Im}\sqrt{\lambda }\right\vert x\right) }{%
\left\vert \sqrt{\lambda }\right\vert },\text{ }\left\vert \cos (\sqrt{%
\lambda }x)\right\vert \leq \exp \left( \left\vert \text{Im}\sqrt{\lambda }%
\right\vert x\right) .
\end{equation*}%
Thus for $\lambda \in
\mathbb{C}
$ and $\left\vert \lambda \right\vert $ being large enough$,$ one has
\begin{eqnarray*}
\left\vert S_{p}(x,\lambda )\right\vert &\leq &\frac{\exp \left( \left\vert
\text{Im}\sqrt{\lambda }\right\vert x\right) }{\left\vert \sqrt{\lambda }%
\right\vert ^{m+4}}\frac{\left( \int_{0}^{x}\left\vert q(t)\right\vert
\mathrm{d}t\right) ^{p}}{p!},\text{ }p\geq m+3, \\
\left\vert C_{p}(x,\lambda )\right\vert &\leq &\frac{\exp \left( \left\vert
\text{Im}\sqrt{\lambda }\right\vert x\right) }{\left\vert \sqrt{\lambda }%
\right\vert ^{m+3}}\frac{\left( \int_{0}^{x}\left\vert q(t)\right\vert
\mathrm{d}t\right) ^{p}}{p!},\text{ }p\geq m+3.
\end{eqnarray*}%
This directly yields that as $\left\vert \lambda \right\vert \rightarrow
\infty ,$
\begin{equation*}
\sum\limits_{p=m+3}^{\infty }S_{p}(x,\lambda )=O\left( \frac{\exp \left(
\left\vert \text{Im}\sqrt{\lambda }\right\vert x\right) }{\left\vert \sqrt{%
\lambda }\right\vert ^{m+4}}\right) ,\text{ }\sum\limits_{p=m+3}^{\infty
}C_{p}(x,\lambda )=O\left( \frac{\exp \left( \left\vert \text{Im}\sqrt{%
\lambda }\right\vert x\right) }{\left\vert \sqrt{\lambda }\right\vert ^{m+3}}%
\right) .
\end{equation*}%
Similarly, one can also obtain that$\ \sum\limits_{p=m+3}^{\infty
}C_{p}^{\prime }(x,\lambda )=O\left( \frac{\exp \left( \left\vert \text{Im}%
\sqrt{\lambda }\right\vert x\right) }{\left\vert \sqrt{\lambda }\right\vert
^{m+2}}\right) $ as $\left\vert \lambda \right\vert \rightarrow \infty .$
\end{remark}

Now we turn to prove Lemma \ref{ooo}.

\begin{proof}[Proof of Lemma \protect\ref{ooo}]
We only aim to prove the relation $\left( \ref{4}\right) ,$ since the other
statements can be treated similarly. We first denote%
\begin{equation*}
g\left( x\right) :=\left\{
\begin{array}{l}
q\left( x\right) ,\text{ }x\in \left[ 0,x_{0}\right] , \\
s\left( x\right) ,\text{ }x\in \left( x_{0},x_{0}+\delta \right] ,%
\end{array}%
\right. \text{ }\widetilde{g}\left( x\right) :=\left\{
\begin{array}{l}
\widetilde{q}\left( x\right) ,\text{ }x\in \left[ 0,x_{0}\right] , \\
s\left( x\right) ,\text{ }x\in \left( x_{0},x_{0}+\delta \right] ,%
\end{array}%
\right.
\end{equation*}%
where $s\left( x\right) =\sum\limits_{j=0}^{m}q_{-}^{(j)}(x_{0})\left(
x-x_{0}\right) ^{j}$ and $\delta $ is some positive constant$.$ Then by $%
\left( \ref{qq}\right) $ it is easy to see that $g,$ $\widetilde{g}$ $\in $ $%
C^{m}\left[ 0,x_{0}+\delta \right] \ $and%
\begin{equation}
g^{\left( j\right) }(x_{0})=q_{-}^{(j)}(x_{0})=\widetilde{g}^{\left(
j\right) }(x_{0})\text{ for }j=0,1,\ldots ,m.  \label{eeeee}
\end{equation}%
For $i=1,2,$ let $w_{2}(x,\lambda )$ and $\widetilde{w}_{2}(x,\lambda )$ be
the fundamental solutions of the equations%
\begin{equation*}
-y^{\prime \prime }+g\left( x\right) y=\lambda y\text{\ and}-y^{\prime
\prime }+\widetilde{g}\left( x\right) y=\lambda y,\text{ }x\in \left(
0,x_{0}+\delta \right)
\end{equation*}%
respectively, where $w_{2}(x,\lambda )$ and $\widetilde{w}_{2}(x,\lambda )$
are determined by the initial conditions%
\begin{equation*}
w_{2}(0,\lambda )=\text{ }\widetilde{w}_{2}(0,\lambda )=0,\text{ }%
w_{2}^{\prime }(0,\lambda )=\widetilde{w}_{2}^{\prime }(0,\lambda )=1.
\end{equation*}%
By $\left( \ref{y}\right) ,$ $\left( \ref{y2'}\right) $, Lemma \ref{y2},
Lemma \ref{yy2}, Remark \ref{yy2 copy(1)} and Remark \ref{yy2 copy(3)}$,$ it
is easy to see that there exist functions $r_{k},u_{k},z_{k}\in C^{1}\left[
0,x_{0}+\delta \right] $ such that for $x\in \left[ 0,x_{0}+\delta \right] ,$
\begin{eqnarray}
&&w_{2}(x,\lambda )\widetilde{w}_{2}^{\prime }(x,\lambda )-w_{2}^{\prime
}(x,\lambda )\widetilde{w}_{2}(x,\lambda )  \notag \\
&=&\sum\limits_{k=0}^{m+3}r_{k}(x)\frac{\sin \left( \sqrt{\lambda }x\right)
\cos \left( \sqrt{\lambda }x\right) }{\left( \sqrt{\lambda }\right) ^{k}}%
+\sum\limits_{k=0}^{m+3}u_{k}(x)\frac{\cos ^{2}\left( \sqrt{\lambda }%
x\right) }{\left( \sqrt{\lambda }\right) ^{k}}  \notag \\
&&+\sum\limits_{k=0}^{m+3}z_{k}(x)\frac{\sin ^{2}\left( \sqrt{\lambda }%
x\right) }{\left( \sqrt{\lambda }\right) ^{k}}+\frac{\left( \pm \right)
_{m+2}}{\sqrt{\lambda }}I_{1}\left( x,\lambda \right) +I_{2}\left( x,\lambda
\right)  \notag \\
&=&\sum\limits_{k=0}^{m+3}\frac{r_{k}(x)}{2}\frac{\sin \left( 2\sqrt{\lambda
}x\right) }{\left( \sqrt{\lambda }\right) ^{k}}+\sum\limits_{k=0}^{m+3}\frac{%
u_{k}(x)-z_{k}(x)}{2}\frac{\cos \left( 2\sqrt{\lambda }x\right) }{\left(
\sqrt{\lambda }\right) ^{k}}  \label{yy'} \\
&&+\sum\limits_{k=0}^{m+3}\frac{u_{k}(x)+z_{k}(x)}{2\left( \sqrt{\lambda }%
\right) ^{k}}+\frac{\left( \pm \right) _{m+2}}{\sqrt{\lambda }}I_{1}\left(
x,\lambda \right) +I_{2}\left( x,\lambda \right) ,  \notag
\end{eqnarray}%
where%
\begin{eqnarray}
I_{1}\left( x,\lambda \right) &=&\frac{\sin \left( \sqrt{\lambda }x\right) }{%
\sqrt{\lambda }}\int_{0}^{x}\frac{d\nu _{m+1}(x-2t,\lambda )}{dx}\left(
\widetilde{g}^{\left( m\right) }(t)-g^{\left( m\right) }(t)\right) dt
\label{definition of I1} \\
&&+\cos \left( \sqrt{\lambda }x\right) \int_{0}^{x}\nu _{m+1}(x-2t,\lambda
)\left( g^{\left( m\right) }(t)-\widetilde{g}^{\left( m\right) }(t)\right) dt
\notag \\
&=&\left\{
\begin{array}{c}
\int_{0}^{x}\frac{\cos \left( 2\sqrt{\lambda }t\right) }{\left( 2\sqrt{%
\lambda }\right) ^{m+1}}\left( g^{\left( m\right) }(t)-\widetilde{g}^{\left(
m\right) }(t)\right) dt\text{ if }m\ \text{is even}, \\
\int_{0}^{x}\frac{\sin \left( 2\sqrt{\lambda }t\right) }{\left( 2\sqrt{%
\lambda }\right) ^{m+1}}\left( \widetilde{g}^{\left( m\right) }(t)-g^{\left(
m\right) }(t)\right) dt\text{ if }m\ \text{is odd},%
\end{array}%
\text{ }\right.  \notag
\end{eqnarray}%
and as $\left\vert \lambda \right\vert \rightarrow \infty ,$
\begin{equation*}
I_{2}\left( x,\lambda \right) =o\left( \frac{\exp \left( 2\left\vert \text{Im%
}\sqrt{\lambda }\right\vert x\right) }{\left\vert \sqrt{\lambda }\right\vert
^{m+3}}\right) ,\text{ }I_{2}^{\prime }\left( x,\lambda \right) =o\left(
\frac{\exp \left( 2\left\vert \text{Im}\sqrt{\lambda }\right\vert x\right) }{%
\left\vert \sqrt{\lambda }\right\vert ^{m+2}}\right) .
\end{equation*}%
In view of $\left( \ref{yy'}\right) $ and the fact $g=\widetilde{g}$ on $%
\left[ x_{0},x_{0}+\delta \right] ,$ one deduces that for $x\in \left[
x_{0},x_{0}+\delta \right] ,$
\begin{eqnarray*}
&&\left( w_{2}(x,\lambda )\widetilde{w}_{2}^{\prime }(x,\lambda
)-w_{2}^{\prime }(x,\lambda )\widetilde{w}_{2}(x,\lambda )\right) ^{\prime }
\\
&=&r_{0}(x)\sqrt{\lambda }\cos \left( 2\sqrt{\lambda }x\right) -\left(
u_{0}(x)-z_{0}(x)\right) \sqrt{\lambda }\sin \left( 2\sqrt{\lambda }x\right)
\\
&&+\sum\limits_{k=0}^{m+2}\frac{u_{k}^{\prime }(x)+z_{k}^{\prime }(x)}{%
2\left( \sqrt{\lambda }\right) ^{k}}+\sum\limits_{k=0}^{m+2}\left(
r_{k+1}(x)+\frac{u_{k}^{\prime }(x)-z_{k}^{\prime }(x)}{2}\right) \frac{\cos
\left( 2\sqrt{\lambda }x\right) }{\left( \sqrt{\lambda }\right) ^{k}} \\
&&-\sum\limits_{k=0}^{m+2}\left( u_{k+1}(x)-z_{k+1}(x)-\frac{r_{k}^{\prime
}(x)}{2}\right) \frac{\sin \left( 2\sqrt{\lambda }x\right) }{\left( \sqrt{%
\lambda }\right) ^{k}}+o\left( \frac{\exp \left( 2\left\vert \text{Im}\sqrt{%
\lambda }\right\vert x\right) }{\left\vert \sqrt{\lambda }\right\vert ^{m+2}}%
\right) \\
&=&0.
\end{eqnarray*}%
This forces that for $x\in \left[ x_{0},x_{0}+\delta \right] ,$ $%
u_{0}(x)-z_{0}(x)=r_{0}(x)\equiv 0,$%
\begin{equation*}
r_{k+1}(x)+\frac{u_{k}^{\prime }(x)-z_{k}^{\prime }(x)}{2}=0,\text{ }%
u_{k+1}(x)-z_{k+1}(x)-\frac{r_{k}^{\prime }(x)}{2}=0\ \text{for }k=0,\ldots
,m+2,
\end{equation*}%
and thus for $k=0,1,\ldots ,m+3$ and $x\in \left[ x_{0},x_{0}+\delta \right]
,$ one has
\begin{equation}
u_{k}(x)-z_{k}(x)=r_{k}(x)\equiv 0.  \label{equal}
\end{equation}%
Therefore, by $\left( \ref{yy'}\right) $ and $\left( \ref{equal}\right) $ we
infer that
\begin{eqnarray}
&&w_{2}(x_{0},\lambda )\widetilde{w}_{2}^{\prime }(x_{0},\lambda
)-w_{2}^{\prime }(x_{0},\lambda )\widetilde{w}_{2}(x_{0},\lambda )
\label{step1} \\
&=&\sum\limits_{k=0}^{m+3}\frac{u_{k}(x_{0})+z_{k}(x_{0})}{2\left( \sqrt{%
\lambda }\right) ^{k}}+\frac{\left( \pm \right) _{m+2}}{\sqrt{\lambda }}%
I_{1}\left( x_{0},\lambda \right) +o\left( \frac{\exp \left( 2\left\vert
\text{Im}\sqrt{\lambda }\right\vert x_{0}\right) }{\left\vert \sqrt{\lambda }%
\right\vert ^{m+3}}\right) .  \notag
\end{eqnarray}

Next, we aim to show that%
\begin{equation}
I_{1}\left( x_{0},\lambda \right) =o\left( \frac{\exp \left( 2\left\vert
\text{Im}\sqrt{\lambda }\right\vert x_{0}\right) }{\left\vert \sqrt{\lambda }%
\right\vert ^{m+2}}\right)   \label{I1}
\end{equation}%
as $\left\vert \lambda \right\vert \rightarrow \infty $ in the sector $%
\Lambda _{\zeta }.$ Due to the definition $\left( \ref{definition of I1}%
\right) $ of $I_{1}\left( x,\lambda \right) ,$ it is sufficient to prove
\begin{eqnarray}
\int_{0}^{x_{0}}\cos \left( 2\sqrt{\lambda }t\right) \left( g^{\left(
m\right) }(t)-\widetilde{g}^{\left( m\right) }(t)\right) dt &=&o\left( \frac{%
\exp \left( 2\left\vert \text{Im}\sqrt{\lambda }\right\vert x_{0}\right) }{%
\left\vert \sqrt{\lambda }\right\vert }\right) ,  \label{cos} \\
\int_{0}^{x_{0}}\sin \left( 2\sqrt{\lambda }t\right) \left( g^{\left(
m\right) }(t)-\widetilde{g}^{\left( m\right) }(t)\right) dt &=&o\left( \frac{%
\exp \left( 2\left\vert \text{Im}\sqrt{\lambda }\right\vert x_{0}\right) }{%
\left\vert \sqrt{\lambda }\right\vert }\right) .  \label{sin}
\end{eqnarray}%
In fact, by $\left( \ref{eeeee}\right) $ $\left( \text{for }j=m\right) $ and
the fact $g,$ $\widetilde{g}$ $\in $ $C^{m}\left[ 0,x_{0}+\delta \right] $
we infer that given any $\epsilon >0,$ there exists a sufficiently small
constant $\delta _{0}>0$ such that%
\begin{equation*}
\left\vert g^{\left( m\right) }(t)-\widetilde{g}^{\left( m\right)
}(t)\right\vert <\epsilon \text{ on }\left[ x_{0}-\delta _{0},x_{0}\right] ,
\end{equation*}%
and thus for $\lambda \in \Lambda _{\zeta }$ and $\left\vert \lambda
\right\vert $ being sufficiently large, we obtain%
\begin{eqnarray*}
&&\left\vert \int_{0}^{x_{0}}\cos \left( 2\sqrt{\lambda }t\right) \left(
g^{\left( m\right) }(t)-\widetilde{g}^{\left( m\right) }(t)\right)
dt\right\vert  \\
&\leq &\left\vert \int_{0}^{x_{0}-\delta _{0}}\cos \left( 2\sqrt{\lambda }%
t\right) \left( g^{\left( m\right) }(t)-\widetilde{g}^{\left( m\right)
}(t)\right) dt\right\vert +\left\vert \int_{x_{0}-\delta _{0}}^{x_{0}}\cos
\left( 2\sqrt{\lambda }t\right) \left( g^{\left( m\right) }(t)-\widetilde{g}%
^{\left( m\right) }(t)\right) dt\right\vert  \\
&\leq &\max_{t\in \left[ 0,x_{0}\right] }\left\vert g^{\left( m\right) }(t)-%
\widetilde{g}^{\left( m\right) }(t)\right\vert \int_{0}^{x_{0}-\delta
_{0}}\left\vert \cos \left( 2\sqrt{\lambda }t\right) \right\vert dt+\epsilon
\int_{x_{0}-\delta _{0}}^{x_{0}}\left\vert \cos \left( 2\sqrt{\lambda }%
t\right) \right\vert dt \\
&\leq &\max_{t\in \left[ 0,x_{0}\right] }\left\vert g^{\left( m\right) }(t)-%
\widetilde{g}^{\left( m\right) }(t)\right\vert \int_{0}^{x_{0}-\delta
_{0}}\exp \left( 2\left\vert \text{Im}\sqrt{\lambda }\right\vert t\right)
dt+\epsilon \int_{x_{0}-\delta _{0}}^{x_{0}}\exp \left( 2\left\vert \text{Im}%
\sqrt{\lambda }\right\vert t\right) dt \\
&\leq &\max_{t\in \left[ 0,x_{0}\right] }\left\vert g^{\left( m\right) }(t)-%
\widetilde{g}^{\left( m\right) }(t)\right\vert \frac{\exp \left( 2\left\vert
\text{Im}\sqrt{\lambda }\right\vert \left( x_{0}-\delta _{0}\right) \right)
}{2\left\vert \text{Im}\sqrt{\lambda }\right\vert }+\epsilon \frac{\exp
\left( 2\left\vert \text{Im}\sqrt{\lambda }\right\vert x_{0}\right) }{%
2\left\vert \text{Im}\sqrt{\lambda }\right\vert } \\
&\leq &\frac{\max\limits_{t\in \left[ 0,x_{0}\right] }\left\vert g^{\left(
m\right) }(t)-\widetilde{g}^{\left( m\right) }(t)\right\vert }{2\exp \left(
2\left\vert \text{Im}\sqrt{\lambda }\right\vert \delta _{0}\right) }\frac{%
\exp \left( 2\left\vert \text{Im}\sqrt{\lambda }\right\vert x_{0}\right) }{%
\left\vert \text{Im}\sqrt{\lambda }\right\vert }+\frac{\epsilon }{2\sin
\frac{\zeta }{2}}\frac{\exp \left( 2\left\vert \text{Im}\sqrt{\lambda }%
\right\vert x_{0}\right) }{\left\vert \sqrt{\lambda }\right\vert } \\
&\leq &\frac{\epsilon }{\sin \frac{\zeta }{2}}\frac{\exp \left( 2\left\vert
\text{Im}\sqrt{\lambda }\right\vert x_{0}\right) }{\left\vert \sqrt{\lambda }%
\right\vert },
\end{eqnarray*}%
where we have used the inequalities
\begin{equation*}
\left\vert \cos \left( 2\sqrt{\lambda }t\right) \right\vert \leq \exp \left(
2\left\vert \text{Im}\sqrt{\lambda }\right\vert \left\vert t\right\vert
\right) \text{ for }\lambda \in
\mathbb{C}
,t\in
\mathbb{R}
\end{equation*}%
and%
\begin{equation}
\left\vert \text{Im}\sqrt{\lambda }\right\vert \geq \left\vert \sqrt{\lambda
}\right\vert \sin \frac{\zeta }{2}\text{ for }\lambda \in \Lambda _{\zeta }.
\label{inequaltiy}
\end{equation}%
This proves the equality $\left( \ref{cos}\right) .$ Note that $\left( \ref%
{sin}\right) $ can be treated similarly, and thus $\left( \ref{I1}\right) $
is proved.

Now by $\left( \ref{step1}\right) $ and $\left( \ref{I1}\right) $ we have
that%
\begin{eqnarray*}
&&w_{2}(x_{0},\lambda )\widetilde{w}_{2}^{\prime }(x_{0},\lambda
)-w_{2}^{\prime }(x_{0},\lambda )\widetilde{w}_{2}(x_{0},\lambda ) \\
&=&\sum\limits_{k=0}^{m+3}\frac{u_{k}(x_{0})+z_{k}(x_{0})}{2\left( \sqrt{%
\lambda }\right) ^{k}}+o\left( \frac{\exp \left( 2\left\vert \text{Im}\sqrt{%
\lambda }\right\vert x_{0}\right) }{\left\vert \sqrt{\lambda }\right\vert
^{m+3}}\right)
\end{eqnarray*}%
as $\left\vert \lambda \right\vert \rightarrow \infty $ in the sector $%
\Lambda _{\zeta }.$ This together with $\left( \ref{inequaltiy}\right) $
directly yields that
\begin{equation*}
w_{2}(x_{0},\lambda )\widetilde{w}_{2}^{\prime }(x_{0},\lambda
)-w_{2}^{\prime }(x_{0},\lambda )\widetilde{w}_{2}(x_{0},\lambda )=o\left(
\frac{\exp \left( 2\left\vert \text{Im}\sqrt{\lambda }\right\vert
x_{0}\right) }{\left\vert \sqrt{\lambda }\right\vert ^{m+3}}\right)
\end{equation*}%
as $\left\vert \lambda \right\vert \rightarrow \infty $ in the sector $%
\Lambda _{\zeta }.$ Now $\left( \ref{4}\right) $ is\ proved, since by the
definition of $g\ $and $\widetilde{g}$ we can infer that
\begin{eqnarray*}
w_{2}(x_{0},\lambda ) &=&y_{2}(x_{0},\lambda ),\text{ }\widetilde{w}%
_{2}(x_{0},\lambda )=\widetilde{y}_{2}(x_{0},\lambda ), \\
w_{2}^{\prime }(x_{0},\lambda ) &=&y_{2}^{\prime }(x_{0},\lambda ),\text{ }%
\widetilde{w}_{2}^{\prime }(x_{0},\lambda )=\widetilde{y}_{2}^{\prime
}(x_{0},\lambda ).
\end{eqnarray*}
\end{proof}

Now we are in a position to prove Proposition \ref{ooo copy(1)}.

\begin{proof}[Proof of Proposition \protect\ref{ooo copy(1)}]
Note that%
\begin{eqnarray}
&&y_{2,r}(x_{0},\lambda )\widetilde{y}_{2,r}^{\prime }(x_{0},\lambda
)-y_{2,r}^{\prime }(x_{0},\lambda )\widetilde{y}_{2,r}(x_{0},\lambda )
\label{yyyy} \\
&=&\left[ y_{2,r}\left( x_{0}-\delta ,\lambda \right) y_{1,x_{0}-\delta
}(x_{0},\lambda )+y_{2,r}^{\prime }\left( x_{0}-\delta ,\lambda \right)
y_{2,x_{0}-\delta }(x_{0},\lambda )\right] \times  \notag \\
&&\left[ \widetilde{y}_{2,r}\left( x_{0}-\delta ,\lambda \right) \widetilde{y%
}_{1,x_{0}-\delta }^{\prime }(x_{0},\lambda )+\widetilde{y}_{2,r}^{\prime
}\left( x_{0}-\delta ,\lambda \right) \widetilde{y}_{2,x_{0}-\delta
}^{\prime }(x_{0},\lambda )\right]  \notag \\
&&-\left[ y_{2,r}\left( x_{0}-\delta ,\lambda \right) y_{1,x_{0}-\delta
}^{\prime }(x_{0},\lambda )+y_{2,r}^{\prime }\left( x_{0}-\delta ,\lambda
\right) y_{2,x_{0}-\delta }^{\prime }(x_{0},\lambda )\right] \times  \notag
\\
&&\left[ \widetilde{y}_{2,r}\left( x_{0}-\delta ,\lambda \right) \widetilde{y%
}_{1,x_{0}-\delta }(x_{0},\lambda )+\widetilde{y}_{2,r}^{\prime }\left(
x_{0}-\delta ,\lambda \right) \widetilde{y}_{2,x_{0}-\delta }(x_{0},\lambda )%
\right]  \notag \\
&=&B_{1}(\lambda )\left[ y_{1,x_{0}-\delta }(x_{0},\lambda )\widetilde{y}%
_{1,x_{0}-\delta }^{\prime }(x_{0},\lambda )-y_{1,x_{0}-\delta }^{\prime
}(x_{0},\lambda )\widetilde{y}_{1,x_{0}-\delta }(x_{0},\lambda )\right]
\notag \\
&&+B_{2}(\lambda )\left[ y_{1,x_{0}-\delta }(x_{0},\lambda )\widetilde{y}%
_{2,x_{0}-\delta }^{\prime }(x_{0},\lambda )-y_{1,x_{0}-\delta }^{\prime
}(x_{0},\lambda )\widetilde{y}_{2,x_{0}-\delta }(x_{0},\lambda )\right]
\notag \\
&&+B_{3}(\lambda )\left[ \widetilde{y}_{1,x_{0}-\delta }^{\prime
}(x_{0},\lambda )y_{2,x_{0}-\delta }(x_{0},\lambda )-\widetilde{y}%
_{1,x_{0}-\delta }(x_{0},\lambda )y_{2,x_{0}-\delta }^{\prime
}(x_{0},\lambda )\right]  \notag \\
&&+B_{4}(\lambda )\left[ y_{2,x_{0}-\delta }(x_{0},\lambda )\widetilde{y}%
_{2,x_{0}-\delta }^{\prime }(x_{0},\lambda )-y_{2,x_{0}-\delta }^{\prime
}(x_{0},\lambda )\widetilde{y}_{2,x_{0}-\delta }(x_{0},\lambda )\right]
\notag
\end{eqnarray}%
where
\begin{eqnarray*}
B_{1}(\lambda ) &=&y_{2,r}\left( x_{0}-\delta ,\lambda \right) \widetilde{y}%
_{2,r}\left( x_{0}-\delta ,\lambda \right) =O\left( \left\vert \lambda
^{-1}\right\vert \exp \left( 2\left\vert \text{Im}\sqrt{\lambda }\right\vert
\left( x_{0}-\delta -r\right) \right) \right) , \\
B_{2}(\lambda ) &=&y_{2,r}\left( x_{0}-\delta ,\lambda \right) \widetilde{y}%
_{2,r}^{\prime }\left( x_{0}-\delta ,\lambda \right) =O\left( \left\vert
\sqrt{\lambda }\right\vert ^{-1}\exp \left( 2\left\vert \text{Im}\sqrt{%
\lambda }\right\vert \left( x_{0}-\delta -r\right) \right) \right) , \\
B_{3}(\lambda ) &=&\widetilde{y}_{2,r}\left( x_{0}-\delta ,\lambda \right)
y_{2,r}^{\prime }\left( x_{0}-\delta ,\lambda \right) =O\left( \left\vert
\sqrt{\lambda }\right\vert ^{-1}\exp \left( 2\left\vert \text{Im}\sqrt{%
\lambda }\right\vert \left( x_{0}-\delta \right) -r\right) \right) , \\
B_{4}(\lambda ) &=&\widetilde{y}_{2,r}^{\prime }\left( x_{0}-\delta ,\lambda
\right) y_{2,r}^{\prime }\left( x_{0}-\delta ,\lambda \right) =O\left( \exp
\left( 2\left\vert \text{Im}\sqrt{\lambda }\right\vert \left( x_{0}-\delta
-r\right) \right) \right) .
\end{eqnarray*}%
The above asymptotics of $B_{1},$ $B_{2},$ $B_{3},$ $B_{4}$ can be obtained
from $\left( \ref{y2y2}\right) .$ Therefore, one can easily deduce from
Lemma \ref{ooo} that
\begin{eqnarray*}
y_{1,x_{0}-\delta }(x_{0},\lambda )\widetilde{y}_{1,x_{0}-\delta }^{\prime
}(x_{0},\lambda )-y_{1,x_{0}-\delta }^{\prime }(x_{0},\lambda )\widetilde{y}%
_{1,x_{0}-\delta }(x_{0},\lambda ) &=&o\left( \frac{\exp \left( 2\left\vert
\text{Im}\sqrt{\lambda }\right\vert \delta \right) }{\left\vert \sqrt{%
\lambda }\right\vert ^{m+1}}\right) , \\
y_{1,x_{0}-\delta }(x_{0},\lambda )\widetilde{y}_{2,x_{0}-\delta }^{\prime
}(x_{0},\lambda )-y_{1,x_{0}-\delta }^{\prime }(x_{0},\lambda )\widetilde{y}%
_{2,x_{0}-\delta }(x_{0},\lambda ) &=&o\left( \frac{\exp \left( 2\left\vert
\text{Im}\sqrt{\lambda }\right\vert \delta \right) }{\left\vert \sqrt{%
\lambda }\right\vert ^{m+2}}\right) , \\
\widetilde{y}_{1,x_{0}-\delta }^{\prime }(x_{0},\lambda )y_{2,x_{0}-\delta
}(x_{0},\lambda )-\widetilde{y}_{1,x_{0}-\delta }(x_{0},\lambda
)y_{2,x_{0}-\delta }^{\prime }(x_{0},\lambda ) &=&o\left( \frac{\exp \left(
2\left\vert \text{Im}\sqrt{\lambda }\right\vert \delta \right) }{\left\vert
\sqrt{\lambda }\right\vert ^{m+2}}\right) , \\
y_{2,x_{0}-\delta }(x_{0},\lambda )\widetilde{y}_{2,x_{0}-\delta }^{\prime
}(x_{0},\lambda )-y_{2,x_{0}-\delta }^{\prime }(x_{0},\lambda )\widetilde{y}%
_{2,x_{0}-\delta }(x_{0},\lambda ) &=&o\left( \frac{\exp \left( 2\left\vert
\text{Im}\sqrt{\lambda }\right\vert \delta \right) }{\left\vert \sqrt{%
\lambda }\right\vert ^{m+3}}\right)
\end{eqnarray*}%
as $\left\vert \lambda \right\vert \rightarrow \infty $ in $\Lambda _{\zeta
}.$ Thus the equality $\left( \ref{1114}\right) $ can be directly obtained
from $\left( \ref{yyyy}\right) .$ The statements $\left( \ref{1111}\right)
-\left( \ref{1113}\right) $ can be proved similarly.
\end{proof}

\begin{remark}
\label{ooo copy(4)}If $q\ $and $\widetilde{q}\ $are both assumed to be in $%
L_{%
\mathbb{C}
}^{1}\left[ 0,\pi \right] ,$ then one can easily find that relations $\left( %
\ref{1111}\right) -\left( \ref{1114}\right) $ still hold by taking $m=-1$.$\
$In fact, in this case, $y_{2,r}(x,\lambda )$ and $y_{2,r}^{\prime
}(x,\lambda )$ have the following asymptotic form \cite{book}:%
\begin{eqnarray}
&&  \label{y2y2} \\
&&y_{2,r}(x,\lambda )  \notag \\
&=&\frac{\sin (\sqrt{\lambda }\left( x-r\right) )}{\sqrt{\lambda }}-Q\left(
x\right) \frac{\cos (\sqrt{\lambda }\left( x-r\right) )}{2\lambda }+o\left(
\frac{\exp \left( \left\vert \text{Im}\sqrt{\lambda }\right\vert \left(
x-r\right) \right) }{\left\vert \lambda \right\vert }\right) ,  \notag \\
&&y_{2,r}^{\prime }(x,\lambda )  \notag \\
&=&\cos (\sqrt{\lambda }\left( x-r\right) )+Q\left( x\right) \frac{\sin (%
\sqrt{\lambda }\left( x-r\right) )}{2\sqrt{\lambda }}+o\left( \frac{\exp
\left( \left\vert \text{Im}\sqrt{\lambda }\right\vert \left( x-r\right)
\right) }{\left\vert \sqrt{\lambda }\right\vert }\right) ,  \notag
\end{eqnarray}%
where $Q\left( x\right) =\int_{r}^{x}q(t)dt.$ Therefore, it is easy to see
that%
\begin{eqnarray*}
&&y_{2,r}(x_{0},\lambda )\widetilde{y}_{2,r}^{\prime }(x_{0},\lambda
)-y_{2,r}^{\prime }(x_{0},\lambda )\widetilde{y}_{2,r}(x_{0},\lambda ) \\
&=&\frac{\int_{r}^{x_{0}}\left( \widetilde{q}(t)-q(t)\right) dt}{2\lambda }%
+o\left( \frac{\exp \left( 2\left\vert \text{Im}\sqrt{\lambda }\right\vert
\left( x_{0}-r\right) \right) }{\left\vert \lambda \right\vert }\right) .
\end{eqnarray*}%
This directly yields $\left( \ref{1114}\right) .$ $\left( \ref{1111}\right)
-\left( \ref{1113}\right) $ can be treated similarly.
\end{remark}

\end{document}